\title{On fixed-point, Krylov, and $2\times 2$ block\\preconditioners for nonsymmetric problems
  \thanks{\funding{BS is supported by the Department of Energy, National Nuclear Security Administration,
  under Award Number(s) DE-NA0002376. SR gratefully acknowledges support from the Natural
      Sciences and Engineering Research Council of Canada through the
      Discovery Grant program (RGPIN-05606-2015) and the Discovery
      Accelerator Supplement (RGPAS-478018-2015).}}}
\author{Ben S. Southworth\thanks{Department of Applied Mathematics,
    University of Colorado,
    U.S.A. (\url{ben.s.southworth@gmail.com}),
    \url{http://orcid.org/0000-0002-0283-4928}} \and Abdullah
  A. Sivas\thanks{Department of Applied Mathematics, University of
    Waterloo, Canada (\url{aasivas@uwaterloo.ca})} \and Sander
  Rhebergen\thanks{Department of Applied Mathematics, University of
    Waterloo, Canada (\url{srheberg@uwaterloo.ca}),
    \url{http://orcid.org/0000-0001-6036-0356}}}
\begin{document}
\allowdisplaybreaks

\maketitle
\begin{abstract}
The solution of matrices with $2\times 2$ block structure arises in
numerous areas of computational mathematics, such as PDE
discretizations based on mixed-finite element methods, constrained
optimization problems, or the implicit or steady state treatment of
any system of PDEs with multiple dependent variables. Often, these
systems are solved iteratively using Krylov methods and some form of
block preconditioner. Under the assumption that one diagonal block
is inverted exactly, this paper proves a direct equivalence between
convergence of $2\times2$ block preconditioned Krylov or fixed-point
iterations to a given tolerance, with convergence of the underlying
preconditioned Schur-complement problem. In particular, results
indicate that an effective Schur-complement preconditioner is a
necessary and sufficient condition for rapid convergence of
$2\times 2$ block-preconditioned GMRES, for arbitrary
relative-residual stopping tolerances. A number of corollaries and
related results give new insight into block preconditioning, such as
the fact that approximate block-LDU or symmetric block-triangular
preconditioners offer minimal reduction in iteration over
block-triangular preconditioners, despite the additional
computational cost. Theoretical results are verified numerically on
a nonsymmetric steady linearized Navier--Stokes discretization,
which also demonstrate that theory based on the assumption of an
exact inverse of one diagonal block extends well to the more
practical setting of inexact inverses.
\end{abstract}
\begin{keywords}
Krylov, GMRES, block preconditioning
\end{keywords}
\begin{AMS}
  65F08 \end{AMS}

\section{Introduction}

\subsection{Problem}

This paper considers block preconditioning and the corresponding
convergence of fixed-point and Krylov methods applied to nonsymmetric
systems of the form
\begin{equation}
  \label{eq:system}
  A\mathbf{x} = \mathbf{b},
  \qquad \mathbf{x},\mathbf{b} \in \mathbb{R}^{n \times n},
  \quad A \in \mathbb{R}^{n\times n},
\end{equation}
where the matrix $A$ has a $2\times 2$ block structure,
\begin{equation}
  \label{eq:mat}
  A =
  \begin{bmatrix}
    A_{11}  & A_{12}
    \\
    A_{21} & A_{22}
  \end{bmatrix}.
\end{equation}
Such systems arise in numerous areas, including mixed finite element
\cite{Benzi:11,cyr2012stabilization,Klawonn:1998hv,white2011block}, constraint
optimization problems
\cite{schoberl2007symmetric,Pearson:2011ha,dollar2010preconditioning},
and the solution of neutral particle transport \cite{19hetdsa}. More
generally, the discretization of just about any systems of PDEs with
multiple dependent variables can be expressed as a $2\times 2$ block
operator by the grouping of variables into two sets. Although
iterative methods for saddle-point problems, in which
$A_{22} = \mathbf{0}$, have seen extensive research, in this paper we
take a more general approach, making minimal assumptions on the
submatrices of $A$.

The primary contribution of this paper is to prove a direct
equivalence between the convergence of a block-preconditioned fixed-point
or Krylov iteration applied to \cref{eq:system}, with
convergence of a similar method applied directly to a preconditioned
Schur complement of $A$, {where the Schur complements of $A$ are
defined as $S_{11} := A_{11} - A_{12}A_{22}^{-1}A_{21}$ and
$S_{22} := A_{22}-A_{21}A_{11}^{-1}A_{12}$.}
In particular, results in this paper prove that a
good approximation to the Schur complement of the $2\times 2$ block
matrix \cref{eq:mat} is a necessary and sufficient condition for rapid
convergence of preconditioned GMRES applied to \cref{eq:system}, for
arbitrary relative residual stopping tolerances.

The main assumption in derivations here is that at least one of $A_{11}$
or $A_{22}$ is non-singular and that the action of its inverse can be
computed. Although in practice it is often not advantageous to
solve one diagonal block to numerical precision every iteration, it
\emph{is} typically the case that the inverse of at least one diagonal
block can be reliably computed using some form of iterative method,
such as multigrid.
The theory developed in this paper provides a guide for ensuring a
convergent and practical preconditioner for \cref{eq:system}. Once the
iteration and convergence are well understood, the time to solution
can be reduced by solving
the diagonal block(s) to some tolerance. Numerical results in
\cref{sec:results} demonstrate how ideas motivated by the theory,
where one block is inverted exactly, extend to inexact preconditioners.

\subsection{Previous work}
\label{sec:prev}

For nonsymmetric $2\times 2$ block operators, most theoretical results
in the literature are not necessarily indicative of practical
performance.  There is also a lack of distinction in the literature
between a Krylov convergence result and a fixed-point convergence
result, which we discuss in \cref{sec:2x2}.

Theoretical results on block preconditioning generally fall in to one
of two categories. First, are results based on the assumption that the
inverse action of the Schur complement is available, and/or results
that show an asymptotic equivalence between the preconditioned
$2\times 2$ operator and the preconditioned Schur complement. It is
shown in \cite{Ipsen:2001ui,Murphy:2000hja} that GMRES (or other
minimal residual methods) is guaranteed to converge in two or four
iterations for a block-triangular or block-diagonal preconditioned
system, respectively, when the diagonal blocks of the preconditioner
consist of a Schur complement and the respective complementary block
of $A$ ($A_{11}$ or $A_{22}$). However, computing the action of the
Schur complement inverse is generally very expensive. In
\cite{Bai:2005go}, it is shown that if the minimal polynomial of the
preconditioned Schur complement is degree $k$, then the minimal
polynomial of the preconditioned $2\times 2$ system is at most degree
$k+1$. Although this does not require the action and inverse of the
Schur complement, it is almost {never} the case that GMRES is iterated
until the true minimal polynomial is achieved. As a consequence, the
minimal polynomial equivalence also does not provide practical
information on convergence of the $2\times 2$ system, as demonstrated
in the following example.

\begin{example}
  \label{example:nonconvergenceGMRES}
  Define two matrices $A_1,A_2\in\mathbb{R}^{1000\times1000}$,
    \begin{equation*}
    A_1 :=
    \begin{bmatrix}
      I_{500} & \mathbf{0}
      \\
      \mathbf{0} & D_1
    \end{bmatrix},
    \hspace{5ex}
    A_2 :=
    \begin{bmatrix}
      I_{500} & \mathbf{0}
      \\
      \mathbf{0} & D_2
    \end{bmatrix},
  \end{equation*}
    where $D_1\in\mathbb{R}^{500\times500}$ is tridiagonal with stencil
  $[-1,2,-1]$ and $D_2\in\mathbb{R}^{500\times500}$ is tridiagonal
  with stencil $[-1,2.0025,-1]$. Note that the minimal polynomials of
  $A_1$ and $A_2$ in the $\ell^2$-norm have degree at most $k=501$.
    \Cref{fig:res_ex} shows results from applying GMRES with no restarts
  to $A_1$ and $A_2$, with right-hand side
  $\mathbf{b} = (1,2,...,1000)^T/1000$.
  Note that neither operator reaches exact convergence in the first
  500 iterations, indicating that the minimal polynomial in both cases
  is degree $k=501$. However, despite having the same degree minimal
  polynomial (which is less than the size of the matrix), at iteration
  250, $A_2$ has reached a residual of
  $\norm{\mathbf{r}} \approx 10^{-5}$, while $A_1$ still has residual
  $\norm{\mathbf{r}} > 1$.
  \begin{figure}[!h]
    \begin{subfigure}[t]{0.475\textwidth}
    \centering
    \includegraphics[height=1.75in]{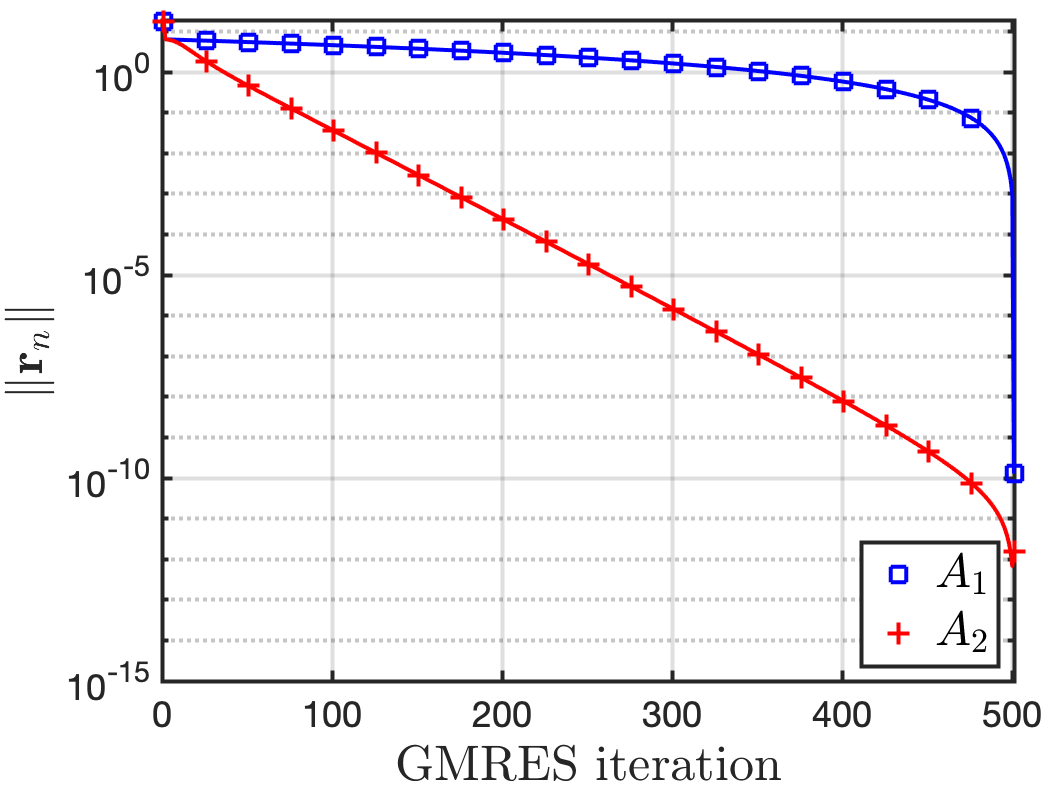}
    \caption{Minimal polynomial does not necessarily provide practical information on
    convergence of a $2\times 2$ system (\cref{example:nonconvergenceGMRES}).}
    \label{fig:res_ex}
    \end{subfigure}
    \begin{subfigure}[t]{0.475\textwidth}
    \centering
    \includegraphics[height=1.75in]{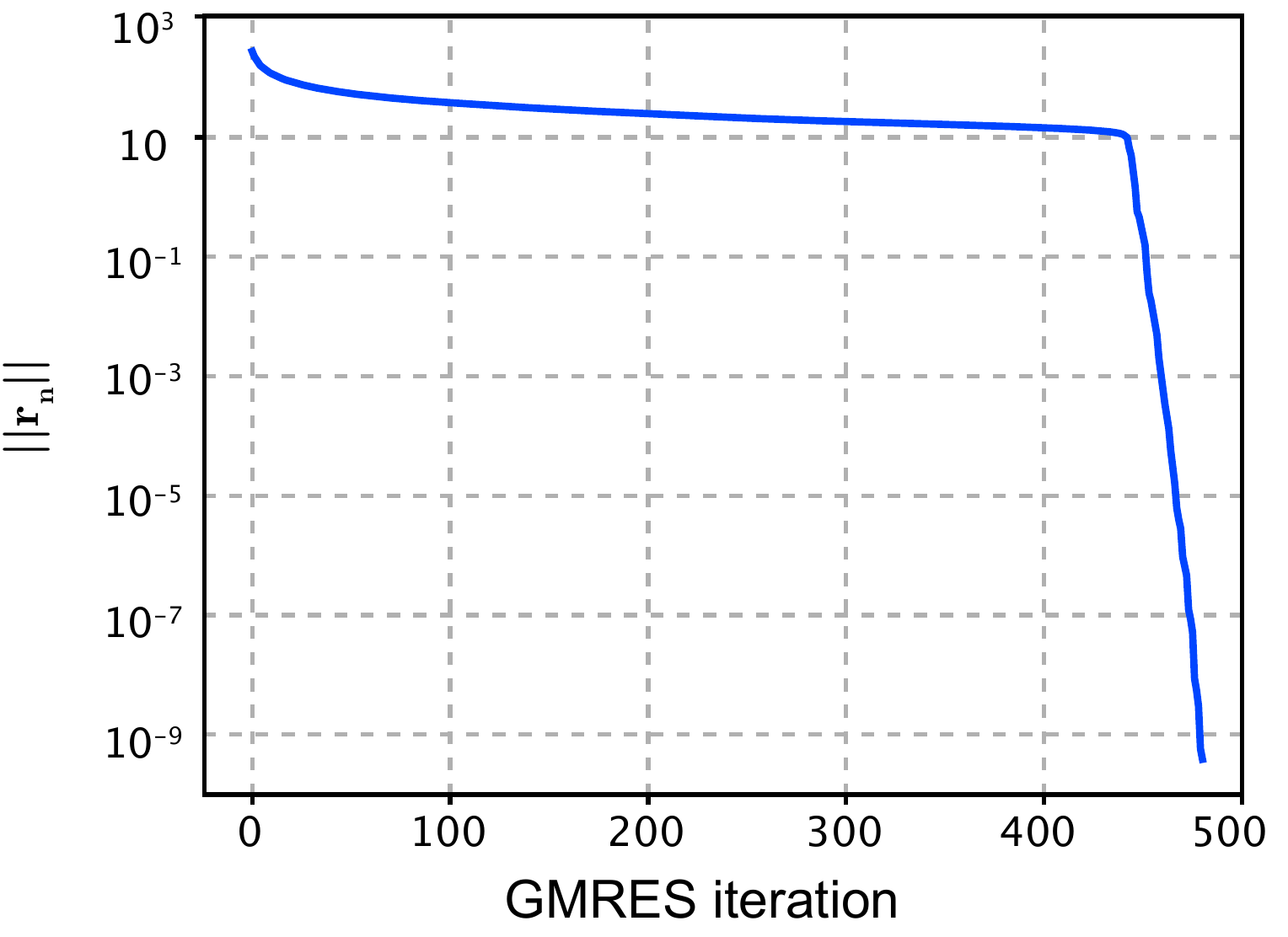}
    \caption{Eigenvalues do not necessarily provide practical information on
    convergence of a $2\times 2$ system (\cref{example:DG}).}
    \label{fig:res_dg}
    \end{subfigure}
  \end{figure}
  \end{example}

Second, many papers have used eigenvalue analyses in an attempt to 
provide more practical information on convergence. In the symmetric
setting, this has proven effective (see, for example,
\cite{Notay:2014kv}). Spectral analyses have also been done
for various nonsymmetric $2\times 2$ block matrices and
preconditioners \cite{Elman:1996kh,Klawonn:1998hv,
Bai:2006el,Bai:2005go,Krzyzanowski:2001dg,Siefert:2006fv} and
eigenvectors for preconditioned operators derived in \cite{Pestana:2014jj}.
However, eigenvalue analyses are asymptotic, guaranteeing eventual
convergence but, in the nonsymmetric setting, giving no guarantee
of practical performance. In certain cases, a nonsymmetric operator is symmetric 
in a non-standard inner product, and some of papers have looked at 
block preconditioning in modified norms 
\cite{Pestana:2012ji,Wathen:2007kp,Rees:2010kp,McDonald:2017wb}
that yield self-adjointness. 
Nevertheless, there are many nonsymmetric problems that are not easily
symmetrized and/or where eigenvalues provide little to no practical
information on convergence of iterative methods. The following
provides one such example in the discretization of differential
operators. A formal analysis as in
\cite{Greenbaum:1996cp,Tebbens:2013kv} proves that for any set
of eigenvalues, there is a matrix such that GMRES converges arbitrarily
slowly.

\begin{example}
  \label{example:DG}
  Consider an upwind discontinuous Galerkin (DG) discretization of
  linear advection with Dirichlet inflow boundaries \cite{Brezzi:2004hf}
  {and a velocity field $\mathbf{b}(x,y) := (\cos(\pi y)^2,\cos(\pi x)^2)$
  (see Figure  5a in \cite{AIR1}); more generally, similar results hold
  for any velocity field with no cycles).} In the appropriate ordering, the
  resulting matrix is block triangular, where ``block'' refers to the DG
  element blocks. Then, if we apply block-diagonal
  (Jacobi) preconditioning, the spectrum of the preconditioned
  operator is given by $\sigma(M^{-1}A) = \{1\}$, and the spectrum of
  the fixed-point iteration is given by $\sigma(I - M^{-1}A) = \{0\}$.
  Despite all zero eigenvalues, block-Jacobi preconditioned fixed-point
  or GMRES iterations on such a
  matrix can converge arbitrarily slowly, until the degree of
  nilpotency is reached and exact convergence is immediately obtained.
  {\Cref{fig:res_dg} shows convergence of DG block-Jacobi preconditioned
  GMRES applied to 2d linear transport, with $200\times 200$ finite
  elements. Convergence occurs very rapidly at around 450 iterations
  (without restart), approximately equal to the diameter of the mesh
  (as expected \cite{AIR1}).}
\end{example}

This is not the first work to recognize that eigenvalue analyses of
nonsymmetric block preconditioners may be of limited practical use.
Norm and field-of-values equivalence are known to provide more accurate
measures of convergence for nonsymmetric operators, used as early as
\cite{Manteuffel:1977ue}, and applied recently for specific problems
in \cite{Benzi:2011ku,Klawonn:1999is,Loghin:2004cp,ma2016robust}. 
Here, we  stay even more general, focusing directly on the relation
between polynomials of a general preconditioned $2\times 2$ system
and the preconditioned Schur complement.

\subsection{Overview of results}
\label{sec:intro:overview}

The paper proceeds as follows. \Cref{sec:2x2} formally introduces various
block preconditioners, considers the distinction between fixed-point and
Krylov methods, and derives some relationships on polynomials of the
preconditioned operators that define Krylov and fixed-point iterations.
Proofs and formal statements of results are
provided in \cref{sec:min}, and numerical results are examined in
\cref{sec:results}, with a discussion on the practical implications of
theory developed here. We conclude in \cref{sec:conclusions}.

Because the formal derivations are lengthy, the following list
provides a brief overview of theoretical contributions of this paper.
\begin{itemize}
\item Fixed-point and minimal residual Krylov iterations
  preconditioned with a $2\times 2$ block-triangular, block-Jacobi,
  or approximate block-LDU preconditioner converge to a given
  tolerance $C\rho$ after $n$ iterations if and only if an equivalent
  method applied to the underlying preconditioned Schur complement
  converges to tolerance $\rho$ after $n$ iterations, for constant $C$
  (\cref{sec:min}). {Such results do \textit{not} hold
  for general block-diagonal preconditioners} \cite{diag}.

\item A symmetric block-triangular or approximate block-LDU
  preconditioner offers little to no improvement in convergence over
  block-triangular preconditioners, when one diagonal block is
  inverted exactly (\cref{sec:2x2:fp:symm}). Numerical results
  demonstrate the same behaviour for inexact inverses, suggesting that
  symmetric block-triangular or block-LDU preconditioners are probably
  not worth the added computational cost in practice.

\item The worst-case number of iterations for a block-Jacobi
  preconditioner to converge to a given tolerance $\rho$ is twice the
  number of iterations for a block-triangular preconditioner to
  converge to $C\rho$, for some constant factor $C$
  (\cref{sec:min:diag}). Numerical results suggest that for non-saddle
  point problems (nonzero (2,2)-block), this double in iteration count is not due to the
  staircasing effect introduced in \cite{Fischer:1998vj} for
  saddle-point problems.

\item With an exact Schur-complement inverse, a \textit{fixed-point}
  iteration with a block-triangular preconditioner converges in two
  iterations, while a fixed-point iteration with a block-diagonal
  preconditioner does not converge (\cref{sec:2x2}).

\end{itemize}

\section{Block preconditioners}
\label{sec:2x2}

This paper considers $2\times 2$ block preconditioners, where one
diagonal block is inverted exactly, and the other is some
approximation to the Schur complement.

We consider four different kinds of block preconditioners: block
diagonal, block upper triangular, block lower triangular, and block
LDU, denoted $D$, $U$, $L$, and $M$, respectively. If the
preconditioners have no subscript, this implies the diagonal blocks of
the preconditioners are the diagonal blocks of $A$. If one of
the diagonal blocks is some approximation to the Schur complement
$S_{kk}$, $k\in\{1, 2\}$, then a $11$- or $22$- subscript denotes
in which block the approximation is used. For example, with a
Schur-complement approximation in the $(1,1)$-block, preconditioners
take the forms
\begin{align}
  \label{eq:L11U11D11M11}
  L_{11} &:=
           \begin{bmatrix}
             \widehat{S}_{11} & \mathbf{0}
             \\
             A_{21} & A_{22}
           \end{bmatrix}, 
& U_{11} &:=
           \begin{bmatrix}
             \widehat{S}_{11} & A_{12}
             \\
             \mathbf{0} & A_{22}
           \end{bmatrix}, \\ \nonumber
  D_{11} &:=
           \begin{bmatrix}
             \widehat{S}_{11} & \mathbf{0}
             \\
             \mathbf{0} & A_{22}
           \end{bmatrix},
& M_{11} &:=
           \begin{bmatrix}
             I & A_{12}A_{22}^{-1}
             \\
             \mathbf{0} & I
           \end{bmatrix}
           \begin{bmatrix}
             \widehat{S}_{11} & \mathbf{0}
             \\
             \mathbf{0} & A_{22}
           \end{bmatrix}
           \begin{bmatrix}
             I & \mathbf{0}
             \\
             A_{22}^{-1}A_{21} & I
           \end{bmatrix}.
\end{align}
The block-diagonal, block upper-triangular, and block lower-triangular
preconditioners with a Schur-complement approximation in the
$(2,2)$-block take an analogous form, with
$\widehat{S}_{11}\mapsto A_{11}$ and $A_{22}
\mapsto\widehat{S}_{22}$, and the approximate block LDU
preconditioner $M_{22}$ is given by
\begin{equation}
  \label{eq:M22}
  M_{22} := \begin{bmatrix} I & \mathbf{0} \\ A_{21}A_{11}^{-1} & I \end{bmatrix}
  	\begin{bmatrix} A_{11} & \mathbf{0} \\ \mathbf{0} & \widehat{S}_{22}\end{bmatrix}
	\begin{bmatrix} I & A_{11}^{-1}A_{12} \\ \mathbf{0} & I \end{bmatrix}.
\end{equation}
{Most results here regarding block-diagonal preconditiong are for the specific
case of block Jacobi, where $D_{11}=D_{22} = D$ is the block diagonal of $A$.}

Preconditioners are typically used in conjunction with either a
fixed-point iteration or Krylov subspace method to approximately
solve a linear system \cref{eq:system}. 
Krylov methods approximate the solution to linear systems
by constructing a Krylov space of vectors and
minimizing the error of the approximate solution over this space, in a
given norm. The Krylov space is formed as powers of the preconditioned
operator applied to the initial residual. For linear system
$A\mathbf{x} = \mathbf{b}$, (left) preconditioner $M^{-1}$, and initial
residual $\mathbf{r}_0$, the $d$th Krylov space takes the form
\begin{equation*}
  \mathcal{K}_d := \cbr{\mathbf{r}_0, M^{-1}A\mathbf{r}_0, ..., (M^{-1}A)^{d-1}\mathbf{r}_0 }.
\end{equation*}
Minimizing over this space is thus equivalent to constructing a
minimizing polynomial $p(M^{-1}A)\mathbf{r}_0$, which is optimal in a
given norm. This optimality can be in the operator norm (that is,
including a $\sup_{\mathbf{r}_0\neq\mathbf{0}}$) for a worst-case
convergence over all initial guesses and right-hand sides, or optimal
for a specific initial residual. Examples include CG, which
minimizes error in the $A$-norm, MINRES, which minimizes error in the
$AM^{-1}A$-norm \cite{Ashby:1990kr}, left-preconditioned GMRES,
which minimizes error in
the $(M^{-1}A)^*(M^{-1}A)$-norm, or right-preconditioned GMRES, which
minimizes error in the $A^*A$-norm. Note that error in the
$A^*A$-norm is equivalent to residual in the $\ell^2$-norm, which is
how minimal-residual methods are typically presented. Fixed-point
iterations also correspond to polynomials of the preconditioned
operator, but they are not necessarily optimal in a specific norm. 

Analysis in this paper is focused on polynomials of
block-preconditioned operators, particularly deriving upper and lower
bounds on minimizing Krylov polynomials of a fixed degree. \Cref{sec:2x2:fp}
begins by considering fixed-point iterations and the corresponding matrix
polynomials in the nonsymmetric setting, and discusses
important differences between the various preconditioners in
\cref{eq:L11U11D11M11} and \cref{eq:M22}. \Cref{sec:2x2:poly}
then examines general polynomials of the preconditioned operator,
developing the theoretical framework used in \cref{sec:min} to analyze
convergence of block-preconditioned Krylov methods. Due to the
equivalence of a Krylov method and a minimizing polynomial
of the preconditioned operator, we refer to, for example, GMRES and a
minimizing polynomial of $p(M^{-1}A)$ in the $\ell^2$-norm, interchangeably.

\subsection{Observations on fixed-point iterations}
\label{sec:2x2:fp}

For some approximate inverse $P$ to linear operator $A$, error
propagation of a fixed-point iteration takes the form
$\mathcal{E} := I - P^{-1}A$ and residual propagation takes the form
$\mathcal{R} := A\mathcal{E}^{-1}A = I - AP^{-1}$.  Define
\begin{align}
  \label{eq:schurFP}
  \begin{split}
    \mathcal{E}_{{11}} & := I - \widehat{S}_{11}^{-1}S_{11}, \hspace{5ex}
    \mathcal{R}_{{11}} := I -  S_{11} \widehat{S}_{11}^{-1}, \\
    \mathcal{E}_{{22}} & := I - \widehat{S}_{22}^{-1}S_{22}, \hspace{5ex}
    \mathcal{R}_{{22}} := I -  S_{22} \widehat{S}_{22}^{-1}.
  \end{split}
\end{align}
Consider first block-triangular and approximate block-LDU preconditioners. 
Powers of fixed-point error and residual propagation with these block
preconditioners take the following forms:
\begin{align}
  \label{eq:errorrespropops}
  \begin{split}   
&(I - L_{11}^{-1}A)^d = \begin{bmatrix} I \\ -A_{22}^{-1}A_{21} \end{bmatrix} \mathcal{E}_{{11}}^{d-1}
	\begin{bmatrix} I - \widehat{S}_{11}^{-1}A_{11} & -\widehat{S}_{11}^{-1}A_{12} \end{bmatrix},
\\
&(I - AL_{22}^{-1})^d = \begin{bmatrix} -A_{12}\widehat{S}_{22}^{-1} \\ I - A_{22}\widehat{S}_{22}^{-1}\end{bmatrix} \mathcal{R}_{{22}}^{d-1}
	\begin{bmatrix} -A_{21}A_{11}^{-1} & I \end{bmatrix},
\\
&(I - AU_{11}^{-1})^d = \begin{bmatrix} I - A_{11}\widehat{S}_{11}^{-1} \\ -A_{21}\widehat{S}_{11}^{-1} \end{bmatrix} 
	\mathcal{R}_{{11}}^{d-1}\begin{bmatrix} I  & -A_{12}{A}_{22}^{-1}\end{bmatrix},
\\	
&(I - U_{22}^{-1}A)^d = \begin{bmatrix} -A_{11}^{-1}A_{12} \\ I \end{bmatrix} \mathcal{E}_{{22}}^{d-1}
	\begin{bmatrix} -\widehat{S}_{22}^{-1}A_{21} & I - \widehat{S}_{22}^{-1}A_{22}\end{bmatrix},
\end{split}
\\
(I - AL_{11}^{-1})^d & = \begin{bmatrix} \mathcal{R}_{{11}}^{p} & -\mathcal{R}_{{11}}^{d-1}A_{12}A_{22}^{-1}  \\
  \mathbf{0} & \mathbf{0} \end{bmatrix},
\hspace{3ex}
(I - L_{22}^{-1}A)^d = \begin{bmatrix} \mathbf{0} & -A_{11}^{-1}A_{12}\mathcal{E}_{{22}}^{d-1} \\
  \mathbf{0} & \mathcal{E}_{{22}}^d \end{bmatrix}
\nonumber\\
(I - AU_{22}^{-1})^d & = \begin{bmatrix} \mathbf{0} & \mathbf{0} \\ -\mathcal{R}_{{22}}^{d-1}A_{21}A_{11}^{-1} &
  \mathcal{R}_{{22}}^d\end{bmatrix},
\hspace{3ex}
(I - U_{11}^{-1}A)^d =  \begin{bmatrix} \mathcal{E}_{{11}}^d &  \mathbf{0} \\
  -A_{22}^{-1}A_{21}  \mathcal{E}_{{11}}^{d-1}& \mathbf{0} \end{bmatrix}, 
\nonumber\\
(I - M_{11}^{-1}A)^d & = \begin{bmatrix} \mathcal{E}_{{11}}^d & \mathbf{0} \\
	-A_{22}^{-1}A_{21}\mathcal{E}_{{11}}^d & \mathbf{0}\end{bmatrix},
\hspace{7ex}
(I - AM_{11}^{-1})^d = \begin{bmatrix} \mathcal{R}_{{11}}^d & -\mathcal{R}_{{11}}^dA_{12}A_{22}^{-1} \\
	\mathbf{0} & \mathbf{0}\end{bmatrix},
\nonumber\\
(I - M_{22}^{-1}A)^d & = \begin{bmatrix} \mathbf{0} & -A_{11}^{-1}A_{12}\mathcal{E}_{{22}}^d \\
	\mathbf{0} & \mathcal{E}_{{22}}^d \end{bmatrix},
\hspace{7ex}
      (I - AM_{22}^{-1})^d = \begin{bmatrix} \mathbf{0} & \mathbf{0} \\ -\mathcal{R}_{{22}}^d A_{21}A_{11}^{-1} & \mathcal{R}_{{22}}^d  \end{bmatrix}.\nonumber
\end{align}

Let $\norm{\cdot}$ be a given norm on $A$ and $\norm{\cdot}_c$ be
a given norm on the Schur-complement problem.\footnote{In the case of
$\ell^p$-norms, $\norm{\cdot} =\norm{\cdot}_c$, but in general, such as
for matrix-induced norms, they may be different.} Note that any of the
above fixed-point iterations is convergent in $\norm{\cdot}$ for all initial
error or residual, if and only if the corresponding Schur-complement
fixed-point iteration in \cref{eq:schurFP} is convergent in
$\norm{\cdot}_c$. Moreover, it is well-known that for block-triangular
preconditioners with an exact Schur complement, minimal residual Krylov
methods converge in two iterations \cite{Ipsen:2001ui,Murphy:2000hja}.
However, convergence in two iterations actually follows from fixed-point
convergence rather than Krylov iterations. 

  \begin{proposition}[Block triangular-preconditioners with Schur complement]
   If $\widehat{{S}}_{kk} = {S}_{kk}$, for
   $k\in\{1,2\}$, then fixed-point iteration with a (left or right)
   block upper or block lower-triangular preconditioner converges in
   two iterations.
 \end{proposition}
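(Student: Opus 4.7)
The proof is essentially a direct substitution into the closed-form formulas for powers of the fixed-point error and residual propagation operators already displayed in \cref{eq:errorrespropops}. The plan is to observe that the hypothesis $\widehat{S}_{kk} = S_{kk}$ immediately collapses the Schur-complement propagators appearing in those formulas and then to read off the $d=2$ case in each of the eight instances (four preconditioners $L_{11}, L_{22}, U_{11}, U_{22}$, each applied on the left or on the right).

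First I would note that when $\widehat{S}_{11} = S_{11}$, the definitions in \cref{eq:schurFP} give
\begin{equation*}
\mathcal{E}_{11} = I - \widehat{S}_{11}^{-1} S_{11} = \mathbf{0}, \qquad \mathcal{R}_{11} = I - S_{11}\widehat{S}_{11}^{-1} = \mathbf{0},
\end{equation*}
and similarly $\mathcal{E}_{22} = \mathcal{R}_{22} = \mathbf{0}$ when $\widehat{S}_{22} = S_{22}$. Next I would substitute these identities directly into the eight expressions in \cref{eq:errorrespropops} with $d=2$. Every one of the block-triangular propagators contains a factor of $\mathcal{E}_{kk}^{d-1}$, $\mathcal{R}_{kk}^{d-1}$, $\mathcal{E}_{kk}^{d}$, or $\mathcal{R}_{kk}^{d}$; at $d=2$ these are either $\mathcal{E}_{kk}$, $\mathcal{R}_{kk}$, $\mathcal{E}_{kk}^{2}$, or $\mathcal{R}_{kk}^{2}$, each of which vanishes under the hypothesis. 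Hence every block of the propagation operator is $\mathbf{0}$, so the error (or residual) after two fixed-point steps is identically zero regardless of the initial iterate.

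The argument therefore amounts to a uniform appeal to the factorizations in \cref{eq:errorrespropops} combined with a single algebraic observation, and no step presents any serious difficulty. If anything, the only thing to be careful about is to enumerate all eight left/right $\times$ upper/lower $\times$ $(11)/(22)$ cases to confirm that in each expression the exponent on $\mathcal{E}_{kk}$ or $\mathcal{R}_{kk}$ is at least one (so that $d=2$ already suffices), rather than presenting only a representative case. With that enumeration the proposition follows immediately.
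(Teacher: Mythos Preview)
Your proposal is correct and follows essentially the same approach as the paper: the paper's proof is simply the one-line observation that the hypothesis $\widehat{S}_{kk}=S_{kk}$ forces all four quantities $\mathcal{E}_{11},\mathcal{R}_{11},\mathcal{E}_{22},\mathcal{R}_{22}$ in \cref{eq:schurFP} to vanish, from which the result is immediate via \cref{eq:errorrespropops}. Your version is a more explicit spelling-out of the same substitution, including the $d=2$ case-check, but there is no difference in method.
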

 \begin{proof}
   {The proof follows by noting that if $\widehat{{S}}_{kk} = {S}_{kk}$, for
   $k\in\{1,2\}$, then all terms defined in \eqref{eq:schurFP} are zero.}
 \end{proof}
 
Now consider block-diagonal preconditioners. Then,
\begin{align*}
I - D_{11}^{-1}A & = \begin{bmatrix} I - \widehat{S}_{11}^{-1}A_{11} & -\widehat{S}_{11}^{-1}A_{12} \\ -{A}_{22}^{-1}A_{21} &  \mathbf{0} \end{bmatrix} ,
	\hspace{3ex}
I - AD_{11}^{-1} = \begin{bmatrix} I - A_{11}\widehat{S}_{11}^{-1} & -A_{12}{A}_{22}^{-1} \\ -A_{21}\widehat{S}_{11}^{-1} & \mathbf{0} \end{bmatrix} , \\
I - D_{22}^{-1}A & = \begin{bmatrix} \mathbf{0} & -A_{11}^{-1}A_{12} \\ -\widehat{S}_{22}^{-1}A_{21} & I - \widehat{S}_{22}^{-1}A_{22} \end{bmatrix} ,
	\hspace{3.25ex}
I - AD_{22}^{-1} = \begin{bmatrix} \mathbf{0} & -A_{12}\widehat{S}_{22}^{-1} \\ -A_{21}A_{11}^{-1} & I - A_{22}\widehat{S}_{22}^{-1} \end{bmatrix}.
\end{align*}
If we simplify to block Jacobi (that is, $\widehat{S}_{kk} := A_{kk}$
for $k\in\{1,2\}$),
both diagonal blocks are zero, and a
closed form for powers of block-diagonal preconditioners can be
obtained for an arbitrary number of fixed-point iterations,
\begin{align}
  \begin{split}
    (I - D^{-1}A)^{2d} & = \begin{bmatrix} A_{11}^{-1}A_{12}A_{22}^{-1}A_{21} & \mathbf{0} \\ \mathbf{0}  & A_{22}^{-1}A_{21}A_{11}^{-1}A_{12} \end{bmatrix}^d,\\
    (I - AD^{-1})^{2d} & = \begin{bmatrix} A_{12}A_{22}^{-1}A_{21}A_{11}^{-1} & \mathbf{0} \\ \mathbf{0}  & A_{21}A_{11}^{-1}A_{12}A_{22}^{-1} \end{bmatrix}^d.
  \end{split}
  \label{eq:jac_p}
\end{align}
Noting that if $\widehat{S}_{11} := A_{11}$ and
$\widehat{S}_{22} := A_{22}$ in \cref{eq:schurFP}, then
\begin{align*}
(I - D^{-1}A)^{2d} = \begin{bmatrix} \mathcal{E}_{{11}} & \mathbf{0} \\ \mathbf{0}  & \mathcal{E}_{{22}} \end{bmatrix}^d,\qquad
(I - AD^{-1})^{2d} = \begin{bmatrix} \mathcal{R}_{{11}} & \mathbf{0} \\ \mathbf{0}  & \mathcal{R}_{{22}} \end{bmatrix}^d.
\end{align*}
It follows that block Jacobi converges if and only if block upper- and
lower-triangular preconditioners, with diagonal blocks given by
$A_{11}$ and $A_{22}$, both converge. Furthermore, the expected number
of iterations of block Jacobi to converge to a given tolerance are
approximately double that of the equivalent block-triangular
preconditioning, give or take some independent constant factor (e.g.,
$A_{11}^{-1}A_{12}$) from the fixed-point operators.  A
similar result is later shown for preconditioning Krylov methods with
block Jacobi (see \Cref{th:jacobi}).  This relation of twice as many
iterations for Jacobi/block-diagonal preconditioning has been noted or
observed a number of times, perhaps originally in
\cite{Fischer:1998vj} where MINRES/GMRES are proven to stall every
other iteration on saddle-point problems.

\begin{remark}[Non-convergent block-diagonal fixed-point]
  \label{rem:diag}
  As mentioned above, fixed-point iteration converges in two
  iterations for a block-triangular preconditioner if the Schur
  complement is inverted exactly. However, the same does not hold for
  block Jacobi. Let $D_{22}$ be a block diagonal preconditioner with
  $\widehat{S}_{22} := S_{22}$. In the case of a saddle-point matrix,
  say $B$, where $B_{22} = \mathbf{0}$,
    \begin{align*}
    (I - D_{22}^{-1}B)^{3d} & = \begin{bmatrix} \mathbf{0} & -B_{11}^{-1}B_{12} \\
      -S_{22}^{-1}B_{21} & I \end{bmatrix}^{3d}
                                     = (-1)^d \begin{bmatrix} -B_{11}^{-1}B_{12}S_{22}^{-1}B_{21} &
                                       \mathbf{0} \\ \mathbf{0} & I \end{bmatrix},
  \end{align*}
    where $S_{22} := -B_{21}B_{11}^{-1}B_{12}$. Here we see the
  interesting property that as we continue to iterate,
  error-propagation of block-diagonal preconditioning does not
  converge or diverge. In fact, $(I - D_{22}^{-1}B)^{3d}$ is actually
  a periodic point of period two under the matrix-valued mapping
  $(I - D_{22}^{-1}B)^3$, for $d\geq 1$. The general $2\times 2$ case
  is more complicated and does not appear to have such a
  property. However, expanding to up to four powers gave no indication
  that it would result in a convergent fixed-point iteration, as it
  does with GMRES acceleration \cite{Ipsen:2001ui}.
\end{remark}

{\color{black}
\begin{remark}[Non-optimal block-diagonal Krylov]
For $2\times 2$ systems with nonzero diagonal blocks, block-diagonal preconditioning of minimal-residual methods with an exact Schur complement does \textit{not} necessarily
converge in a fixed number of iterations, in contrast to block-triangular preconditioners
or block-diagonal preconditioners for matrices with a zero (2,2) block (for example, see
\cite{Axelsoon2006,Silvester1994} for the symmetric case, and \cite{diag} for a complete
eigenvalue decomposition of general block-diagonal preconditioned operators, with
preconditioning based on either the diagonal blocks or an exact Schur complement). 
\end{remark}
}

\subsubsection{Symmetric block-triangular preconditioners}
\label{sec:2x2:fp:symm}

The benefit of Jacobi or block-diagonal preconditioning for SPD matrices
is that they are also SPD, which permits the use of
three-term recursion relations like conjugate gradient (CG) and MINRES,
whereas block upper- or lower-triangular preconditioners are not applicable.
Approximate block-LDU preconditioners
offer one symmetric option. Another option that might be considered,
particularly by those that work in iterative or multigrid methods, is
a symmetric triangular iteration, consisting of a block-upper triangular
iteration followed by a block-lower triangular iteration (or vice versa),
akin to a symmetric (block) Gauss--Seidel sweep. Interestingly, this
does not appear to be an effective choice.
Consider a symmetric block-triangular preconditioner with
approximate Schur complement in the (2,2)-block. The preconditioner
can take two forms, depending on whether the lower or upper iteration
is done first. For example,
\begin{align*}
(I - L_{22}^{-1}A)(I - U_{22}^{-1}A) & = I - L_{22}^{-1}(L_{22} + U_{22} + A)U_{22}^{-1} A, \\
(I - U_{22}^{-1}A)(I - L_{22}^{-1}A) & = I - U_{22}^{-1}(L_{22} + U_{22} + A)L_{22}^{-1} A.
\end{align*}
Define
$\mathcal{H}^{-1}_{22} := L_{22}^{-1}(L_{22} + U_{22} + A)U_{22}^{-1}$
and
$\mathcal{G}^{-1}_{22} := U_{22}^{-1}(L_{22} + U_{22} +
A)L_{22}^{-1}$, corresponding to upper-lower and lower-upper,
symmetric preconditioners respectively.  Expanding in block form, we
see that preconditioners associated with a symmetric block-triangular
iteration are given by
\begin{align*}
\mathcal{H}^{-1}_{22} & := \begin{bmatrix} I & \mathbf{0} \\ -\widehat{S}_{22}^{-1}A_{21} & I \end{bmatrix} 
	\begin{bmatrix} A_{11}^{-1} & \mathbf{0} \\ \mathbf{0} & 2\widehat{S}_{22}^{-1} - \widehat{S}_{22}^{-1}A_{22}\widehat{S}_{22}^{-1} \end{bmatrix}
	\begin{bmatrix} I & -A_{12}\widehat{S}_{22}^{-1} \\ \mathbf{0} & I \end{bmatrix}, \\
\mathcal{G}^{-1}_{22} &:= \begin{bmatrix} I & -A_{11}^{-1}A_{12} \\ \mathbf{0} & I \end{bmatrix} \begin{bmatrix} A_{11}^{-1} & \mathbf{0} \\
	\mathbf{0} & 2\widehat{S}_{22}^{-1} - \widehat{S}_{22}^{-1}A_{22}\widehat{S}_{22}^{-1}\end{bmatrix}
	\begin{bmatrix} I & \mathbf{0} \\ -A_{21}A_{11}^{-1} & I \end{bmatrix}.
\end{align*}

Notice that each of these preconditioners can be expressed as a
certain block-LDU type preconditioner, however, it is not clear that
either would be as good as or a better preconditioner than block
LDU. In the simplest (and also fairly common) case that
$\widehat{S}_{22} = A_{22}$, then $\mathcal{H}_{22}^{-1}$ and
$\mathcal{G}_{22}^{-1}$ are exactly equivalent to the two variants of
block-LDU preconditioning in \cref{eq:L11U11D11M11} and \cref{eq:M22},
respectively, with diagonal blocks used to approximate the Schur
complement. As we will see in \cref{sss:equivBTBLDU}, this is
also formally equivalent to a block-triangular preconditioner. 

Adding an approximation to the Schur complement in the (2,2)-block,
$\mathcal{G}_{22}^{-1}$, is equivalent to block-LDU preconditioning
with Schur-complement approximation in the (2,2)-block, except that
now we approximate $S_{22}^{-1}$ with the operator
$2\widehat{S}_{22}^{-1} -
\widehat{S}_{22}^{-1}A_{22}\widehat{S}_{22}^{-1}$, as opposed to
$\widehat{S}_{22}^{-1}$ in block-LDU preconditioning \cref{eq:M22}.
It is not clear if such an approach would ever be beneficial over
standard LDU, although it is possible one can construct such a
problem. For $\widehat{S}_{22}\neq A_{22}$, it is even less clear that
$\mathcal{H}_{22}^{-1}$ would make a good or better preconditioner
compared with LDU or block triangular. Analogous things can be said
about Schur-complement approximations in the (1,1)-block.  Numerical
results in \cref{sec:results} confirm these observations, where
symmetric block-triangular preconditioners offer at best a marginal
reduction in total iteration count over block upper- or lower-
triangular preconditioners, and sometimes observe worse convergence,
at the expense of several additional (approximate) inverses.

\subsection{Krylov and polynomials of the preconditioned matrix}
\label{sec:2x2:poly}

This section begins by considering polynomials applied to the approximate
block-LDU and block-triangular preconditioned operators in
\cref{sss:blockLDU} and \cref{sss:blockT}, respectively (the block-diagonal
preconditioner is discussed in \cref{sec:min:diag}). These results are
used in \cref{sss:equivBTBLDU} to construct a norm in which
fixed-point or Krylov iterations applied to approximate block-LDU or
block-triangular preconditioned operators are equivalent to the preconditioned
Schur complement. \Cref{sec:2x2:poly:bound} uses this equivalence to
motivate the key tool used in proofs provided in \cref{sec:min}.

\subsubsection{Approximate block-LDU preconditioner}
\label{sss:blockLDU}

In this section we apply a polynomial to the block-LDU preconditioned
operator. For an approximate block-LDU preconditioner with approximate
Schur complement in the $(2,2)$-block,
\begin{equation}
  \label{eq:M22A}
  M_{22}^{-1}A = \begin{bmatrix} I & -A_{11}^{-1}A_{12} \\ \mathbf{0} & I \end{bmatrix}
  \begin{bmatrix} I& \mathbf{0} \\ \mathbf{0} & \widehat{S}_{22}^{-1}S_{22}\end{bmatrix}
  \begin{bmatrix} I & A_{11}^{-1}A_{12} \\ \mathbf{0} & I \end{bmatrix}
  := P_1\begin{bmatrix} I& \mathbf{0} \\ \mathbf{0} & \widehat{S}_{22}^{-1}S_{22}\end{bmatrix}P_1^{-1}.      
\end{equation}
The three-term formula reveals the change of basis matrix, $P_1$,
between the LDU-preconditioned operator and the Schur-complement
problem. This allows us to express polynomials $p$ of the preconditioned
operator as a change of basis applied to the polynomial
of the preconditioned Schur complement and the identity,
\begin{align}
\begin{split}\label{eq:LDU22}
p(M_{22}^{-1}A) & = \begin{bmatrix} I & -A_{11}^{-1}A_{12} \\ \mathbf{0} & I \end{bmatrix}
	\begin{bmatrix} p(I) & \mathbf{0} \\ \mathbf{0} & p(\widehat{S}_{22}^{-1}S_{22})\end{bmatrix}
	\begin{bmatrix} I & A_{11}^{-1}A_{12} \\ \mathbf{0} & I \end{bmatrix} \\
& = \begin{bmatrix} p(I) & A_{11}^{-1}A_{12} \left(p(I) - p(\widehat{S}_{22}^{-1}S_{22})\right) \\ \mathbf{0} & p(\widehat{S}_{22}^{-1}S_{22})\end{bmatrix}.
\end{split}
\end{align}
Using right preconditioning, the polynomial takes the form
\begin{align*}
p(AM_{22}^{-1}) & = \begin{bmatrix} I & \mathbf{0} \\ A_{21}A_{11}^{-1} & I \end{bmatrix}
	\begin{bmatrix} p(I) & \mathbf{0} \\ \mathbf{0} & p(S_{22}\widehat{S}_{22}^{-1}) \end{bmatrix}
	\begin{bmatrix} I & \mathbf{0} \\ -A_{21}A_{11}^{-1} & I \end{bmatrix}\\
& = \begin{bmatrix} p(I) & \mathbf{0} \\ \left(p(I) - p(S_{22}\widehat{S}_{22}^{-1})\right)A_{21}A_{11}^{-1} & p(S_{22}\widehat{S}_{22}^{-1})\end{bmatrix}.
\end{align*}
Similarly, polynomials of the left and right preconditioned operator
by a block LDU with approximate Schur complement in the $(1,1)$-block
take the form
\begin{align}
\begin{split}\label{eq:LDU11}
p(M_{11}^{-1}A) & = \begin{bmatrix} p(\widehat{S}_{11}^{-1}S_{11}) & \mathbf{0} \\
	A_{22}^{-1}A_{21}\left(p(I) - p(\widehat{S}_{11}^{-1}S_{11})\right)& p(I) \end{bmatrix}, \\
p(AM_{11}^{-1}) & = \begin{bmatrix} p(S_{11}\widehat{S}_{11}^{-1}) & \left(p(I) - p(S_{11}\widehat{S}_{11}^{-1})\right)A_{12}A_{22}^{-1} \\
	\mathbf{0} & p(I) \end{bmatrix}.
\end{split}
\end{align}

\subsubsection{Block-triangular preconditioner}
\label{sss:blockT}

We now consider polynomials of a block-triangular preconditioned
operator. Notice that error- and residual-propagation operators for
four of the block-triangular preconditioners in
\cref{eq:errorrespropops} take a convenient form, with two zero blocks
in the $2\times 2$ matrix. We focus on these operators in
particular, looking at the left and right preconditioned operators
\begin{align*}
U_{11}^{-1}A & =  \begin{bmatrix} \widehat{S}_{11}^{-1}S_{11} &  \mathbf{0} \\
	A_{22}^{-1}A_{21} & I  \end{bmatrix}, \hspace{5ex}
AL_{11}^{-1} = \begin{bmatrix} S_{11}\widehat{S}_{11}^{-1} & A_{12}A_{22}^{-1}  \\
	\mathbf{0} &I \end{bmatrix}, \\
L_{22}^{-1}A & = \begin{bmatrix} I & A_{11}^{-1}A_{12} \\
	\mathbf{0} & \widehat{S}_{22}^{-1}S_{22} \end{bmatrix}, \hspace{5ex}
AU_{22}^{-1} = \begin{bmatrix} I & \mathbf{0} \\ A_{21}A_{11}^{-1} &
	S_{22}\widehat{S}_{22}^{-1}\end{bmatrix}.
\end{align*}
These block triangular operators are easy to raise to powers; for
example,
\begin{align}\label{eq:tri11}
(U_{11}^{-1}A)^d & =  \begin{bmatrix} (\widehat{S}_{11}^{-1}S_{11})^d &  \mathbf{0} \\
	A_{22}^{-1}A_{21}\sum_{\ell=0}^{d-1} (\widehat{S}_{11}^{-1}S_{11})^\ell & I  \end{bmatrix},
\end{align}
with similar block structures for $(AL_{11}^{-1})^d$,
$(L_{22}^{-1}A)^d$, and $(AU_{22}^{-1})^d$.

Now consider some polynomial $p(t)$ of degree $d$ with coefficients
$\{\alpha_i\}$ applied to the preconditioned operator. Diagonal blocks
are given by the polynomial directly applied to the diagonal blocks,
in this case $p(\widehat{S}_{11}^{-1}S_{11})$ and $p(I)$. One
off-diagonal block will be zero and the other (for $p(U_{11}^{-1}A)$)
takes the form $A_{22}^{-1}A_{21}F$, where
\begin{equation}
  \label{eq:F0}
  F := \sum_{i=1}^d \alpha_i \sum_{\ell=0}^{i-1} (\widehat{S}_{11}^{-1}S_{11})^\ell.
\end{equation}
Assume that $p(t)$ is a consistent polynomial, $p(0) = 1$, as is the
case in Krylov or fixed-point iterations.  Then $\alpha_0 = 1$, and
\begin{align}
  \label{eq:F}
\begin{split}
F(I - \widehat{S}_{11}^{-1}S_{11}) & = \sum_{i=1}^d \alpha_i I - \sum_{i=1}^d \alpha_i(\widehat{S}_{11}^{-1}S_{11})^i \\
& = p(I) - I - (p(\widehat{S}_{11}^{-1}S_{11}) - I) \\
& = p(I) - p(\widehat{S}_{11}^{-1}S_{11}).
\end{split}
\end{align}
If $I - \widehat{S}_{11}^{-1}S_{11}$ is invertible, not uncommon in
practice as preconditioning often does not invert any particular
eigenmode exactly, then
{\small
\begin{align}\label{eq:pTri}
p(U_{11}^{-1}A) & = \begin{bmatrix} I - \widehat{S}_{11}^{-1}S_{11} & \mathbf{0} \\ \mathbf{0} & I \end{bmatrix} 
	\begin{bmatrix} p(\widehat{S}_{11}^{-1}S_{11}) &  \mathbf{0} \\ A_{22}^{-1}A_{21}\left(p(I) - p(\widehat{S}_{11}^{-1}S_{11})\right) & p(I)  \end{bmatrix}
	\begin{bmatrix} (I - \widehat{S}_{11}^{-1}S_{11})^{-1} & \mathbf{0} \\ \mathbf{0} & I \end{bmatrix}.
\end{align}}
Analogous derivations hold for other block-triangular preconditioners.

\subsubsection{Equivalence of block-triangular and LDU preconditioners}
\label{sss:equivBTBLDU}

Notice from \cref{eq:LDU11} that the middle term in \cref{eq:pTri}
exactly corresponds to $p(M_{11}^{-1}A)$. Applying similar techniques
to the other triangular preconditioners above yield the following
result on equivalence between consistent polynomials of approximate block-LDU
preconditioned and block-triangular preconditioned operators. In
particular, this applies to polynomials resulting from fixed-point or
Krylov iterations.

\begin{proposition}[Similarity of LDU and triangular preconditioning]
  \label{prop:sim}
  Let $p(t)$ be some consistent polynomial. Then
    \begin{align*}
    p(U_{11}^{-1}A)\begin{bmatrix} I - \widehat{S}_{11}^{-1}S_{11} & \mathbf{0} \\ \mathbf{0} & I \end{bmatrix} 
	& = \begin{bmatrix} I - \widehat{S}_{11}^{-1}S_{11} & \mathbf{0} \\ \mathbf{0} & I \end{bmatrix}p(M_{11}^{-1}A) , \\
p(L_{22}^{-1}A) \begin{bmatrix} I & \mathbf{0} \\ \mathbf{0} & I - \widehat{S}_{22}^{-1}S_{22}\end{bmatrix} 
	& =\begin{bmatrix} I & \mathbf{0} \\ \mathbf{0} & I - \widehat{S}_{22}^{-1}S_{22} \end{bmatrix}  p(M_{22}^{-1}A), \\
\begin{bmatrix} I - S_{11}\widehat{S}_{11}^{-1} & \mathbf{0} \\ \mathbf{0} & I \end{bmatrix} p(AL_{11}^{-1}) 
	& = p(AM_{11}^{-1})\begin{bmatrix} I - S_{11}\widehat{S}_{11}^{-1} & \mathbf{0} \\ \mathbf{0} & I \end{bmatrix} , \\
\begin{bmatrix} I & \mathbf{0} \\ \mathbf{0} & I - S_{22}\widehat{S}_{22}^{-1}\end{bmatrix} p(AU_{22}^{-1}) 
	& = p(AM_{22}^{-1})\begin{bmatrix} I & \mathbf{0} \\ \mathbf{0} & I - S_{22}\widehat{S}_{22}^{-1} \end{bmatrix}.
  \end{align*}
    If the Schur-complement fixed-point, for example
  $I - \widehat{S}_{11}^{-1}S_{11}$, is invertible, then the above
  equalities are similarity relations between a consistent polynomial applied to an
  LDU-preconditioned operator and a block-triangular preconditioned operator.
\end{proposition}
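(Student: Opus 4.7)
The plan is to verify the proposition directly by comparing both sides blockwise, leveraging the explicit formulas already derived in \eqref{eq:pTri} and \eqref{eq:LDU11}, rather than doing any new computation. In other words, the key work is bookkeeping: observe that \eqref{eq:pTri} already factors $p(U_{11}^{-1}A)$ as a product of three block matrices, and the middle factor is precisely $p(M_{11}^{-1}A)$ from \eqref{eq:LDU11}. Once one recognizes this, the first equality is almost immediate.

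More concretely, I would proceed as follows. First, set $D := \mathrm{diag}(I - \widehat{S}_{11}^{-1}S_{11},\,I)$ and show $p(U_{11}^{-1}A)\,D = D\,p(M_{11}^{-1}A)$ by computing both products block by block. Using \eqref{eq:tri11} applied to the polynomial $p$, the off-diagonal block of $p(U_{11}^{-1}A)$ is $A_{22}^{-1}A_{21} F$, with $F$ defined in \eqref{eq:F0}; multiplying on the right by $I - \widehat{S}_{11}^{-1}S_{11}$ collapses this via the telescoping identity \eqref{eq:F} to $A_{22}^{-1}A_{21}\bigl(p(I) - p(\widehat{S}_{11}^{-1}S_{11})\bigr)$, which matches the off-diagonal block of $D\,p(M_{11}^{-1}A)$ read off from \eqref{eq:LDU11}. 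For the $(1,1)$-block one uses the fact that $p(\widehat{S}_{11}^{-1}S_{11})$ commutes with $I - \widehat{S}_{11}^{-1}S_{11}$ (both are polynomials in the same operator), and the $(2,2)$- and $(1,2)$-blocks are trivial. Note that consistency $p(0)=1$, i.e.\ $\alpha_0 = 1$, is used precisely in the intermediate step of \eqref{eq:F}.

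The remaining three identities follow by entirely analogous arguments: for $p(L_{22}^{-1}A)$ one uses the $L_{22}^{-1}A$ analogue of \eqref{eq:tri11} and the $(2,2)$-block version of the Schur-complement identity; for the two right-preconditioned statements one simply transposes the role of left and right multiplication, applying \eqref{eq:F}-type identities where the off-diagonal block now sits above the diagonal and the similarity factor multiplies on the left. Each case is a short block computation that mirrors the first verbatim.

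The last sentence of the proposition is then immediate: if $I - \widehat{S}_{kk}^{-1}S_{kk}$ is invertible, the block-diagonal factor $D$ is invertible, so the equalities become genuine similarity relations $p(U_{11}^{-1}A) = D\,p(M_{11}^{-1}A)\,D^{-1}$, and likewise for the other three. The main (minor) obstacle is being careful with left versus right multiplication, and with which Schur complement ($S_{11}$ vs.\ $S_{22}$, one-sided acted by $\widehat{S}_{kk}^{-1}$ on the left vs.\ the right) appears in each of the four cases; no genuinely new estimate is required beyond the telescoping identity \eqref{eq:F}.
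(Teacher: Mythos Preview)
Your proposal is correct and follows essentially the same approach as the paper: the paper's proof simply refers back to the derivations in \S\ref{sss:blockLDU} and \S\ref{sss:blockT}, and you have accurately identified that the key ingredients are the block form \eqref{eq:tri11}, the telescoping identity \eqref{eq:F} (where consistency $p(0)=1$ enters), and the explicit form \eqref{eq:LDU11} of $p(M_{11}^{-1}A)$. Your blockwise verification is in fact slightly more careful than the paper in that it establishes the intertwining relation $p(U_{11}^{-1}A)\,D = D\,p(M_{11}^{-1}A)$ directly, without first assuming invertibility of $D$ as in \eqref{eq:pTri}.
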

\begin{proof}
  The proof follows from derivations analogous to those in
  \cref{sss:blockLDU} and \cref{sss:blockT}.
\end{proof}

Combining with a three-term representation of block LDU
preconditioners yields the change of basis matrix between block
triangular preconditioner operators and the preconditioned Schur
complement. For example, consider $p(L_{22}^{-1}A)$.  From
\cref{eq:M22A} and \cref{prop:sim},
\begin{equation*}
Qp(L_{22}^{-1}A) = p\left(\begin{bmatrix} I & \mathbf{0} \\ \mathbf{0} & \widehat{S}_{22}^{-1}S_{22} \end{bmatrix}\right) Q , 
	\hspace{3ex}\textnormal{where}\hspace{3ex}
Q := \begin{bmatrix} I & A_{11}^{-1}A_{12}(I - \widehat{S}_{22}^{-1}S_{22}) \\ \mathbf{0} & I - \widehat{S}_{22}^{-1}S_{22} \end{bmatrix}. 
\end{equation*}
If we suppose that $I - \widehat{S}_{22}^{-1}S_{22}$ is invertible,
then $Q$ is invertible and we can construct the norm in which
fixed-point or Krylov iterations applied to $L_{22}^{-1}A$ are
equivalent to the preconditioned Schur complement. For any consistent
polynomial $p(t)$,
{\small
\begin{equation*}
\|p(L_{22}^{-1}A)\| = \left\|p\left(\begin{bmatrix} I & \mathbf{0} \\ \mathbf{0} & \widehat{S}_{22}^{-1}S_{22} \end{bmatrix}\right)\right\|_{(QQ^*)^{-1}},
\hspace{2ex}
\left\|p\left(\begin{bmatrix} I & \mathbf{0} \\ \mathbf{0} & \widehat{S}_{22}^{-1}S_{22} \end{bmatrix}\right)\right\| = \|p(L_{22}^{-1}A)\|_{Q^*Q}.
\end{equation*}
}
Similar results are straightforward to derive for $p(U_{11}^{-1}A)$,
$p(AL_{11}^{-1})$, and $p(AU_{22}^{-1})$.

\subsubsection{On bounding minimizing Krylov polynomials}
\label{sec:2x2:poly:bound}

To motivate the framework used for most of the proofs to follow in
\cref{sec:min}, consider block-LDU preconditioning (for example,
\cref{eq:LDU11}). Observe that a polynomial $p(t)$ of the
preconditioned operator is a block-triangular matrix consisting of
combinations of $p(t)$ applied to the preconditioned Schur complement,
and $p(I)$. A natural way to bound a minimizing polynomial from above
is to then define
\begin{equation}
  \label{eq:q}
  q(t) := \varphi(t)(1-t),
\end{equation}
for some consistent polynomial $\varphi(t)$. Applying $q$ to the
preconditioned operator eliminates the identity terms, and we are left
with, for example, terms involving
$\varphi(\widehat{S}^{-1}S)(I - \widehat{S}^{-1}S)$. This is just one
fixed-point iteration applied to the preconditioned Schur complement,
and some other consistent polynomial applied to the preconditioned
Schur complement, which we can choose to be a certain minimizing
polynomial.

\Cref{prop:sim} shows that such an approximation is also convenient
for block triangular preconditioning.  The $(1-t)$ term applies the
appropriate transformation to the off-diagonal term as in
\cref{eq:F}. As in the case of block-LDU preconditioning, we are
then left with a block triangular matrix, with terms consisting of
$\varphi$ applied to the preconditioned Schur complement.

In terms of notation, in this paper $\varphi^{(d)}$ denotes some form
of minimizing polynomial, with superscript $(d)$ indicating the
polynomial degree $d$. Subscripts, e.g., $\varphi_{22}^{(d)}$,
indicate a minimizing polynomial for the corresponding
(preconditioned) $(2,2)$-Schur complement, and $q$ denotes a
polynomial of the form in \cref{eq:q}.

\section{Minimizing Krylov polynomials}
\label{sec:min}

This section uses the relations derived in \cref{sec:2x2:poly} to
prove a relation between the Krylov minimizing polynomial for the
preconditioned $2\times 2$ operator and that for the preconditioned
Schur complement. Approximate
block-LDU preconditioning is analyzed in \cref{sec:min:ldu}, followed
by block-triangular {preconditioning} in \cref{sec:min:tri}, and
block-Jacobi {preconditioning} in \cref{sec:min:diag}. As mentioned
previously, the Krylov method, such as left-preconditioned GMRES, is
referred to interchangeably with the equivalent minimizing polynomial.

\subsection{Approximate block-LDU preconditioning}
\label{sec:min:ldu}

This section first considers approximate block-LDU preconditioning and
GMRES in \Cref{th:ldu_gmres}, proving equivalence between minimizing
polynomials of the $2\times 2$ preconditioned operator and the
preconditioned Schur complement. Although we are primarily interested
in nonsymmetric operators in this paper (and thus not CG), it is
demonstrated in \Cref{th:ldu_cg} that analogous techniques can be
applied to analyze preconditioned CG. Due to the induced matrix norm
used in CG, the key step is in deriving a reduced Schur-complement
induced norm on the preconditioned Schur complement
problem.

\begin{theorem}[Block-LDU preconditioning and GMRES]
  \label{th:ldu_gmres}
  Let $\varphi^{(d)}$ denote a minimizing polynomial of the
  preconditioned operator of degree $d$ in the $\ell^2$-norm, for
  initial residual $\mathbf{r} = [\mathbf{r}_1; \mathbf{r}_2]$ (or
  initial preconditioned residual for right preconditioning). Let
  $\varphi_{kk}^{(d)}$ be the minimizing polynomial for
  $\widehat{S}_{kk}^{-1}S_{kk}$ in the $\ell^2$-norm, for initial
  residual $\mathbf{r}_k$, and $k\in\{1,2\}$. Then, {\small
        \begin{align*}
      \|\varphi_{11}^{(d)}(\widehat{S}_{11}^{-1}S_{11})\mathbf{r}_1\| \leq \|\varphi^{(d)}(M_{11}^{-1}A)\mathbf{r}\|
	& \leq \left\|\begin{bmatrix} I \\ -A_{22}^{-1}A_{21}\end{bmatrix} (I - \widehat{S}_{11}^{-1}S_{11})\varphi_{11}^{(d-1)}(\widehat{S}_{11}^{-1}S_{11})\mathbf{r}_1\right\|, \\
\frac{1}{\sqrt{2}}\left\|\varphi_{11}^{(d)}(S_{11}\widehat{S}_{11}^{-1})\hat{\mathbf{r}}_1\right\| \leq
	\|\varphi^{(d)}(AM_{11}^{-1})\mathbf{r}\| &
	\leq \left\|(I - S_{11}\widehat{S}_{11}^{-1})\varphi_{11}^{(d-1)}(S_{11}\widehat{S}_{11}^{-1})\hat{\mathbf{r}}_1\right\|, \\
\|\varphi_{22}^{(d)}(\widehat{S}_{22}^{-1}S_{22})\mathbf{r}_2\| \leq \|\varphi^{(d)}(M_{22}^{-1}A)\mathbf{r}\| 
	& \leq \left\|\begin{bmatrix} -A_{11}^{-1}A_{12} \\ I \end{bmatrix} (I - \widehat{S}_{22}^{-1}S_{22})\varphi_{22}^{(d-1)}
	(\widehat{S}_{22}^{-1}S_{22})\mathbf{r}_2\right\| , \\
\frac{1}{\sqrt{2}}\left\|\varphi_{22}^{(d)}(S_{22}\widehat{S}_{22}^{-1})\hat{\mathbf{r}}_2\right\| \leq
	\|\varphi^{(d)}(AM_{22}^{-1})\mathbf{r}\| &
	\leq \left\|(I - S_{22}\widehat{S}_{22}^{-1})\varphi_{22}^{(d-1)}(S_{22}\widehat{S}_{22}^{-1}) \hat{\mathbf{r}}_2\right\|.
    \end{align*}
  }
    where
  $\hat{\mathbf{r}}_1 := \mathbf{r}_1 - A_{12}A_{22}^{-1}\mathbf{r}_2$
  and
  $\hat{\mathbf{r}}_2 := \mathbf{r}_2 - A_{21}A_{11}^{-1}\mathbf{r}_1$.

  Now let $\varphi^{(d)}$ and $\varphi_{kk}^{(d)}$ denote minimizing
  polynomials of degree $d$ over all vectors in the
  $\ell^2$-norm. Then,
    {\small
    \begin{align*}
\|\varphi_{11}^{(d)}(\widehat{S}_{11}^{-1}S_{11})\| \leq \|\varphi^{(d)}(M_{11}^{-1}A)\|
	& \leq \left\|\begin{bmatrix} I \\ -A_{22}^{-1}A_{21}\end{bmatrix}\right\|
	\left\|(I - \widehat{S}_{11}^{-1}S_{11})\varphi_{11}^{(d-1)}(\widehat{S}_{11}^{-1}S_{11})\right\|, \\
\|\varphi_{11}^{(d)}(\widehat{S}_{11}^{-1}S_{11})\| \leq 
	\|\varphi^{(d)}(AM_{11}^{-1})\| & \leq \left\|\begin{bmatrix} I & -A_{12}A_{22}^{-1} \end{bmatrix} \right\|
	\left\|(I - S_{11}\widehat{S}_{11}^{-1})\varphi_{11}^{(d-1)}(S_{11}\widehat{S}_{11}^{-1})\right\| , \\
\|\varphi_{22}^{(d)}(\widehat{S}_{22}^{-1}S_{22})\| \leq \|\varphi^{(d)}(M_{22}^{-1}A)\| 
	& \leq \left\|\begin{bmatrix} -A_{11}^{-1}A_{12} \\ I \end{bmatrix}\right\| 
	\left\|(I - \widehat{S}_{22}^{-1}S_{22})\varphi_{22}^{(d-1)} (\widehat{S}_{22}^{-1}S_{22})\right\| , \\
\|\varphi_{22}^{(d)}(\widehat{S}_{22}^{-1}S_{22})\| \leq \|\varphi^{(d)}(AM_{22}^{-1})\| 
	& \leq \left\|\begin{bmatrix} -A_{21}A_{11}^{-1} & I \end{bmatrix} \right
	\|\left\|(I - S_{22}\widehat{S}_{22}^{-1})\varphi_{22}^{(d-1)}(S_{22}\widehat{S}_{22}^{-1})\right\|.
    \end{align*}}
\end{theorem}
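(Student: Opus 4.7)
The plan is to establish each of the eight bounds (four variants of preconditioning $\times$ upper and lower inequalities) using the block-polynomial expressions \cref{eq:LDU11}--\cref{eq:LDU22} from \cref{sss:blockLDU}.

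For the upper bounds, I would follow the ansatz of \cref{sec:2x2:poly:bound} and introduce the test polynomial $q(t) = (1-t)\varphi_{kk}^{(d-1)}(t)$. This is a consistent polynomial of degree $d$ since $q(0) = \varphi_{kk}^{(d-1)}(0) = 1$. The key property is that $q(I) = 0$, so substituting $q$ into the block expressions annihilates every $p(I)$-term and reduces the preconditioned polynomial applied to $\mathbf{r}$ to a single expression of the form
\begin{equation*}
q(M_{11}^{-1}A)\mathbf{r} = \begin{bmatrix} I \\ -A_{22}^{-1}A_{21} \end{bmatrix}(I - \widehat{S}_{11}^{-1}S_{11})\varphi_{11}^{(d-1)}(\widehat{S}_{11}^{-1}S_{11})\mathbf{r}_1,
\end{equation*}
and analogously for the other three preconditioned operators. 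Optimality of $\varphi^{(d)}$ as the degree-$d$ $\ell^2$-minimizer yields $\|\varphi^{(d)}(\cdot)\mathbf{r}\| \le \|q(\cdot)\mathbf{r}\|$, producing the stated upper bounds.

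For the lower bounds, I would exploit the fact that for each preconditioner one of the two blocks of $p(M^{-1}A)\mathbf{r}$ or $p(AM^{-1})\mathbf{r}$ is cleanly of the form $p(\widehat{S}_{kk}^{-1}S_{kk})\mathbf{r}_k$ or $p(S_{kk}\widehat{S}_{kk}^{-1})\hat{\mathbf{r}}_k$. In the left-preconditioning cases, the clean block (the first block for $M_{11}^{-1}A$ from \cref{eq:LDU11}, the second block for $M_{22}^{-1}A$ from \cref{eq:LDU22}) contains no cross-term involving the complementary residual, so bounding the full $\ell^2$ norm below by the norm of that block and invoking minimality of $\varphi_{kk}^{(d)}$ on $\mathbf{r}_k$ immediately gives the lower bound, with no constant loss. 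In the right-preconditioning cases, the Schur-complement polynomial acts on $\hat{\mathbf{r}}_k$ but is mixed with a cross term of the form $p(I)A_{kj}A_{jj}^{-1}\mathbf{r}_j$. To isolate the Schur-complement contribution, I would use the change-of-basis factorization $p(AM_{11}^{-1}) = T_1\,\mathrm{diag}\!\bigl(p(S_{11}\widehat{S}_{11}^{-1}),\,p(I)\bigr)T_1^{-1}$ with $T_1 = \begin{bmatrix} I & A_{12}A_{22}^{-1} \\ 0 & I \end{bmatrix}$ from \cref{eq:M22A}; subtracting $A_{kj}A_{jj}^{-1}$ times the second block from the first block recovers exactly $p(S_{kk}\widehat{S}_{kk}^{-1})\hat{\mathbf{r}}_k$, and combining with the second block via the standard identity $\|a+b\|^2 \le 2(\|a\|^2+\|b\|^2)$ accounts for the $\frac{1}{\sqrt{2}}$ factor. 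The operator-norm statements then follow by taking the supremum over nonzero $\mathbf{r}$ (equivalently, over $\mathbf{r}_k$ or $\hat{\mathbf{r}}_k$) and pulling the fixed block-vector prefactors such as $[I;\,-A_{22}^{-1}A_{21}]$ out of the norm submultiplicatively.

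The main obstacle is the right-preconditioning lower bound, where separating the Schur-complement polynomial from the $p(I)$ cross term requires a tight rearrangement of the block contributions to recover the stated constant; the left-preconditioning cases and all upper bounds follow from the transparent block structure of the LDU factorization in \cref{sss:blockLDU} essentially by inspection.
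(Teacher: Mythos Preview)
Your plan for the upper bounds and for the left-preconditioning lower bounds is exactly the paper's: the same test polynomial $q(t)=(1-t)\varphi_{kk}^{(d-1)}(t)$ is used to annihilate the $p(I)$ blocks in \cref{eq:LDU22}/\cref{eq:LDU11}, and the left-preconditioning lower bounds come from reading off the clean $\varphi^{(d)}(\widehat{S}_{kk}^{-1}S_{kk})\mathbf{r}_k$ block and invoking minimality of $\varphi_{kk}^{(d)}$. The operator-norm extensions also match.

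The gap is in your right-preconditioning lower bound. Your route recovers $p(S_{kk}\widehat{S}_{kk}^{-1})\hat{\mathbf{r}}_k$ as $(\text{block}_1)-A_{kj}A_{jj}^{-1}(\text{block}_2)$ and then applies $\|a+b\|^2\le 2(\|a\|^2+\|b\|^2)$. But that subtraction introduces the operator $A_{kj}A_{jj}^{-1}$ into the estimate: you end up with
\[
\|p(S_{kk}\widehat{S}_{kk}^{-1})\hat{\mathbf{r}}_k\|^2 \le 2\bigl(\|\text{block}_1\|^2 + \|A_{kj}A_{jj}^{-1}\|^2\,\|\text{block}_2\|^2\bigr),
\]
which only yields the constant $\tfrac{1}{\sqrt{2}\max(1,\|A_{kj}A_{jj}^{-1}\|)}$, not the clean $\tfrac{1}{\sqrt{2}}$ in the theorem. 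The paper does \emph{not} subtract one block from a multiple of the other. Instead it writes $p(AM_{22}^{-1})\mathbf{r}$ directly in the form $[\mathbf{x};\,\mathbf{x}+\mathbf{y}]$ with $\mathbf{x}=\varphi^{(d)}(I)\mathbf{r}_1$ and $\mathbf{y}=\varphi^{(d)}(S_{22}\widehat{S}_{22}^{-1})\hat{\mathbf{r}}_2$, and then invokes the elementary identity
\[
\bigl\|[\mathbf{x};\,\mathbf{x}+\mathbf{y}]\bigr\|^2 \;=\; 2\|\mathbf{x}\|^2+\|\mathbf{y}\|^2+2\operatorname{Re}\langle\mathbf{x},\mathbf{y}\rangle \;\ge\; 2\|\mathbf{x}\|^2+\|\mathbf{y}\|^2-2\|\mathbf{x}\|\|\mathbf{y}\| \;\ge\; \tfrac{1}{2}\|\mathbf{y}\|^2,
\]
which produces the $\tfrac{1}{\sqrt{2}}$ factor without ever bringing $A_{kj}A_{jj}^{-1}$ into the bound. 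In other words, the paper exploits that the cross-term in the mixed block is already (a scalar multiple of) the clean block, so no change-of-basis subtraction is needed; your subtraction step is precisely where the extra factor appears.
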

\begin{proof}  
  First, recall that left-preconditioned GMRES is equivalent to
  minimizing the initial residual based on a consistent polynomial in
  $M_{22}^{-1}A$.  Let $\varphi_{22}^{(d)}(t)$ be the minimizing
  polynomial of degree $d$ for $\widehat{S}_{22}^{-1}S_{22}$, where
  $\varphi(0) = 1$.  Define the degree $d+1$ polynomial
  $q(t) := \varphi_{22}^{(d)}(t)(1-t)$. Notice that $q(0) = 1$,
  $q(1) = 0$, and from \cref{eq:LDU22} we have
    \begin{equation*}
    q(M_{22}^{-1}A) = \begin{bmatrix} \mathbf{0} & - A_{11}^{-1}A_{12}q(\widehat{S}_{22}^{-1}S_{22}) \\ \mathbf{0} & q(\widehat{S}_{22}^{-1}S_{22})\end{bmatrix}.
  \end{equation*}
    Let $\varphi^{(d+1)}$ be the minimizing polynomial in $M_{22}^{-1}A$
  of degree $d+1$ for initial residual $\mathbf{r}$. Then,
    \begin{align*}
    \|\varphi^{(d+1)}(M_{22}^{-1}A)\mathbf{r}\| & \leq \|q(M_{22}^{-1}A)\mathbf{r}\| 
                                                  = \left\|\begin{bmatrix} -A_{11}^{-1}A_{12} \\ I \end{bmatrix} (I - \widehat{S}_{22}^{-1}S_{22})\varphi_{22}^{(d)}(\widehat{S}_{22}^{-1}S_{22})\mathbf{r}_2\right\|.
  \end{align*}
    Taking the {supremum} over $\mathbf{r}$ and noting that
  $\|\mathbf{r}\| \geq \|\mathbf{r}_2\|$, this immediately yields an
  ideal GMRES bound as well, where the minimizing polynomial of degree
  $d+1$ in norm, $\varphi^{(d+1)}$, is bounded via
    \begin{align*}
    \|\varphi^{(d+1)}(M_{22}^{-1}A)\| \leq \left\|\begin{bmatrix} -A_{11}^{-1}A_{12} \\ I \end{bmatrix}\right\|
    \left\|(I - \widehat{S}_{22}^{-1}S_{22})\varphi_{22}^{(d)}(\widehat{S}_{22}^{-1}S_{22})\right\|.
  \end{align*}
    
  Right-preconditioned GMRES is equivalent to the $\ell^2$-minimizing
  consistent polynomial in $AM_{22}^{-1}$ applied to the initial preconditioned
  residual. A similar proof as above for right preconditioning yields
    \begin{align*}
    \|\varphi^{(d+1)}(AM_{22}^{-1})\mathbf{r}\| & \leq \|(I - S_{22}\widehat{S}_{22}^{-1})\varphi_{22}^{(d)}
                                                  (S_{22}\widehat{S}_{22}^{-1})\hat{\mathbf{r}}_2\|, \\
    \|\varphi^{(d+1)}(AM_{22}^{-1})\| & \leq \left\| \begin{bmatrix} -A_{21}A_{11}^{-1} & I \end{bmatrix} \right\|
                                                                                          \left\|(I - S_{22}\widehat{S}_{22}^{-1})\varphi_{22}^{(d)}(S_{22}\widehat{S}_{22}^{-1})\right\|.
  \end{align*}
    where $\mathbf{r}$ now refers to the initial preconditioned residual,
  $\varphi$ refers to minimizing polynomials for $AM_{22}^{-1}$, and
  $\hat{\mathbf{r}}_2 := \mathbf{r}_2 -A_{21}A_{11}^{-1}\mathbf{r}_1$.

  For a lower bound, let $\varphi^{(d)}$ be the minimizing polynomial
  of degree $d$ in $M_{22}^{-1}A$ for $\mathbf{r}$. Then, for an
  $\ell^p$-norm with $p\in[1,\infty]$,
    \begin{align*}
    \|\varphi^{(d)}(M_{22}^{-1}A)\mathbf{r}\| &= \left\| \begin{bmatrix} \varphi^{(d)}(I)\mathbf{r}_1 + A_{11}^{-1}A_{12}\left(\varphi^{(d)}(I) -
          \varphi^{(d)}(\widehat{S}_{22}^{-1}S_{22})\right)\mathbf{r}_2 \\ \varphi^{(d)}(\widehat{S}_{22}^{-1}S_{22}) \mathbf{r}_2 \end{bmatrix} \right\| \\
                                              & \geq \|\varphi^{(d)}(\widehat{S}_{22}^{-1}S_{22})\mathbf{r}_2\| \\
                                              & \geq \|\varphi_{22}^{(d)}(\widehat{S}_{22}^{-1}S_{22})\mathbf{r}_2\|.
  \end{align*}
    This also yields an ideal GMRES bound, where the minimizing
  polynomial in norm is bounded via
  $\|\varphi^{(d)}(M_{22}^{-1}A)\| \geq
  \|\varphi_{22}^{(d)}(\widehat{S}_{22}^{-1}S_{22})\|$.  For right
  preconditioning,
    \begin{align}
    \label{eq:LDU_AM}
    \|\varphi^{(d)}(AM_{22}^{-1})\mathbf{r}\| & = \left\| \begin{bmatrix} \varphi^{(d)}(I)\mathbf{r}_1 \\
        \varphi^{(d)}(I)\mathbf{r}_1 + \varphi^{(d)}(S_{22}\widehat{S}_{22}^{-1})(\mathbf{r}_2 - A_{21}A_{11}^{-1}\mathbf{r}_1) \end{bmatrix} \right\|.
  \end{align}
    The ideal GMRES bound follows immediately by noting that the
  {supremum} over $\mathbf{r}$ is greater than or equal to setting 
  $\mathbf{r}_1 = \mathbf{0}$ and taking the {supremum} over
  $\mathbf{r}_2$, which yields
    \begin{align*}
    \|\varphi^{(d)}(AM_{22}^{-1})\| & \geq \|\varphi^{(d)}(\widehat{S}_{22}^{-1}S_{22})\| \geq \|\varphi_{22}^{(d)}(\widehat{S}_{22}^{-1}S_{22})\|.
  \end{align*}
    Then, note the identity
    \begin{align}
    \label{eq:equality}
    \begin{split}
      \left\|\begin{bmatrix} \mathbf{x} \\ \mathbf{x} + \mathbf{y}\end{bmatrix}\right\|^2 & =
      2\|\mathbf{x}\|^2 + \|\mathbf{y}\|^2 + \langle\mathbf{x},\mathbf{y}\rangle + \langle\mathbf{y},\mathbf{x}\rangle \\
      & \hspace{5ex}\geq 2\|\mathbf{x}\|^2 + \|\mathbf{y}\|^2 - 2\|\mathbf{x}\| \|\mathbf{y}\| \geq \frac{\|\mathbf{y}\|^2}{2}.
    \end{split}
  \end{align}
    Applying \cref{eq:equality} to \cref{eq:LDU_AM} with
  $\mathbf{x} := \varphi^{(d)}(I)\mathbf{r}_1$ and
  $\mathbf{y}:=
  \varphi^{(d)}(S_{22}\widehat{S}_{22}^{-1})(\mathbf{r}_2 -
  A_{21}A_{11}^{-1}\mathbf{r}_1)$ yields the lower bound on
  $\|\varphi^{(d)}(AM_{22}^{-1})\mathbf{r}\|$.

  Appealing to \cref{eq:LDU11} and analogous derivations yield similar
  results for the block-LDU preconditioner with Schur-complement
  approximation in the (1,1)-block.
                                                                                                \end{proof}

\begin{remark}[Left vs. right preconditioning]
  Interestingly, there exist vectors $\mathbf{x}$ and $\mathbf{y}$
  such that \cref{eq:equality} is tight, suggesting there may be
  specific examples where
  $\|\varphi^{(d)}(AM_{22}^{-1})\mathbf{r}\| \leq
  \left\|\varphi_{22}^{(d)}(S_{22}\widehat{S}_{22}^{-1})\hat{\mathbf{r}}_2\right\|$.
  If this is the case (rather than a flaw elsewhere in the line of
  proof), it means there are initial residuals where the preconditioned
  $2\times 2$ operator converges faster than the corresponding
  preconditioned Schur complement, a scenario that is not possible 
  with left-preconditioning.
\end{remark}

Although the focus of this paper is general nonsymmetric operators,
similar techniques as used in the proof of \Cref{th:ldu_gmres} can be
applied to analyze CG, resulting in the following theorem.

\begin{theorem}[LDU preconditioning and CG]\label{th:ldu_cg}
  Let $\varphi^{(d)}$ be a minimizing polynomial in $M_{kk}^{-1}A$, of
  degree $d$, in the $A$-norm, for initial error vector
  $\mathbf{e} = [\mathbf{e}_1; \mathbf{e}_2]$, and $k\in\{1,2\}$. Let
  $\varphi_{kk}^{(d)}$ be the minimizing polynomial for
  $\widehat{S}_{kk}^{-1}S_{kk}$ in the $S_{kk}$ norm, for initial
  error vector $\mathbf{e}_k$. Then,
    \begin{align*}
    \|\varphi_{kk}^{(d)}(\widehat{S}_{kk}^{-1}S_{kk})\mathbf{e}_k\|_{S_{kk}}
    & \leq  \|\varphi^{(d)}(M_{kk}^{-1}A)\mathbf{e}\|_A \\
    & \leq\|(I - \widehat{S}_{kk}^{-1}S_{kk})\varphi_{kk}^{(d-1)}(\widehat{S}_{kk}^{-1}S_{kk})\mathbf{e}_k\|_{S_{kk}}.
  \end{align*}
    Now, let $\varphi^{(d)}$ denote minimizing polynomials over all
  vectors in the appropriate norm ($A$-norm or $S_{kk}$-norm),
  representing worst-case CG convergence. Then,
    \begin{align*}
    \|\varphi_{kk}^{(d)}(\widehat{S}_{kk}^{-1}S_{kk})\|_{S_{kk}}
    & \leq  \|\varphi^{(d)}(M_{kk}^{-1}A)\|_A \\
    & \leq\|(I - \widehat{S}_{kk}^{-1}S_{kk})\varphi_{kk}^{(d-1)}(\widehat{S}_{kk}^{-1}S_{kk})\|_{S_{kk}}.
  \end{align*}
  \end{theorem}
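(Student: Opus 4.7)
My plan is to mimic the $\ell^2$ argument used in the proof of \Cref{th:ldu_gmres}, but with the $A$-norm playing the role of the Euclidean norm. The key algebraic tool is the block LDU ``completing the square'' identity
\begin{equation*}
  \left\| \begin{bmatrix} \mathbf{x} \\ \mathbf{y} \end{bmatrix} \right\|_A^2
  \;=\; \|\mathbf{x} + A_{11}^{-1}A_{12}\mathbf{y}\|_{A_{11}}^2 \;+\; \|\mathbf{y}\|_{S_{22}}^2,
\end{equation*}
together with its mirror image $\|\mathbf{x}\|_{S_{11}}^2 + \|\mathbf{y}+A_{22}^{-1}A_{21}\mathbf{x}\|_{A_{22}}^2$ used for the $k=1$ case. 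Both follow by expanding the quadratic form $\mathbf{v}^T A\mathbf{v}$ and using the definitions $S_{22} = A_{22} - A_{12}^T A_{11}^{-1} A_{12}$ and $S_{11} = A_{11} - A_{12}A_{22}^{-1}A_{12}^T$. This identity cleanly decouples the two block components into exactly the $A_{11}$-, $S_{22}$-, and $S_{11}$-norm quantities that appear in the statement, and it is the $A$-norm analogue of the triangle/block-inequality step in the GMRES argument.

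For the case $k=2$, the upper bound follows by testing against the consistent degree-$d$ polynomial $q(t) := \varphi_{22}^{(d-1)}(t)(1-t)$, which satisfies $q(1)=0$. Substituting into \cref{eq:LDU22} (or, equivalently, conjugating by $P_1$) gives $q(M_{22}^{-1}A)\mathbf{e} = [-A_{11}^{-1}A_{12}\mathbf{v};\,\mathbf{v}]$ with $\mathbf{v} := q(\widehat{S}_{22}^{-1}S_{22})\mathbf{e}_2$; feeding this into the identity, the first summand vanishes exactly, leaving $\|q(M_{22}^{-1}A)\mathbf{e}\|_A = \|\mathbf{v}\|_{S_{22}}$, which is the right-hand side of the claimed bound. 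For the lower bound I would apply $\varphi^{(d)}$ to $\mathbf{e}$ via \cref{eq:LDU22}, plug into the identity, and discard the nonnegative $A_{11}$-norm contribution; what remains is $\|\varphi^{(d)}(\widehat{S}_{22}^{-1}S_{22})\mathbf{e}_2\|_{S_{22}}$, which dominates $\|\varphi_{22}^{(d)}(\widehat{S}_{22}^{-1}S_{22})\mathbf{e}_2\|_{S_{22}}$ by optimality of $\varphi_{22}^{(d)}$ in the $S_{22}$-norm.

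The worst-case statements follow from the same identity. The upper bound is immediate from $\|\mathbf{e}\|_A \geq \|\mathbf{e}_2\|_{S_{22}}$; and for the lower bound I would restrict the supremum over $\mathbf{e}$ to the subspace $\{\mathbf{e} : \mathbf{e}_1 = -A_{11}^{-1}A_{12}\mathbf{e}_2\}$, on which the $A_{11}$-summands in the identity vanish for \emph{any} consistent polynomial (since then the top block of $p(M_{22}^{-1}A)\mathbf{e}$ is precisely $-A_{11}^{-1}A_{12}\,p(\widehat{S}_{22}^{-1}S_{22})\mathbf{e}_2$, using $p(I)=p(1)I$ and the offset). On this subspace $\|p(M_{22}^{-1}A)\mathbf{e}\|_A/\|\mathbf{e}\|_A$ reduces exactly to $\|p(\widehat{S}_{22}^{-1}S_{22})\mathbf{e}_2\|_{S_{22}}/\|\mathbf{e}_2\|_{S_{22}}$, and optimality of $\varphi_{22}^{(d)}$ finishes the bound. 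The $k=1$ statements are obtained by exchanging the roles of the two blocks, invoking \cref{eq:LDU11}, and using the mirror identity. The only step where it is easy to slip is the vanishing of the $A_{11}$-contribution in the upper-bound construction: one must verify that the $p(1)$-terms in \cref{eq:LDU22} are exactly what cancel $A_{11}^{-1}A_{12}\mathbf{y}$ in the identity, and that this cancellation is what makes the bound sharp rather than just inequality-valued.
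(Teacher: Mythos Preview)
Your proposal is correct and is essentially the same argument as the paper's. Your ``completing the square'' identity $\|[\mathbf{x};\mathbf{y}]\|_A^2 = \|\mathbf{x}+A_{11}^{-1}A_{12}\mathbf{y}\|_{A_{11}}^2 + \|\mathbf{y}\|_{S_{22}}^2$ is precisely the block LDU factorization of the $A$-quadratic form that the paper invokes (implicitly, by writing $A = (P_1^{-1})^TD P_1^{-1}$ and pairing it with the three-term formula \cref{eq:M22A}); your upper bound via $q(t)=(1-t)\varphi_{22}^{(d-1)}(t)$, your lower bound by dropping the $A_{11}$-summand, and your worst-case lower bound by restricting to $\mathbf{e}_1=-A_{11}^{-1}A_{12}\mathbf{e}_2$ all match the paper's steps exactly.
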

\begin{proof}
  See \cref{ap:proof_ldu_cg}.
\end{proof}

From \Cref{th:ldu_cg} we note that for CG, upper and lower
inequalities prove that after $d$ iterations, the preconditioned $2\times 2$
system converges at least as accurately as $d-1$ CG iterations on the
preconditioned Schur complement, $\widehat{S}_{kk}^{-1}S_{kk}$,
plus one fixed-point iteration, and not more accurately than $d$ CG
iterations on the preconditioned Schur complement.
Because there are operators for which convergence of
fixed-point and CG are equivalent, this indicates there are cases for
which the upper and lower bounds in \Cref{th:ldu_cg} are tight. Note,
these bounds also have no dependence on the off-diagonal blocks, a
result not shared by other preconditioners and Krylov methods examined
in this paper. It is unclear if the larger upper bound in GMRES in 
\Cref{th:ldu_gmres} is a flaw in the line of proof, or if CG on
the preconditioned $2\times 2$ system can achieve slightly better
convergence (in the appropriate norm) with respect to the 
preconditioned Schur complement than GMRES.

\subsection{Block-triangular preconditioning}
\label{sec:min:tri}

In this section we consider block-triangular preconditioning. In
particular, we prove equivalence between minimizing polynomials of the
$2\times 2$ preconditioned operator and the preconditioned Schur
complement for block-triangular preconditioning. We consider
separately left preconditioning in \Cref{th:leftT_gmres} and right
preconditioning in \Cref{th:rightT_gmres}. Theorems are stated for the
preconditioners that take the simplest form in
\cref{eq:errorrespropops} (left vs. right and Schur complement in the
(1,1)- or (2,2)-block), the same as those discussed in
\cref{sec:2x2:poly}. However, note that, for example, any polynomial
$p(U_{22}^{-1}A) = U_{22}^{-1}p(AU_{22}^{-1})U_{22}$. Thus, if we
prove a result for left preconditioning with $U_{22}^{-1}$, a similar
result holds for right preconditioning, albeit with modified
constants/residual.  Such results are not stated here for the sake of
brevity.

\begin{theorem}[Left block-triangular preconditioning and GMRES]
  \label{th:leftT_gmres}  
  Let $\varphi^{(d)}$ denote a minimizing polynomial of the
  preconditioned operator of degree $d$ in the $\ell^2$-norm, for
  initial residual $\mathbf{r} = [\mathbf{r}_1; \mathbf{r}_2]$. Let
  $\varphi_{kk}^{(d)}$ be the minimizing polynomial for
  $\widehat{S}_{kk}^{-1}S_{kk}$ in the $\ell^2$-norm, for initial
  residual $\mathbf{r}_k$, and $k\in\{1,2\}$. Then,
    \begin{align*}
    \|\varphi_{11}^{(d)}(\widehat{S}_{11}^{-1}S_{11})\mathbf{r}_1\| \leq \|\varphi^{(d)}(U_{11}^{-1}A)\mathbf{r}\|
    & \leq\left\| \begin{bmatrix} I - \widehat{S}_{11}^{-1}S_{11} \\ -A_{22}^{-1}A_{21}\end{bmatrix}
    \varphi_{11}^{(d-1)}(\widehat{S}_{11}^{-1}S_{11})\mathbf{r}_1\right\|, \\
    \|\varphi_{22}^{(d)}(\widehat{S}_{22}^{-1}S_{22})\mathbf{r}_2\| \leq \|\varphi^{(d)}(L_{22}^{-1}A)\mathbf{r}\| 
    & \leq  \left\| \begin{bmatrix} -A_{11}^{-1}A_{12} \\ I - \widehat{S}_{22}^{-1}S_{22}\end{bmatrix}
    \varphi_{22}^{(d-1)}(\widehat{S}_{22}^{-1}S_{22})\mathbf{r}_2\right\|.
  \end{align*}

  Now let $\varphi^{(d)}$ and $\varphi_{kk}^{(d)}$ denote minimizing
  polynomials of degree $d$ over all vectors in the
  $\ell^2$-norm (instead of for the initial residual). Then,
    \begin{align*}
    \|\varphi_{11}^{(d)}(\widehat{S}_{11}^{-1}S_{11})\| \leq \|\varphi^{(d)}(U_{11}^{-1}A)\|
    & \leq \left\| \begin{bmatrix} I - \widehat{S}_{11}^{-1}S_{11} \\ -A_{22}^{-1}A_{21}\end{bmatrix}\right\|
    \left\|\varphi_{11}^{(d-1)}(\widehat{S}_{11}^{-1}S_{11})\right\|,  \\
    \|\varphi_{22}^{(d)}(\widehat{S}_{22}^{-1}S_{22})\| \leq \|\varphi^{(d)}(L_{22}^{-1}A)\| 
    & \leq  \left\| \begin{bmatrix} -A_{11}^{-1}A_{12} \\ I - \widehat{S}_{22}^{-1}S_{22}\end{bmatrix}\right\|
    \left\| \varphi_{22}^{(d-1)}(\widehat{S}_{22}^{-1}S_{22})\right\|.
  \end{align*}
  \end{theorem}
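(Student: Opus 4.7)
The plan mirrors the proof of \cref{th:ldu_gmres}, but with the block-triangular polynomial identity \eqref{eq:pTri} (and the coefficient identity \eqref{eq:F}) replacing the block-LDU identities \eqref{eq:LDU11}--\eqref{eq:LDU22}. As in \cref{sec:2x2:poly:bound}, the two key ingredients are: (i) a consistent polynomial of the form $q(t) := \varphi(t)(1-t)$ that annihilates the diagonal $p(I)$ block, and (ii) the observation that the (1,1)-block of $p(U_{11}^{-1}A)$ is just $p(\widehat{S}_{11}^{-1}S_{11})$, which immediately drops out as a lower bound by row truncation.

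For the upper bound I would let $\varphi_{11}^{(d-1)}$ denote the degree-$(d-1)$ minimizer for $\widehat{S}_{11}^{-1}S_{11}$ on $\mathbf{r}_1$, and define the consistent polynomial $q(t) := \varphi_{11}^{(d-1)}(t)(1-t)$ of degree $d$. Since $q(I) = \mathbf{0}$, substituting into \eqref{eq:pTri} (using $q(I) - q(\widehat{S}_{11}^{-1}S_{11}) = -(I - \widehat{S}_{11}^{-1}S_{11})\varphi_{11}^{(d-1)}(\widehat{S}_{11}^{-1}S_{11})$, which by \eqref{eq:F} cancels the $(I - \widehat{S}_{11}^{-1}S_{11})^{-1}$ appearing in \eqref{eq:pTri}) collapses $q(U_{11}^{-1}A)$ to a matrix whose only nonzero column is the first, and whose action on $[\mathbf{r}_1;\mathbf{r}_2]$ is exactly
\begin{equation*}
q(U_{11}^{-1}A)\mathbf{r} = \begin{bmatrix} I - \widehat{S}_{11}^{-1}S_{11} \\ -A_{22}^{-1}A_{21} \end{bmatrix}\varphi_{11}^{(d-1)}(\widehat{S}_{11}^{-1}S_{11})\mathbf{r}_1 .
\end{equation*}
Minimality of $\varphi^{(d)}$ among consistent degree-$d$ polynomials then yields $\|\varphi^{(d)}(U_{11}^{-1}A)\mathbf{r}\| \leq \|q(U_{11}^{-1}A)\mathbf{r}\|$, which is the claimed upper bound. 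For the lower bound, the (1,1)-block of $\varphi^{(d)}(U_{11}^{-1}A)$ is $\varphi^{(d)}(\widehat{S}_{11}^{-1}S_{11})$ and the (1,2)-block vanishes (cf.\ \eqref{eq:tri11}), so truncating to the first block-row and using minimality of $\varphi_{11}^{(d)}$ gives $\|\varphi^{(d)}(U_{11}^{-1}A)\mathbf{r}\| \geq \|\varphi^{(d)}(\widehat{S}_{11}^{-1}S_{11})\mathbf{r}_1\| \geq \|\varphi_{11}^{(d)}(\widehat{S}_{11}^{-1}S_{11})\mathbf{r}_1\|$.

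The ideal GMRES statements follow by the standard supremum arguments: for the upper bound, apply submultiplicativity to the factored expression for $q(U_{11}^{-1}A)$ and use $\|\mathbf{r}\| \geq \|\mathbf{r}_1\|$ to conclude the supremum over unit $\mathbf{r}$ majorizes the supremum over unit $\mathbf{r}_1$; for the lower bound, restrict to $\mathbf{r}_2 = \mathbf{0}$ and maximize over $\mathbf{r}_1$. The $L_{22}^{-1}A$ case is entirely analogous: the corresponding expansion of $(L_{22}^{-1}A)^d$ (the transposed analogue of \eqref{eq:tri11}) swaps the roles of the diagonal blocks and of $\mathbf{r}_1,\mathbf{r}_2$, and the off-diagonal column becomes $[-A_{11}^{-1}A_{12};\, I - \widehat{S}_{22}^{-1}S_{22}]$; the $q$-construction and the row-truncation lower bound port over verbatim.

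The main obstacle is really just bookkeeping---verifying that the $q(t) := \varphi_{11}^{(d-1)}(t)(1-t)$ trick produces a degree-$d$ polynomial whose off-diagonal factor collapses cleanly via \eqref{eq:F} (so that no invertibility of $I - \widehat{S}_{11}^{-1}S_{11}$ needs to be assumed, even though \eqref{eq:pTri} was stated under that hypothesis), and checking that the resulting $\mathbf{r}_1$-only action matches the claimed bound exactly. All conceptual machinery is already in place from \cref{sec:2x2:poly:bound} and the proof of \cref{th:ldu_gmres}.
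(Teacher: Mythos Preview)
Your proposal is correct and follows essentially the same route as the paper's proof: the same $q(t)=(1-t)\varphi^{(d-1)}(t)$ construction for the upper bound, the same row-truncation argument for the lower bound, and the same supremum/submultiplicativity passage to the ideal-GMRES statements. The only cosmetic difference is that the paper computes $q(L_{22}^{-1}A)$ directly as the matrix product $(I-L_{22}^{-1}A)\,\varphi(L_{22}^{-1}A)$ (so the invertibility of $I-\widehat S_{22}^{-1}S_{22}$ never enters), whereas you route through \eqref{eq:pTri} and then observe the cancellation via \eqref{eq:F}; both land on the same collapsed block form.
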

\begin{proof}
  Recall left-preconditioned GMRES is equivalent to the minimizing
  consistent polynomial in the $\ell^2$-norm over the preconditioned
  operator, for initial residual $\mathbf{r}$. 
  Consider lower-triangular preconditioning with an approximate Schur
  complement in the (2,2)-block,
    \begin{equation}
    L_{22}^{-1}A = \begin{bmatrix} I & A_{11}^{-1}A_{12} \\ \mathbf{0} & \widehat{S}_{22}^{-1}S_{22} \end{bmatrix}.
  \end{equation}
    Let $\varphi(t)$ be some consistent polynomial, and define a second
  consistent polynomial $q(t) := (1-t)\varphi(t)$. Plugging in
  $t = L_{22}^{-1}A$ and expanding the polynomial $\varphi$ analogous
  to the steps in \cref{eq:tri11} and \cref{eq:F0} yields
    \begin{align*}
    q(L_{22}^{-1}A) & = \begin{bmatrix} \mathbf{0} & -A_{11}^{-1}A_{12} \\ \mathbf{0} & I - \widehat{S}_{22}^{-1}S_{22} \end{bmatrix}
                                                                                        \begin{bmatrix} \varphi(I) & F \\ \mathbf{0} & \varphi(\widehat{S}_{22}^{-1}S_{22}) \end{bmatrix} \\
                    & = \begin{bmatrix} \mathbf{0} & -A_{11}^{-1}A_{12}\varphi(\widehat{S}_{22}^{-1}S_{22}) \\ 
                      \mathbf{0} & (I - \widehat{S}_{22}^{-1}S_{22})\varphi(\widehat{S}_{22}^{-1}S_{22}) \end{bmatrix},
  \end{align*}
    where $F$ is the upper left block of $q(L_{22}^{-1}A)$, similar to
  \cref{eq:F0}.
  
  Now let $\varphi^{(d)}$ be the minimizing polynomial in
  $L_{22}^{-1}A$ of degree $d$ for initial residual
  $\mathbf{r} = [\mathbf{r}_1; \mathbf{r}_2]$, and
  $\varphi_{22}^{(d)}$ be the minimizing polynomial in
  $\widehat{S}_{22}^{-1}S_{22}$ of degree $d$ for initial residual
  $\mathbf{r}_2$. Define the degree $d$ polynomial
  $q(t) := (1-t)\varphi_{22}^{(d-1)}(t)$. Then
    \begin{equation*}
    \|\varphi^{(d)}(L_{22}^{-1}A)\mathbf{r}\|
    \leq  \|q(L_{22}^{-1}A)\mathbf{r}\|
    = \left\| \begin{bmatrix} -A_{11}^{-1}A_{12} \\ I - \widehat{S}_{22}^{-1}S_{22}\end{bmatrix}
    \varphi_{22}^{(d-1)}(\widehat{S}_{22}^{-1}S_{22})\mathbf{r}_2\right\|.
  \end{equation*}
    Taking the {supremum} over $\mathbf{r}$ and appealing to the
  submultiplicative property of norms yields an upper bound on the
  minimizing polynomial in norm as well,
    \begin{align*}
    \|\varphi^{(d)}(L_{22}^{-1}A)\| & \leq \left\| \begin{bmatrix} -A_{11}^{-1}A_{12} \\ I - \widehat{S}_{22}^{-1}S_{22}\end{bmatrix}\right\|
    \left\| \varphi_{22}^{(d-1)}(\widehat{S}_{22}^{-1}S_{22})\right\|.
  \end{align*}
    
  A lower bound is also obtained in a straightforward manner for
  initial residual $\mathbf{r}$,
    \begin{align*}
    \|\varphi^{(d)}(L_{22}^{-1}A)\mathbf{r}\| 
    & =\left\|\begin{bmatrix} \varphi(I)^{(d)}\mathbf{r}_1 + F\mathbf{r}_2 \\ \varphi^{(d)}(\widehat{S}_{22}^{-1}S_{22})\mathbf{r}_2 \end{bmatrix} \right\|
    \geq \left\| \varphi^{(d)}(\widehat{S}_{22}^{-1}S_{22})\mathbf{r}_2 \right\|
    \geq \left\| \varphi_{22}^{(d)}(\widehat{S}_{22}^{-1}S_{22})\mathbf{r}_2 \right\|,
  \end{align*}
    which can immediately be extended to a lower bound on the minimizing
  polynomial in norm as well,
    \begin{align*}
    \|\varphi^{(d)}(L_{22}^{-1}A)\| & \geq \left\| \varphi_{22}^{(d)}(\widehat{S}_{22}^{-1}S_{22})\right\|.
  \end{align*}
  
  Analogous derivations yield bounds for an upper-triangular
  preconditioner with approximate Schur complement in the (1,1)-block.
\end{proof}

We next consider right block-triangular preconditioning.

\begin{theorem}[Right block-triangular preconditioning and GMRES]
  \label{th:rightT_gmres}
  Let $\varphi^{(d)}$ denote a minimizing polynomial of the
  preconditioned operator of degree $d$ in the $\ell^2$-norm, for
  initial preconditioned residual $\mathbf{r} = [\mathbf{r}_1; \mathbf{r}_2]$.
  Let $\varphi_{kk}^{(d)}$ be the minimizing polynomial for
  $\widehat{S}_{kk}^{-1}S_{kk}$ in the $\ell^2$-norm, for initial
  residual $\hat{\mathbf{r}}_k$, and $k\in\{1,2\}$. Then,
    \begin{align*}
    \|\varphi^{(d)}(AL_{11}^{-1})\mathbf{r}\|
    & \leq \left\| \varphi_{11}^{(d-1)}(S_{11}\widehat{S}_{11}^{-1})\hat{\mathbf{r}}_1\right\|, \\
    \|\varphi^{(d)}(AU_{22}^{-1})\mathbf{r}\| 
    & \leq  \left\| \varphi_{22}^{(d-1)}(S_{22}\widehat{S}_{22}^{-1})\hat{\mathbf{r}}_2\right\|,
  \end{align*}
    where
  $\hat{\mathbf{r}}_1 := (I - S_{11}\widehat{S}_{11}^{-1})\mathbf{r}_1
  - A_{12}A_{22}^{-1}\mathbf{r}_2$ and
  $\hat{\mathbf{r}}_2 := (I - S_{22}\widehat{S}_{22}^{-1})\mathbf{r}_2
  - A_{21}A_{11}^{-1}\mathbf{r}_1$.

  Now let $\varphi^{(d)}$ and $\varphi_{kk}^{(d)}$ denote minimizing
  polynomials of degree $d$ over all vectors in the $\ell^2$-norm
  (instead of for the initial preconditioned residual). Then,
    \begin{align*}
    \|\varphi_{11}^{(d)}(S_{11}\widehat{S}_{11}^{-1})\| \leq \|\varphi^{(d)}(AL_{11}^{-1})\|
    & \leq \left\| \begin{bmatrix} I - S_{11}\widehat{S}_{11}^{-1} & -A_{12}A_{22}^{-1}\end{bmatrix}\right\|
                                                                     \left\| \varphi_{11}^{(d-1)}(S_{11}\widehat{S}_{11}^{-1})\right\|,  \\
    \|\varphi_{22}^{(d)}(S_{22}\widehat{S}_{22}^{-1})\| \leq \|\varphi^{(d)}(AU_{22}^{-1})\| 
    & \leq \left\| \begin{bmatrix} -A_{21}A_{11}^{-1} & I - S_{22}\widehat{S}_{22}^{-1} \end{bmatrix}\right\|
                                                        \left\| \varphi_{22}^{(d-1)}(S_{22}\widehat{S}_{22}^{-1})\right\|.
  \end{align*}
  \end{theorem}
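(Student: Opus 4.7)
The plan is to mirror the argument used for \Cref{th:leftT_gmres}, adapting the block-triangular machinery of \Cref{sss:blockT} to the right-preconditioned operators. The explicit block forms of $AL_{11}^{-1}$ and $AU_{22}^{-1}$ given there already isolate $S_{kk}\widehat{S}_{kk}^{-1}$ in one diagonal block and the identity in the other, so any polynomial of the preconditioned operator is block triangular with diagonal blocks $p(S_{kk}\widehat{S}_{kk}^{-1})$ and $p(I)$. I will treat $AL_{11}^{-1}$ in detail; the argument for $AU_{22}^{-1}$ is the obvious swap of the $(1,1)$ and $(2,2)$ roles.

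The central device is the auxiliary consistent polynomial $q(t) := (1-t)\varphi(t)$ for $\varphi$ some consistent polynomial of degree $d-1$. Since $q(1)=0$, the factor $I - AL_{11}^{-1}$ has the form $\begin{bmatrix} I - S_{11}\widehat{S}_{11}^{-1} & -A_{12}A_{22}^{-1} \\ \mathbf{0} & \mathbf{0} \end{bmatrix}$, and direct block multiplication gives
\begin{equation*}
q(AL_{11}^{-1}) = \varphi(AL_{11}^{-1})(I - AL_{11}^{-1}) = \begin{bmatrix} \varphi(S_{11}\widehat{S}_{11}^{-1})(I - S_{11}\widehat{S}_{11}^{-1}) & -\varphi(S_{11}\widehat{S}_{11}^{-1})A_{12}A_{22}^{-1} \\ \mathbf{0} & \mathbf{0} \end{bmatrix}.
\end{equation*}
Applying this to $\mathbf{r} = [\mathbf{r}_1; \mathbf{r}_2]$ collapses the top block to the common left factor $\varphi(S_{11}\widehat{S}_{11}^{-1})$ acting on $\hat{\mathbf{r}}_1 = (I - S_{11}\widehat{S}_{11}^{-1})\mathbf{r}_1 - A_{12}A_{22}^{-1}\mathbf{r}_2$, so $\|q(AL_{11}^{-1})\mathbf{r}\| = \|\varphi(S_{11}\widehat{S}_{11}^{-1})\hat{\mathbf{r}}_1\|$. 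Choosing $\varphi$ to be the $\ell^2$-minimizing polynomial $\varphi_{11}^{(d-1)}$ for $S_{11}\widehat{S}_{11}^{-1}$ applied to $\hat{\mathbf{r}}_1$ yields the first stated pointwise upper bound, because the degree-$d$ consistent polynomial $q$ is a feasible competitor for $\varphi^{(d)}$.

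For the worst-case (ideal GMRES) upper bound I will extend this estimate by submultiplicativity, writing $\|\varphi_{11}^{(d-1)}(S_{11}\widehat{S}_{11}^{-1})\hat{\mathbf{r}}_1\| \le \|\varphi_{11}^{(d-1)}(S_{11}\widehat{S}_{11}^{-1})\|\cdot \|\hat{\mathbf{r}}_1\|$ and bounding $\|\hat{\mathbf{r}}_1\| \le \|[I - S_{11}\widehat{S}_{11}^{-1},\, -A_{12}A_{22}^{-1}]\|\cdot\|\mathbf{r}\|$, then taking the supremum over $\mathbf{r}$. For the matching lower bound, I restrict the supremum to residuals $\mathbf{r} = [\mathbf{r}_1; \mathbf{0}]$; the block-triangular form of $\varphi^{(d)}(AL_{11}^{-1})$ produces top block $\varphi^{(d)}(S_{11}\widehat{S}_{11}^{-1})\mathbf{r}_1$ and zero bottom block, whose supremum over $\mathbf{r}_1$ is $\|\varphi^{(d)}(S_{11}\widehat{S}_{11}^{-1})\|$, which is at least $\|\varphi_{11}^{(d)}(S_{11}\widehat{S}_{11}^{-1})\|$ by optimality of the Schur-complement minimizer.

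I anticipate no serious obstacle. The only point requiring care is the clean collapse of $\varphi(AL_{11}^{-1})(I - AL_{11}^{-1})$ despite the noncommutativity of $S_{11}\widehat{S}_{11}^{-1}$ and $A_{12}A_{22}^{-1}$; this works out because the zero block row of $I - AL_{11}^{-1}$ annihilates the unknown $(1,2)$-block of $\varphi(AL_{11}^{-1})$, leaving a product in which $\varphi(S_{11}\widehat{S}_{11}^{-1})$ appears as a common left factor. The identical computation, with the upper-triangular factor $I - AU_{22}^{-1} = \begin{bmatrix} \mathbf{0} & \mathbf{0} \\ -A_{21}A_{11}^{-1} & I - S_{22}\widehat{S}_{22}^{-1} \end{bmatrix}$ replacing $I - AL_{11}^{-1}$, gives the analogous bounds for $AU_{22}^{-1}$ and $\hat{\mathbf{r}}_2$.
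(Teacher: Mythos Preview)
Your proposal is correct and follows essentially the same route as the paper's proof: the same auxiliary polynomial $q(t)=(1-t)\varphi(t)$, the same block collapse of $\varphi(AL_{11}^{-1})(I-AL_{11}^{-1})$ via the zero row of $I-AL_{11}^{-1}$, the same submultiplicative passage to the ideal-GMRES upper bound, and the same lower bound by restricting the supremum to $\mathbf{r}=[\mathbf{r}_1;\mathbf{0}]$. The only cosmetic difference is that you write the $(1,1)$-entry as $\varphi(S_{11}\widehat{S}_{11}^{-1})(I-S_{11}\widehat{S}_{11}^{-1})$ whereas the paper has the factors in the other order, which is immaterial since both are polynomials in $S_{11}\widehat{S}_{11}^{-1}$ and hence commute.
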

\begin{proof}
  Recall right-preconditioned GMRES is equivalent to the minimizing
  consistent polynomial in the $\ell^2$-norm over the right-preconditioned
  operator, for initial preconditioned residual $\mathbf{r}$. 
  Consider
    \begin{align*}
    AL_{11}^{-1} = \begin{bmatrix} S_{11}\widehat{S}_{11}^{-1} & A_{12}A_{22}^{-1} \\ \mathbf{0} & I \end{bmatrix}.
  \end{align*}
    Defining $q(t) = \varphi(t)(1-t)$ we note that
          \begin{align*}
    q(AL_{11}^{-1}) & = \varphi(AL_{11}^{-1})(I - AL_{11}^{-1}) \\
                    & = \begin{bmatrix} \varphi(S_{11}\widehat{S}_{11}^{-1}) & F \\ \mathbf{0} & \varphi(I) \end{bmatrix} 
                                                                                                 \begin{bmatrix} I - S_{11}\widehat{S}_{11}^{-1} & -A_{12}A_{22}^{-1} \\ \mathbf{0} & \mathbf{0} \end{bmatrix} \\
                    & =  \begin{bmatrix} (I - S_{11}\widehat{S}_{11}^{-1})\varphi(S_{11}\widehat{S}_{11}^{-1}) & 
                      -\varphi(S_{11}\widehat{S}_{11}^{-1})A_{12}A_{22}^{-1} \\ \mathbf{0} & \mathbf{0} \end{bmatrix}.
  \end{align*}
    Then,
    \begin{align*}
    \|\varphi^{(d)}(AL_{11}^{-1})\mathbf{r}\| & \leq \|q(AL_{11}^{-1})\mathbf{r}\| \\
                                              &= \left\| \varphi_{11}^{(d-1)}(S_{11}\widehat{S}_{11}^{-1})\left((I - S_{11}\widehat{S}_{11}^{-1})\mathbf{r}_1 -
                                                A_{12}A_{22}^{-1}\mathbf{r}_2\right)\right\|, \\
    \|\varphi^{(d)}(AL_{11}^{-1})\| & \leq \left\| \begin{bmatrix} I - S_{11}\widehat{S}_{11}^{-1} & -A_{12}A_{22}^{-1}\end{bmatrix}\right\|
                                                                                                     \left\| \varphi_{11}^{(d-1)}(S_{11}\widehat{S}_{11}^{-1})\right\|.
  \end{align*}
    
  The lower bound on the minimizing polynomial in norm is obtained by
  noting
    \begin{align*}
    \|\varphi^{(d)}(AL_{11}^{-1})\| & = \sup_{\mathbf{r}\neq\mathbf{0}} \frac{ \left\|\begin{bmatrix}
          \varphi^{(d)}(S_{11}\widehat{S}_{11}^{-1}) & F \\ \mathbf{0} & \varphi(I) \end{bmatrix}
                                                                         \begin{bmatrix} \mathbf{r}_1 \\ \mathbf{r}_2 \end{bmatrix} \right\|}{\|\mathbf{r}\|} \\
                                    &\hspace{-5ex} \geq \sup_{\mathbf{r}_1\neq\mathbf{0}} \frac{\left\|\begin{bmatrix}
                                          \varphi^{(d)}(S_{11}\widehat{S}_{11}^{-1}) & F \\ \mathbf{0} & \varphi(I) \end{bmatrix}
                                                                                                         \begin{bmatrix} \mathbf{r}_1 \\ \mathbf{0} \end{bmatrix} \right\|}{\|\mathbf{r}_1\|} 
    = \|\varphi^{(d)}_{11}(S_{11}\widehat{S}_{11}^{-1})\|.
  \end{align*}
    Analogous derivations as above yield bounds for the right upper
  triangular preconditioner with Schur complement in the (2,2)-block,
  $AU_{22}^{-1}$.

\end{proof}

As discussed {previously}, similar results as \Cref{th:rightT_gmres} hold for
preconditioning with $U_{11}^{-1}$ and $L_{22}^{-1}$. However, it is
not clear if a lower bound for specific initial residual, as proven
for block-LDU and left block-triangular preconditioning in
\Cref{th:ldu_gmres} and \Cref{th:leftT_gmres}, holds for right
block-triangular preconditioning. For block-LDU preconditioning, the
lower bound is weaker for right preconditioning, including a factor of
$1/\sqrt{2}$.

\subsection{Block-Jacobi preconditioning}
\label{sec:min:diag}

In this section we prove equivalence between minimizing
polynomials of the $2\times 2$ preconditioned operator and the
preconditioned Schur complement for block-Jacobi preconditioning.

Let $q(t)$ be some polynomial in $t$, where $q(0) = 1$. Note that $q$
can always be written equivalently as a polynomial $p(1-t)$, under the
constraint that the sum of polynomial coefficients for $p$, say
$\{\alpha_i\}$, sum to one (to enforce $q(0) = 1$). Thus let
$\varphi^{(2d)}(D^{-1}A)$ be the minimizing polynomial of degree $2d$
in the $\ell^2$-norm, and let us express this equivalently as a
polynomial $p$, where
\begin{align*}
  p(I - D^{-1}A) &:= \sum_{i=0}^{2d} \alpha_i (I - D^{-1}A)^i \\
                 & = \sum_{i=0}^{{d}} \alpha_{2i}(I - D^{-1}A)^{2i} + 
                   \sum_{i=0}^{{d}-1} \alpha_{2i+1}(I - D^{-1}A)^{2i+1} \\
                 & = \sum_{i=0}^{{d}} \alpha_{2i}(I - D^{-1}A)^{2i} +
                   (I - D^{-1}A) \sum_{i=0}^{{d}-1} \alpha_{2i+1}(I - D^{-1}A)^{2i} .
\end{align*}
From \cref{eq:jac_p}, even powers of $I - D^{-1}A$ take a block
diagonal form, and we can write
\begin{align}
  p(I - D^{-1}A) &= \begin{bmatrix} \hat{p}(A_{11}^{-1}A_{12}A_{22}^{-1}A_{21}) & \mathbf{0} \\
    \mathbf{0}  & \hat{p}(A_{22}^{-1}A_{21}A_{11}^{-1}A_{12}) \end{bmatrix} \nonumber \\ & \hspace{-8ex}
                                                                                           + \begin{bmatrix} \mathbf{0} & -A_{11}^{-1}A_{12} \\ -A_{22}^{-1}A_{21} & \mathbf{0} \end{bmatrix}
                                                                                                                                                                     \begin{bmatrix} \tilde{p}(A_{11}^{-1}A_{12}A_{22}^{-1}A_{21}) & \mathbf{0} \\
                                                                                                                                                                       \mathbf{0}  & \tilde{p}(A_{22}^{-1}A_{21}A_{11}^{-1}A_{12}) \end{bmatrix} \nonumber \\
                 & = \begin{bmatrix} \hat{p}(A_{11}^{-1}A_{12}A_{22}^{-1}A_{21}) & -A_{11}^{-1}A_{12}\tilde{p}(A_{22}^{-1}A_{21}A_{11}^{-1}A_{12}) \\
                   -A_{22}^{-1}A_{21}\tilde{p}(A_{11}^{-1}A_{12}A_{22}^{-1}A_{21})  & \hat{p}(A_{22}^{-1}A_{21}A_{11}^{-1}A_{12}) \end{bmatrix} \nonumber\\
                                                   & = \begin{bmatrix} \hat{p}(I - A_{11}^{-1}S_{11}) & -A_{11}^{-1}A_{12}\tilde{p}(I - A_{22}^{-1}S_{22}) \\
                   -A_{22}^{-1}A_{21}\tilde{p}(I - A_{11}^{-1}S_{11})  & \hat{p}(I - A_{22}^{-1}S_{22}) \end{bmatrix}, \label{eq:blockD}
                                                                                                                       \end{align}
where $\hat{p}$ and $\tilde{p}$ are degree $d$ and $d-1$ polynomials
with coefficients $\{\hat{\alpha}_i\} \mapsfrom \{\alpha_{2i}\}$ and
$\{\tilde{\alpha}_i\} \mapsfrom \{\alpha_{2i+1}\}$, respectively. This
is the primary observation leading to the proof of
\Cref{th:jacobi}. Also note the identities that for any polynomial
$q$,
\begin{subequations}
  \begin{align}
    A_{11}^{-1}A_{12}q(I - A_{22}^{-1}S_{22})
    &= q(I - A_{11}^{-1}S_{11})A_{11}^{-1}A_{12},
    \\
    A_{22}^{-1}A_{21}q(I - A_{11}^{-1}S_{11})
    &= {q}(I - A_{22}^{-1}S_{22})A_{22}^{-1}A_{21},
  \end{align}
  \label{eq:Aq_qA}
\end{subequations}
which will be used with \cref{eq:blockD} in the derivations that
follow.

\begin{theorem}[Block-Jacobi preconditioning \& ideal GMRES]
  \label{th:jacobi}
  Let $\varphi(D^{-1}A)$ be the worst-case consistent minimizing
  polynomial of degree $2d$, in the $\ell^p$-norm, $p\in[1,\infty]$,
  for $D^{-1}A$. Let $\varphi_{11}^{(d)}$ and $\varphi_{22}^{(d)}$ be
  the minimizing polynomials of degree $d$ in the same norm, for
  $A_{11}^{-1}S_{11}$ and $A_{22}^{-1}S_{22}$, respectively. Then,
    \begin{align*}
    \|\varphi(D^{-1}A)\| &\geq \frac{\min \left\{ \|\varphi_{11}^{(d)}(A_{11}^{-1}S_{11})\|, 
                           \|\varphi_{11}^{(d)}(A_{22}^{-1}S_{22})\| \right\}}{1+\min\left\{\|A_{11}^{-1}A_{12}\|, \|A_{22}^{-1}A_{21}\|\right\}}, \\
    \frac{\|\varphi(D^{-1}A)\|}{\|A_{22}^{-1}A_{21}\| + \|A_{11}^{-1}A_{12}\|} & \leq
                                                                                 \min\left\{ \|\varphi_{11}^{(d-1)}(A_{11}^{-1}S_{11})\| ,  \|\varphi_{22}^{(d-1)}(A_{22}^{-1}S_{22})\|\right\}.
  \end{align*}

  Similarly, now let $\varphi_{11}^{(d)}$ and $\varphi_{22}^{(d)}$ be the
  minimizing polynomials of degree $d$ for $S_{11}A_{11}^{-1}$ and
  $S_{22}A_{22}^{-1}$, respectively. Then,
    \begin{align*}
    \|\varphi(AD^{-1})\| &\geq \frac{\min \left\{ \|\varphi_{11}^{(d)}(S_{11}A_{11}^{-1})\|, 
                           \|\varphi_{22}^{(d)}(S_{22}A_{22}^{-1})\| \right\}}{1+\min\left\{\|A_{21}A_{11}^{-1}\|, \|A_{12}A_{22}^{-1}\|\right\}}, \\
    \frac{\|\varphi(AD^{-1})\|}{\|A_{12}A_{22}^{-1}\| + \|A_{21}A_{11}^{-1}\|} & \leq
                                                                                 \min\left\{ \|\varphi_{11}^{(d-1)}(S_{11}A_{11}^{-1})\|, \|\varphi_{22}^{(d-1)}(S_{22}A_{22}^{-1})\|\right\}.
  \end{align*}
  \end{theorem}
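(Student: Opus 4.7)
The starting point for both inequalities will be the block identity \cref{eq:blockD}, which decomposes any degree-$2d$ polynomial $p(I-D^{-1}A)$ with $p(1)=1$ into two pieces $\hat p$ (of degree $d$) and $\tilde p$ (of degree $d-1$) linked by the single constraint $\hat p(1)+\tilde p(1)=1$. The workhorses throughout are the commutation identities \cref{eq:Aq_qA} together with the algebraic facts $A_{11}^{-1}A_{12}A_{22}^{-1}A_{21}=I-A_{11}^{-1}S_{11}$ and $A_{22}^{-1}A_{21}A_{11}^{-1}A_{12}=I-A_{22}^{-1}S_{22}$ already used to pass from the powers \cref{eq:jac_p} to \cref{eq:blockD}. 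I focus on left preconditioning; the right-preconditioned statements will follow from the same argument applied to $p(I-AD^{-1})$, whose block decomposition simply swaps $A_{kk}^{-1}S_{kk}$ for $S_{kk}A_{kk}^{-1}$ and the off-diagonal factors for $A_{ij}A_{jj}^{-1}$.

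For the upper bound, I would exhibit two explicit degree-$2d$ test polynomials obtained by taking $\hat p\equiv 0$ together with $\tilde p(s)=\varphi_{kk}^{(d-1)}(1-s)$, for $k=1$ and $k=2$; the constraint is satisfied because $\tilde p(1)=\varphi_{kk}^{(d-1)}(0)=1$. With $\hat p=0$ the diagonal blocks in \cref{eq:blockD} vanish, and after commuting $A_{11}^{-1}A_{12}$ (respectively $A_{22}^{-1}A_{21}$) across $\tilde p$ via \cref{eq:Aq_qA}, both off-diagonal blocks reduce to expressions involving only $\varphi_{kk}^{(d-1)}(A_{kk}^{-1}S_{kk})$. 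Splitting the resulting block anti-diagonal matrix into its two elementary pieces and applying the triangle inequality then yields $\|p(I-D^{-1}A)\|\le(\|A_{11}^{-1}A_{12}\|+\|A_{22}^{-1}A_{21}\|)\|\varphi_{kk}^{(d-1)}(A_{kk}^{-1}S_{kk})\|$ for each $k$; taking the minimum over $k$ and using $\|\varphi(D^{-1}A)\|\le\|p(I-D^{-1}A)\|$ completes the upper bound.

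For the lower bound, the pivotal observation is that regardless of what $\hat p,\tilde p$ happen to be (subject only to $\hat p(1)+\tilde p(1)=1$), the degree-$d$ polynomial $r(s):=\hat p(s)+s\,\tilde p(s)$ automatically satisfies $r(1)=1$, so $\bar r(t):=r(1-t)$ is a legitimate consistent degree-$d$ candidate for the $(2,2)$-Schur-complement minimizer. To expose $r$ inside $p(I-D^{-1}A)$, I apply the operator to the test vector $\mathbf{x}=\begin{bmatrix}-A_{11}^{-1}A_{12}\mathbf{w}\\ \mathbf{w}\end{bmatrix}$; a short calculation using \cref{eq:Aq_qA} together with $A_{22}^{-1}A_{21}A_{11}^{-1}A_{12}=I-A_{22}^{-1}S_{22}$ collapses the lower half of the output vector to $r(I-A_{22}^{-1}S_{22})\mathbf{w}=\bar r(A_{22}^{-1}S_{22})\mathbf{w}$. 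Since for every $\ell^p$-norm the full-vector norm dominates any sub-vector norm, and $\|\mathbf{x}\|_p\le(1+\|A_{11}^{-1}A_{12}\|)\|\mathbf{w}\|_p$ by the triangle inequality, taking a supremum over $\mathbf{w}$ and invoking optimality of $\varphi_{22}^{(d)}$ gives $\|\varphi(D^{-1}A)\|\ge\|\varphi_{22}^{(d)}(A_{22}^{-1}S_{22})\|/(1+\|A_{11}^{-1}A_{12}\|)$. The mirror test vector $\begin{bmatrix}\mathbf{w}\\ -A_{22}^{-1}A_{21}\mathbf{w}\end{bmatrix}$ delivers the symmetric bound with $\varphi_{11}^{(d)}$ and $\|A_{22}^{-1}A_{21}\|$, and a short case analysis on which of $\|\varphi_{kk}^{(d)}\|$ and which of the off-diagonal norms is smaller shows that the maximum of these two bounds always dominates the stated right-hand side. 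The main obstacle, and the only genuinely creative step, is discovering $r$ and the test vector that extracts it as a pure Schur-complement polynomial on a single block: without this, one is stuck analyzing $\hat p$ and $\tilde p$ block-by-block, and any comparison to a consistent polynomial $\varphi_{kk}^{(d)}$ fails whenever the normalization mass $\hat p(1)+\tilde p(1)=1$ is split nontrivially between the two halves.
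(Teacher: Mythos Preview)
Your upper-bound argument is essentially the paper's: set $\hat p\equiv 0$, choose $\tilde p$ to realize the degree-$(d-1)$ Schur-complement minimizer, split the anti-diagonal matrix by the triangle inequality, and commute via \cref{eq:Aq_qA}. Nothing to add there.

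Your lower-bound argument, however, is genuinely different from the paper's and in fact cleaner. The paper restricts to test vectors with $\mathbf r_2=\mathbf 0$ (or $\mathbf r_1=\mathbf 0$), which separates the two blocks of the output and forces one to track $\hat p$ and $\tilde p$ individually. Since neither of those is consistent on its own, the paper introduces weights $\hat s=\hat p(1)$ and $\tilde s=\tilde p(1)$ with $\hat s+\tilde s=1$, normalizes each polynomial by its weight to make it consistent, and then performs a min--max optimization over $\hat s$ to arrive at the stated bound. Your test vector $\mathbf x=[-A_{11}^{-1}A_{12}\mathbf w;\;\mathbf w]$ is exactly what sidesteps this: the lower block of $p(I-D^{-1}A)\mathbf x$ collapses (via \cref{eq:Aq_qA} and $A_{22}^{-1}A_{21}A_{11}^{-1}A_{12}=I-A_{22}^{-1}S_{22}$) to $r(I-A_{22}^{-1}S_{22})\mathbf w$ with $r(s)=\hat p(s)+s\,\tilde p(s)$, and the consistency constraint $r(1)=1$ is automatic. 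This yields directly
\[
\|\varphi(D^{-1}A)\|\;\ge\;\frac{\|\varphi_{22}^{(d)}(A_{22}^{-1}S_{22})\|}{1+\|A_{11}^{-1}A_{12}\|},
\]
which is in fact at least as strong as the paper's intermediate bound $\min\{C_1,C_2\}/(1+\|A_{11}^{-1}A_{12}\|)$ obtained after the $\hat s$-optimization. Combined with the mirror test vector, your pair of inequalities dominates the theorem's stated lower bound by the case analysis you describe. What you lose relative to the paper is only the slightly sharper (but unstated) intermediate expression $C_1C_2/(\|A_{11}^{-1}A_{12}\|C_1+C_2)$; what you gain is a shorter argument with no optimization over normalization weights.
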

\begin{proof}
  Recall preconditioned GMRES is equivalent to the minimizing
  consistent polynomial in the $\ell^2$-norm over the preconditioned
  operator. We start with the lower bounds.  Let $\varphi^{(2d)}$ be
  the consistent minimizing polynomial (in norm) of degree $2d$ for
  $D^{-1}A$, and let $p(t)$ be a polynomial such that
  $p(I-D^{-1}A) = \varphi(D^{-1}A)$, where coefficients of $p$, say
  $\{\alpha_i\}$ are such that $\sum \alpha_i = 1$. From
  \cref{eq:blockD} and \cref{eq:Aq_qA},
    \begin{align*}
    \|p(I - D^{-1}A)\| & = \sup_{\mathbf{r}\neq \mathbf{0}} \frac{\left\|\begin{bmatrix} \hat{p}(I - A_{11}^{-1}S_{11}) &
          -\tilde{p}(I - A_{11}^{-1}S_{11})A_{11}^{-1}A_{12} \\ -\tilde{p}(I - A_{22}^{-1}S_{22})A_{22}^{-1}A_{21} &
          \hat{p}(I - A_{22}^{-1}S_{22}) \end{bmatrix} \begin{bmatrix}\mathbf{r}_1\\\mathbf{r}_2\end{bmatrix}\right\|}{\|\mathbf{r}\|} \\
                       &\hspace{-15ex} \geq \sup_{\mathbf{r}_1\neq \mathbf{0}} \frac{\left\|\begin{bmatrix} \hat{p}(I - A_{11}^{-1}S_{11}) &
                             -\tilde{p}(I - A_{11}^{-1}S_{11})A_{11}^{-1}A_{12} \\ -\tilde{p}(I - A_{22}^{-1}S_{22})A_{22}^{-1}A_{21} &
                             \hat{p}(I - A_{22}^{-1}S_{22}) \end{bmatrix} \begin{bmatrix}\mathbf{r}_1\\\mathbf{0}\end{bmatrix}\right\|}{\|\mathbf{r}_1\|} \\
                       &\hspace{-15ex} \geq \max\left\{ \sup_{\mathbf{r}_1\neq\mathbf{0}} \frac{\left\| \hat{p}(I - A_{11}^{-1}S_{11})\mathbf{r}_1\right\|}{\|\mathbf{r}_1\|},
                         \sup_{\mathbf{r}_1\neq\mathbf{0}} \frac{\left\| \tilde{p}(I - A_{22}^{-1}S_{22})A_{22}^{-1}A_{21}\mathbf{r}_1\right\|}{\|\mathbf{r}_1\|} \right\} \\
                       &\hspace{-15ex} \geq \max\left\{ \sup_{\mathbf{r}_1\neq\mathbf{0}} \frac{\left\| \hat{p}(I - A_{11}^{-1}S_{11})\mathbf{r}_1\right\|}{\|\mathbf{r}_1\|},
                         \sup_{A_{11}^{-1}A_{12}\tilde{\mathbf{r}}_2\neq\mathbf{0}} \frac{\left\|
                         \tilde{p}(I - A_{22}^{-1}S_{22})A_{22}^{-1}A_{21}A_{11}^{-1}A_{12}\tilde{\mathbf{r}}_2\right\|}
                         {\|A_{11}^{-1}A_{12}\tilde{\mathbf{r}}_2\|} \right\}\\
                       &\hspace{-15ex} \geq \max\left\{ \sup_{\mathbf{r}_1\neq\mathbf{0}} \frac{\left\| \hat{p}(I - A_{11}^{-1}S_{11})\mathbf{r}_1\right\|}{\|\mathbf{r}_1\|},
                         \sup_{\tilde{\mathbf{r}}_2\neq\mathbf{0}} \frac{\left\|
                         \tilde{p}(I - A_{22}^{-1}S_{22})(I - A_{22}^{-1}S_{22})\tilde{\mathbf{r}}_2\right\|}
                         {\|A_{11}^{-1}A_{12}\|\|\tilde{\mathbf{r}}_2\|} \right\}.
  \end{align*}
    Note that the step introducing the maximum in the third line holds
  for $\ell^p$-norms, $p\in[1,\infty]$.

  Now recall that to enforce $\varphi^{(2d)}(0) = 1$, it must be the
  case that coefficients of $p$ sum to one. Thus, it must be the case
  that for coefficients of $\hat{p}$ and $\tilde{p}$, say
  $\{\hat{\alpha}_i\}$ and $\{\tilde{\alpha}_i\}$,
  $\sum_i \hat{\alpha}_i + \sum_i \tilde{\alpha}_i := \hat{s} +
  \tilde{s} = 1$. Let us normalize such that each polynomial within
  the supremum has coefficients of sum one, which yields
    \begin{align}
    &\|p(I - D^{-1}A)\| \nonumber\\
    &\hspace{3ex} \geq \max\left\{ |\hat{s}| \sup_{\mathbf{r}_1\neq\mathbf{0}} \frac{\left\|
      \hat{p}(I - A_{11}^{-1}S_{11})\mathbf{r}_1\right\|}{|\hat{s}|\|\mathbf{r}_1\|},
      |\tilde{s}| \sup_{\tilde{\mathbf{r}}_2\neq\mathbf{0}} \frac{\left\|
      \tilde{p}(I - A_{22}^{-1}S_{22})(I - A_{22}^{-1}S_{22})\tilde{\mathbf{r}}_2\right\|}
      {|\tilde{s}|\|A_{11}^{-1}A_{12}\|\|\tilde{\mathbf{r}}_2\|} \right\} \nonumber\\
    &\hspace{3ex} \geq \max \left\{ |\hat{s}| \left\| \varphi_{11}^{(d)}(A_{11}^{-1}S_{11})\right\|, 
      \frac{|1 - \hat{s}|}{\|A_{11}^{-1}A_{12}\|} \left\| \varphi_{22}^{(d)}(A_{22}^{-1}S_{22})\right\| \right\}\nonumber \\
    &\hspace{3ex} := \max \left\{ |\hat{s}|C_1, \frac{|1 - \hat{s}|}{\|A_{11}^{-1}A_{12}\|}C_2\right\}, \label{eq:max}
  \end{align}
    where $\varphi_{11}^{(d)}$ is the minimizing polynomial of degree
  $d$ of $A_{11}^{-1}S_{11}$, and similarly for $\varphi_{22}^{(d)}$
  and $A_{22}^{-1}S_{22}$. In the 22-case, note that
  $\tilde{p}(I - A_{22}^{-1}S_{22})(I - A_{22}^{-1}S_{22})$ can be
  expressed as a polynomial of degree $d$ in
  $A_{22}^{-1}S_{22}$. Furthermore, in expressing the two polynomials,
  $\tilde{p}(I - A_{22}^{-1}S_{22})$ and the product
  $\tilde{p}(I - A_{22}^{-1}S_{22})(I - A_{22}^{-1}S_{22})$, as
  polynomials in $A_{22}^{-1}S_{22}$, the identity coefficients are
  equal. In particular, when scaling by $1/|\tilde{s}|$, both
  polynomials are equivalent to consistent polynomials in
  $A_{22}^{-1}S_{22}$ of degree $d-1$ and $d$, respectively. This
  allows us to bound the polynomials in $A_{22}^{-1}S_{22}$ (as well
  as $A_{11}^{-1}S_{11}$) from below using the true worst-case
  minimizing polynomial (in norm).
  
  To derive bounds for all $p$, we now minimize over $\hat{s}$. If
  $\hat{s} \geq 1$, it follows that $\|p(I - D^{-1}A)\| \geq C_1$, and
  for $\hat{s} \leq 0$, we have
  $\|p(I - D^{-1}A)\| \geq \tfrac{C_2}{\|A_{11}^{-1}A_{12}\|}$.  For
  $\hat{s}\in(0,1)$, the minimum over $\hat{s}$ of the maximum 
  in \cref{eq:max} is obtained at $\hat{s}$ such that
  $\hat{s}C_1 = \tfrac{1 - \hat{s}}{\|A_{11}^{-1}A_{12}\|}C_2$, or
  $\hat{s}_{min} :=
  \tfrac{C_2}{\|A_{11}^{-1}A_{12}\|C_1+C_2}$. Evaluating yields
    \begin{equation*}
    \|\varphi^{(2d)}(D^{-1}A)\| = \|p(I - D^{-1}A)\|
    \geq \frac{C_1C_2}{\|A_{11}^{-1}A_{12}\|C_1+C_2} 
    \geq \frac{\min\{C_1,C_2\}}{1 + \|A_{11}^{-1}A_{12}\|}.
  \end{equation*}
    An analogous proof as above, but initially setting
  $\mathbf{r}_1 = \mathbf{0}$ rather than $\mathbf{r}_2$ yields a
  similar result,
    \begin{equation*}
    \|\varphi^{(2d)}(D^{-1}A)\|
    = \|p(I - D^{-1}A)\|
    \geq \frac{\min\{C_1,C_2\}}{1 + \|A_{22}^{-1}A_{21}\|}.
  \end{equation*}
  
  Right preconditioning follows an analogous derivation, where
  $\varphi^{(2d)}(AD^{-1}) = p(I - AD^{-1})$ instead takes the form
    \begin{align*}
    p(I - AD^{-1}) & = \begin{bmatrix} \hat{p}(I - S_{11}A_{11}^{-1}) & -A_{12}A_{22}^{-1}\tilde{p}(I - S_{22}A_{22}^{-1}) \\
      -A_{21}A_{11}^{-1}\tilde{p}(I - S_{11}A_{11}^{-1})  & \hat{p}(I - S_{22}A_{22}^{-1}) \end{bmatrix}.
  \end{align*}
  
  Next, we prove the upper bounds. Similar to previously, from
  \cref{eq:blockD} we have
    \begin{align*}
    \|p(I - D^{-1}A)\| & = \left\|\begin{bmatrix} \hat{p}(I - A_{11}^{-1}S_{11}) &
	-\tilde{p}(I - A_{11}^{-1}S_{11})A_{11}^{-1}A_{12} \\ -A_{22}^{-1}A_{21}\tilde{p}(I - A_{11}^{-1}S_{11}) &
	\hat{p}(I - A_{22}^{-1}S_{22}) \end{bmatrix}\right\| \\
                       & \leq \left\|\begin{bmatrix} \hat{q}(I - A_{11}^{-1}S_{11})&-\tilde{q}(I - A_{11}^{-1}S_{11})A_{11}^{-1}A_{12} \\
                           -A_{22}^{-1}A_{21}\tilde{q}(I - A_{11}^{-1}S_{11}) & \hat{q}(I - A_{22}^{-1}S_{22}) \end{bmatrix}\right\|,
  \end{align*}
    for polynomials $\hat{q}$ and $\tilde{q}$ of degree $d$ and $d-1$
  such that coefficients satisfy
  $\sum_i \hat{\alpha}_i + \sum_i \tilde{\alpha}_i = 1$. Let
  $\hat{q} = \mathbf{0}$. Then,
    \begin{align*}
    \|p(I - D^{-1}A)\| & \leq \left\|\begin{bmatrix} \mathbf{0} &-\tilde{q}(I - A_{11}^{-1}S_{11})A_{11}^{-1}A_{12} \\
	-A_{22}^{-1}A_{21}\tilde{q}(I - A_{11}^{-1}S_{11}) & \mathbf{0}\end{bmatrix}\right\| \\
                       &\hspace{-4ex} \leq \left\|\begin{bmatrix} \mathbf{0} & \mathbf{0} \\
                           -A_{22}^{-1}A_{21}\tilde{q}(I - A_{11}^{-1}S_{11}) & \mathbf{0}\end{bmatrix}\right\| + 
                                                                                \left\|\begin{bmatrix} \mathbf{0} &-\tilde{q}(I - A_{11}^{-1}S_{11})A_{11}^{-1}A_{12} \\
                                                                                    \mathbf{0} & \mathbf{0}\end{bmatrix}\right\| \\
                       &\hspace{-4ex} = \|A_{22}^{-1}A_{21}\tilde{q}(I - A_{11}^{-1}S_{11})\| + \|\tilde{q}(I - A_{11}^{-1}S_{11})A_{11}^{-1}A_{12}\| \\
                       &\hspace{-4ex} \leq \|\tilde{q}(I - A_{11}^{-1}S_{11})\| \left(\|A_{22}^{-1}A_{21}\| + \|A_{11}^{-1}A_{12}\|\right).
  \end{align*}
    Recalling that
  $\tilde{q}(I - A_{11}^{-1}S_{11})A_{11}^{-1}A_{12} =
  A_{11}^{-1}A_{12} \tilde{q}(I - A_{22}^{-1}S_{22})$ and
  $A_{22}^{-1}A_{21}\tilde{q}(I - A_{11}^{-1}S_{11}) = \tilde{q}(I -
  A_{22}^{-1}S_{22})A_{22}^{-1}A_{21}$ for any polynomial $\tilde{q}$,
  we also have the equivalent result
    \begin{align*}
    \|p(I - D^{-1}A)\| & \leq \|\tilde{q}(I - A_{22}^{-1}S_{22})\| \left(\|A_{22}^{-1}A_{21}\| + \|A_{11}^{-1}A_{12}\|\right).
  \end{align*}
  
  Let $\varphi_{11}^{(d-1)}(t)$ and $\varphi_{22}^{(d-1)}(t)$ denote
  the consistent worst-case minimizing polynomials of degree $d-1$ for
  $A_{11}^{-1}S_{11}$ and $A_{22}^{-1}S_{22}$, respectively. Note,
  $\tilde{q}$ is also a polynomial of degree $d-1$ in (without loss of
  generality) $A_{11}^{-1}S_{11}$. Because coefficients of $\tilde{q}$
  satisfy $\sum_i \tilde{\alpha}_i = 1$,
  $\tilde{q}(I - A_{11}^{-1}S_{11})$ can equivalently be expressed as
  a consistent polynomial in $(A_{11}^{-1}S_{11})$. Thus let
  $\tilde{q}(I-A_{11}^{-1}S_{11}) :=
  \varphi_{11}^{(d-1)}(A_{11}^{-1}S_{11})$.  Analogous steps for
  $A_{22}^{-1}S_{22}$ yield bounds
    \begin{align*}
    \|\varphi^{(2d)}(D^{-1}A)\| & \leq \|\varphi_{11}^{(d-1)}(A_{11}^{-1}S_{11})\| \left(\|A_{22}^{-1}A_{21}\| + \|A_{11}^{-1}A_{12}\|\right), \\
    \|\varphi^{(2d)}(D^{-1}A)\| & \leq \|\varphi_{22}^{(d-1)}(A_{22}^{-1}S_{22})\| \left(\|A_{22}^{-1}A_{21}\| + \|A_{11}^{-1}A_{12}\|\right).
  \end{align*}
  
  Similar to the proof of a lower bound, an analogous derivation as
  above yields right preconditioning bounds
    \begin{align*}
    \|\varphi^{(2d)}(AD^{-1})\| & \leq \|\varphi_{11}^{(d-1)}(S_{11}A_{11}^{-1})\| \left(\|A_{12}A_{22}^{-1}\| + \|A_{21}A_{11}^{-1}\|\right), \\
    \|\varphi^{(2d)}(AD^{-1})\| & \leq \|\varphi_{22}^{(d-1)}(S_{22}A_{22}^{-1})\| \left(\|A_{12}A_{22}^{-1}\| + \|A_{21}A_{11}^{-1}\|\right),
  \end{align*}
    where $\varphi$ now denotes minimizing polynomials associated with
  right preconditioning.
\end{proof}

{\color{black}
\begin{remark}[General block-diagonal preconditioner]
  This section proved results for block-Jacobi
  preconditioners, where the preconditioner inverts the diagonal blocks
  of the original matrix, and convergence is defined by the underlying
  preconditioned Schur complement. However, such results do not extend
  to more general block-diagonal preconditioners with Schur-complement
  approximation $\widehat{S}_{22}\neq A_{22}$. In \cite{diag}, examples
  are constructed where block-diagonal preconditioning with an exact
  Schur complement take several hundred iterations to converge,
  while block-triangular preconditioning with an exact Schur complement
  requires only three iterations (the extra iteration over a theoretical
  max of two is likely due to floating point error).
\end{remark}
}

\section{The steady linearized Navier--Stokes equations}
\label{sec:results}

To demonstrate the new theory in practice, we consider a
finite-element discretization of the steady linearized Navier--Stokes
equations, which results in a nonsymmetric operator with block
structure, to which we apply various block-preconditioning
techniques. The finite-element discretization is constructed using the
MFEM finite-element library \cite{mfem-library}, PETSc is used for the
block-preconditioning and linear-algebra interface
\cite{petsc-user-ref}, and \textit{hypre} provides the
algebraic multigrid (AMG) solvers for various blocks in the operator
\cite{Falgout:2002vu}.

Let $\Omega \subset \mathbb{R}^2$ be a polygonal domain with boundary
$\partial\Omega$. We consider the steady linearized Navier--Stokes
problem for the velocity field $u:\Omega \to \mathbb{R}^2$ and
pressure field $p:\Omega \to \mathbb{R}$, given by
\begin{subequations}
  \begin{align}
    \label{eq:Oseen_a}
    -\nu\Delta u + \nabla\cdot(w \otimes u) - \gamma\nabla\nabla\cdot u + \nabla p
    &= f && \text{in } \Omega,
    \\
    \label{eq:Oseen_b}
    \nabla \cdot u &= 0 && \text{in } \Omega,
    \\
    \label{eq:Oseen_c}
    u &= g && \text{on } \partial\Omega,
  \end{align}
  \label{eq:Oseen}
\end{subequations}
where $w:\Omega \to \mathbb{R}^2$ is a given solenoidal velocity
field, $\nu \in \mathbb{R}^+$ is the kinematic viscosity,
$\gamma \ge 0$ is a constant, $g \in \mathbb{R}^2$ is a given Dirichlet
boundary condition, and $f : \Omega \to \mathbb{R}^d$ is a forcing
term. The consistent grad-div term $-\gamma\nabla\nabla\cdot u$ is
added {\color{black}to \cref{eq:Oseen_a}} to improve convergence of the
iterative solver when solving the discrete form of \cref{eq:Oseen}.

As a test case in this section we set {\color{black}$\gamma=1000$},
$\nu = 10^{-4}$, and $f$ and $g$ are chosen such that the exact
solution is given by
\begin{equation*}
  u =
  \begin{bmatrix}
    \sin(3 x_1)\sin(3 x_2) \\
    \cos(3 x_1)\cos(3 x_2)
  \end{bmatrix},
  \qquad
  p = (1-3x_1)x_2, \quad\quad \text{in } \Omega = [0,1]^2,
\end{equation*}
{\color{black}with $w = u$.}

We discretize the linearized Navier--Stokes problem \cref{eq:Oseen}
using the pointwise mass-conserving hybridizable discontinuous
Galerkin (HDG) method introduced in \cite{Rhebergen:2018a}. This HDG
method approximates both the velocity and pressure separately on
element interiors and element boundaries. As such, we make a
distinction between interior element degrees-of-freedom (DOFs) and the
facet DOFs. Separating the interior DOFs and facet DOFs, the HDG
linear system takes the form
\begin{equation}
  \label{eq:linearsystem}
  \begin{bmatrix}
    A_{uu}        & A_{u\bar{u}}       & B_{pu}^T & B_{\bar{p}u}^T \\
    A_{\bar{u}u}   & A_{\bar{u}\bar{u}} & 0        & 0 \\
    B_{pu}         & 0                  & 0        & 0 \\
    B_{\bar{p}u}   & 0                  & 0        & 0
  \end{bmatrix}
  \begin{bmatrix}
    u \\ \bar{u} \\ p \\ \bar{p}
  \end{bmatrix}
  =
  \begin{bmatrix}
    L \\ 0 \\ 0 \\ 0
  \end{bmatrix},
\end{equation}
where $u$ corresponds to the velocity field DOFs inside the elements,
$\bar{u}$ corresponds to the velocity DOFs on facets, and likewise for
$p$ and $\bar{p}$. The HDG method in \cite{Rhebergen:2018a} is such
that element DOFs are local; a direct consequence is that $A_{uu}$ is
a block diagonal matrix. Using this, we eliminate $u$ from
\cref{eq:linearsystem} and get the statically-condensed system
\begin{equation}
  \label{eq:reduced}
  \begin{bmatrix}
    A_{\bar{u}\bar{u}}-A_{\bar{u}u}A^{-1}_{uu}A_{u\bar{u}} & -A_{\bar{u}u}A^{-1}_{uu}B_{pu}^T & -A_{\bar{u}u}A^{-1}_{uu}B_{\bar{p}u}^T \\
    -B_{pu}A^{-1}_{uu}A_{u\bar{u}} & -B_{pu}A^{-1}_{uu}B_{pu}^T & -B_{pu}A^{-1}_{uu}B_{\bar{p}u}^T \\
    -B_{\bar{p}u}A^{-1}_{uu}A_{u\bar{u}} & -B_{\bar{p}u}A^{-1}_{uu}B_{pu}^T & -B_{\bar{p}u}A^{-1}_{uu}B_{\bar{p}u}^T
  \end{bmatrix}
  \begin{bmatrix}
    \bar{u} \\
    p \\
    \bar{p}
  \end{bmatrix}
  =
  \begin{bmatrix}
    - A_{\bar{u}u}A^{-1}_{uu}L \\
    - B_{pu}A^{-1}_{uu}L \\
    - B_{\bar{p}u}A^{-1}_{uu}L
  \end{bmatrix}.
\end{equation}
In this section we verify the theory developed in \cref{sec:2x2}
and \cref{sec:min} by solving the statically-condensed block system
\cref{eq:reduced}. Note that this is a $3\times 3$ block
system while the theory developed in this paper is for a $2 \times 2$
block system \cref{eq:system}--\cref{eq:mat}. For this reason, we lump
together the pressure DOFs $p$ and $\bar{p}$ and write
\cref{eq:reduced} in the form \cref{eq:system}--\cref{eq:mat} with
\begin{align}
  \label{eq:hdg_A11_A22}
\begin{split}
  A_{11} &= A_{\bar{u}\bar{u}}-A_{\bar{u}u}A^{-1}_{uu}A_{u\bar{u}},
  \hspace{5.75ex}
  A_{12} = 
  \begin{bmatrix}
    -A_{\bar{u}u}A^{-1}_{uu}B_{pu}^T & -A_{\bar{u}u}A^{-1}_{uu}B_{\bar{p}u}^T
  \end{bmatrix},
  \\
    A_{21} &=
  \begin{bmatrix}
    -B_{pu}A^{-1}_{uu}A_{u\bar{u}} \\ -B_{\bar{p}u}A^{-1}_{uu}A_{u\bar{u}}
  \end{bmatrix},
  \hspace{8ex}
  A_{22} =
  \begin{bmatrix}
    -B_{pu}A^{-1}_{uu}B_{pu}^T & -B_{pu}A^{-1}_{uu}B_{\bar{p}u}^T \\
    -B_{\bar{p}u}A^{-1}_{uu}B_{pu}^T & -B_{\bar{p}u}A^{-1}_{uu}B_{\bar{p}u}^T    
  \end{bmatrix},
  \end{split}
\end{align}
and
\begin{equation*}
  \mathbf{x} =
  \begin{bmatrix}
    \bar{u} \\ P
  \end{bmatrix},
  \quad
  P =
  \begin{bmatrix}
    p \\ \bar{p}
  \end{bmatrix},
  \quad
  \mathbf{b} =
  \begin{bmatrix}
    - A_{\bar{u}u}A^{-1}_{uu}L \\
    \mathbf{b}_p
  \end{bmatrix},
  \quad
  \mathbf{b}_p =
  \begin{bmatrix}
    - B_{pu}A^{-1}_{uu}L \\
    - B_{\bar{p}u}A^{-1}_{uu}L    
  \end{bmatrix}.
\end{equation*}

\subsection{Block preconditioning}
\label{sec:results:prec}

We consider block upper- and lower-triangular (respectively, $U_{22}$
and $L_{22}$), block-diagonal ($D_{22}$), approximate block-LDU
($M_{22}$), and both versions of symmetric block-triangular
preconditioners discussed in \cref{sec:2x2:fp:symm}. In all cases the
Schur complement $S_{22}$ of \cref{eq:mat} is approximated by
\begin{equation}
  \label{eq:S22_HDG}
  \widehat{S}_{22} =
  \begin{bmatrix}
    -B_{pu}A_{uu}^{-1}B_{pu}^T & \mathbf{0} \\
    \mathbf{0} & -B_{\bar{p}u}A_{uu}^{-1}B_{\bar{p}u}^T    
  \end{bmatrix}.
\end{equation}
In \cite{sivas:graddiv2020} we show that $\widehat{S}_{22}$ is a good
approximation to the corresponding Schur complement $S_{22}$. Note
that the diagonal block on $p$, $-B_{pu}A_{uu}^{-1}B_{pu}^T$, is block
diagonal and can be inverted directly. Furthermore, for
large $\gamma$ the diagonal block on $\bar{p}$,
$-B_{\bar{p}u}A_{uu}^{-1}B_{\bar{p}u}^T$, is a Poisson-like operator
which can be inverted rapidly using classical AMG techniques. Finally,
the momentum block $A_{11}$ in all preconditioners is an approximation
to an advection-diffusion equation. To this block we apply the
nonsymmetric AMG solver based on approximate ideal restriction (AIR)
\cite{AIR2,AIR1}, a recently developed nonsymmetric AMG method that is
most effective on advection-dominated problems. Altogether, we have
fast, scalable solvers for the diagonal blocks in the different
preconditioners.

Theory in this paper proves that convergence of Krylov methods applied
to the block-preconditioned system is governed by an equivalent Krylov
method applied to the preconditioned Schur complement. Since
$\widehat{S}_{22}$ is a good approximation to the corresponding Schur
complement $S_{22}$, we consider block preconditioners based on the
diagonal blocks $\{A_{11}, \widehat{S}_{22}\}$:
\begin{enumerate}
\item An (approximate) inverse of the momentum block $A_{11}$ using
  AIR and a block-diagonal inverse of the pressure block for
  $\widehat{S}_{22}$.
\item An (approximate) inverse of the momentum block $A_{11}$ using
  AIR and a negative block-diagonal inverse of the pressure block for
  $\widehat{S}_{22}$.
\end{enumerate}

The diagonal inverses computed in the pressure block
$\widehat{S}_{22}$ are solved to a small tolerance. The sign is swapped
on the pressure block, a technique often used with symmetric systems
to maintain an SPD preconditioner, to study the effect of sign of
$\widehat{S}_{22}^{-1}$ on convergence.

Although theory developed here is based on an exact inverse of the
momentum block, we present results ranging from an exact inverse to a
fairly crude inverse, with a reduction in relative residual of only
$0.1$ per iteration, and demonstrate that theoretical results extend
well to the case of inexact inverses in practice. Although AIR has
proven an effective solver for advection-dominated problems, solving
the momentum block can still be challenging. For this
reason, a relative-residual tolerance for the momentum block is used
(as opposed to doing a fixed number of iterations of AIR) as it is not
clear a priori how many iterations would be appropriate. Since this
results in a preconditioner that is different each iteration, we use
{\color{black} FGMRES acceleration (which uses right preconditioning
by definition)} \cite{saad1993flexible}. This is used as a practical
choice, and we demonstrate that the performance of
FGMRES is also consistent with theory.

\subsection{Results}\label{sec:results:results}

\Cref{fig:NS} shows iterations to various global relative-residual
tolerances as a function of relative-residual tolerance of the
momentum block for block upper- and lower-triangular, block-diagonal,
approximate block-LDU, and both versions of symmetric block triangular
preconditioners. In general, theory derived in this paper based on the
assumption of an exact inverse of one diagonal block extends
well to the inexact setting. Further points to take away from
\cref{fig:NS} are:

\begin{figure}[!ht]
  \centering
  \begin{center}
    \begin{subfigure}[t]{0.475\textwidth}
      \includegraphics[width=\textwidth]{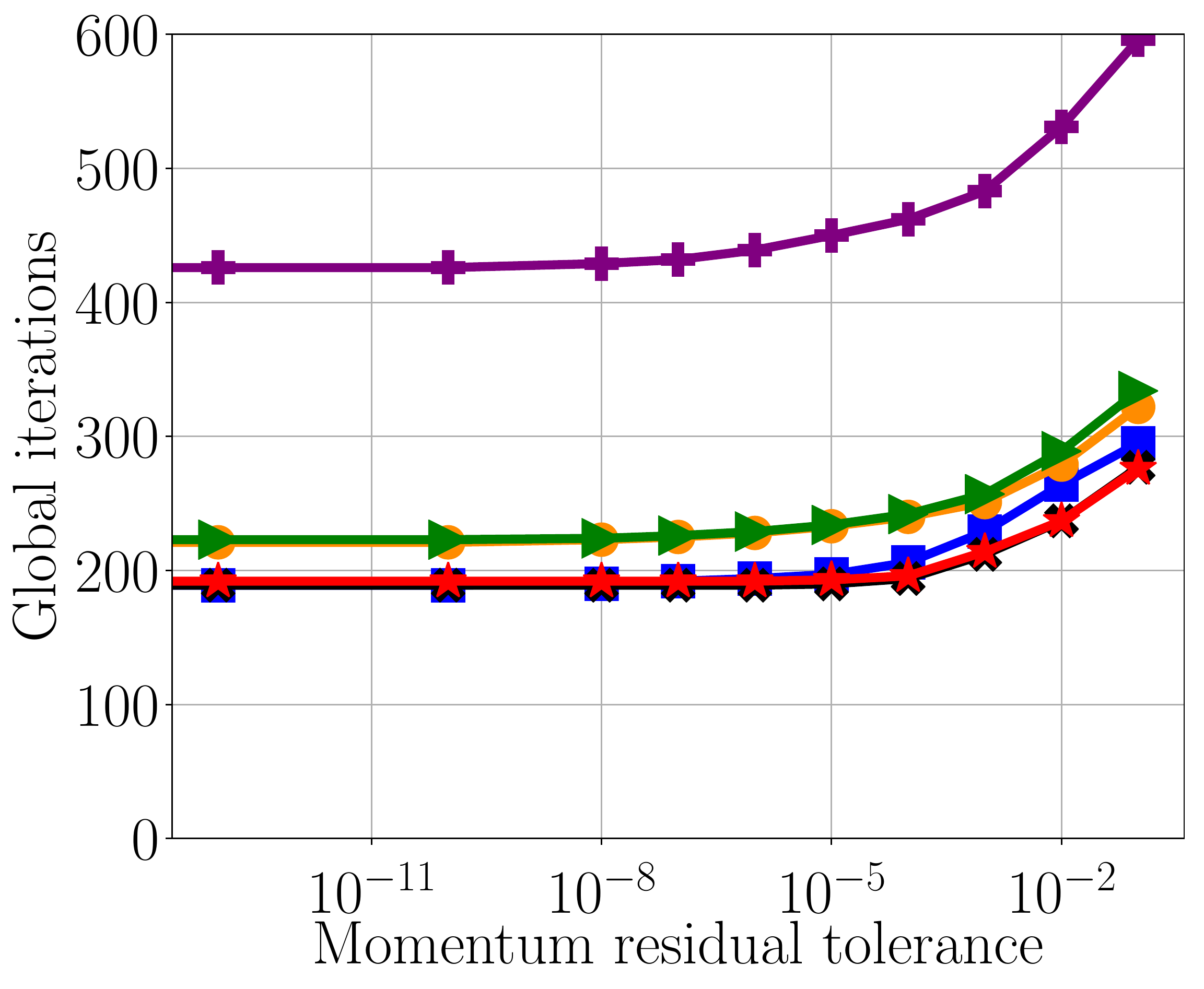}
      \caption{$10^{-11}$ relative residual tolerance.}
      \label{fig:11}
    \end{subfigure}\hspace{2ex}
    \begin{subfigure}[t]{0.47\textwidth}
      \includegraphics[width=\textwidth]{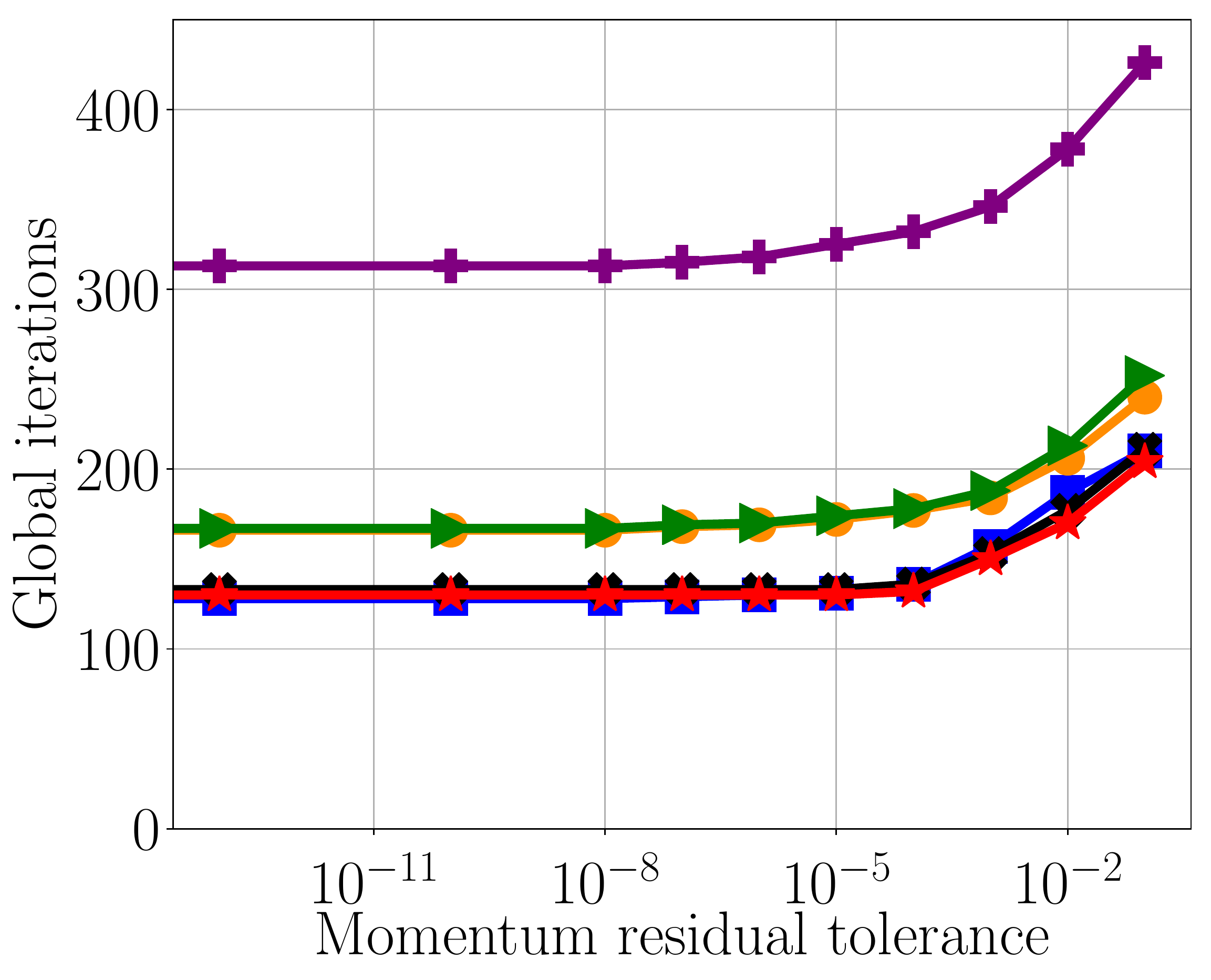}
      \caption{$10^{-8}$ relative residual tolerance.}
      \label{fig:8}
    \end{subfigure}
    \\
    \begin{subfigure}[t]{0.475\textwidth}
      \includegraphics[width=\textwidth]{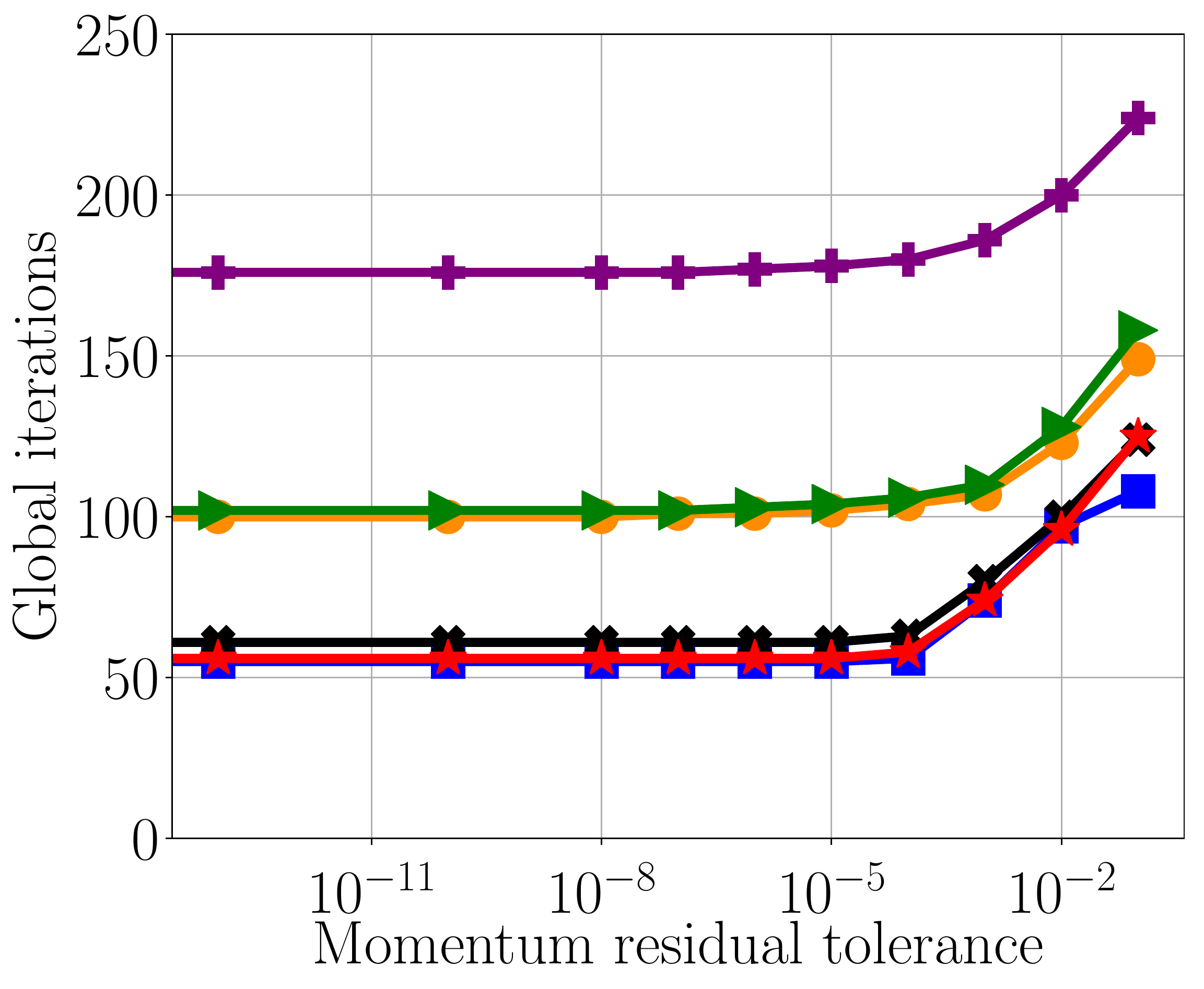}
      \caption{$10^{-5}$ relative residual tolerance.}
      \label{fig:5}
    \end{subfigure}\hspace{2ex}
    \begin{subfigure}[t]{0.47\textwidth}
      \includegraphics[width=\textwidth]{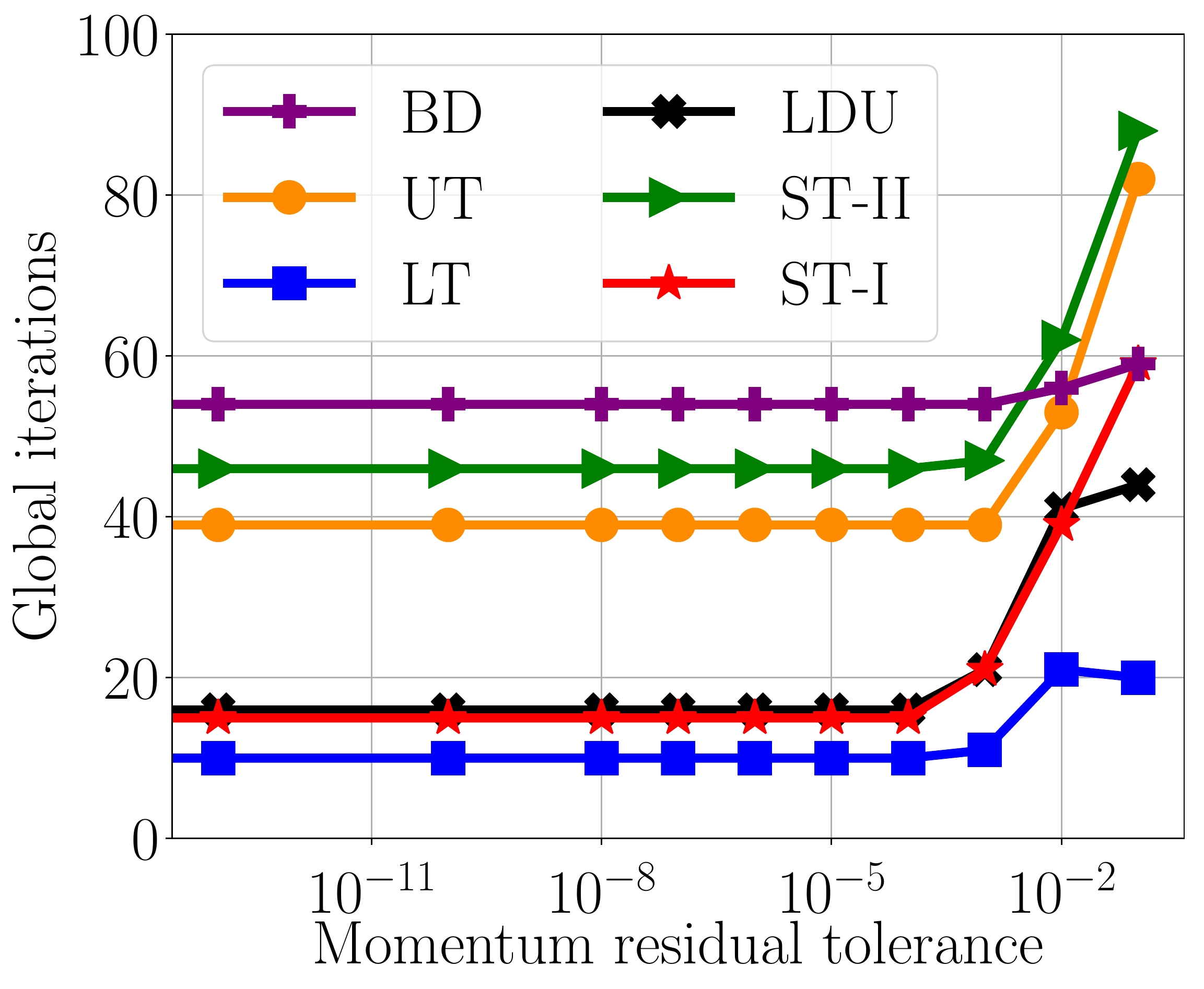}
      \caption{$10^{-3}$ relative residual tolerance.}
      \label{fig:3}
    \end{subfigure}
  \end{center}
  \caption{Number of iterations for the $2\times 2$ block
    preconditioned system to converge to $10^{-11}$, $10^{-8}$,
    $10^{-5}$, and $10^{-3}$ relative residual tolerance, as a
    function of the relative residual tolerance to solve the momentum
    block. Results are shown for block lower-triangular (LT), block
    upper-triangular (UT), symmetric lower-then-upper block-triangular
    (ST-I), symmetric upper-then-lower block-triangular (ST-II),
    block-diagonal (BD), and approximate block-LDU (LDU)
    preconditioners.}
  \label{fig:NS}
\end{figure}

\begin{enumerate}
\item For four different relative-residual tolerances of the
  $2\times 2$ block system, block-diagonal preconditioning takes very
  close to twice as many iterations as block-triangular
  preconditioning. For larger tolerances such as $10^{-3}$, it is
  approximately twice the average number of iterations of block upper-
  and block lower-triangular preconditioning, which is consistent with
  the derivations and constants in \Cref{th:jacobi}. Moreover, this relationship
  holds for almost all tolerances of the momentum block solve, with
  the exception of considering both large momentum tolerances ($>10^{-3}$)
  and large global tolerances (see \cref{fig:3}).

\item At no point does a symmetric block-triangular or approximate
  block-LDU preconditioner offer improved convergence over a
  block-triangular preconditioner, regardless of momentum or
  $2\times 2$ system residual tolerance, although the solve times are
  significantly longer due to the additional applications of the
  diagonal blocks of the preconditioner. In fact, for a global
  tolerance of $10^{-3}$ symmetric block-triangular preconditioning is
  actually less effective than just block triangular.

\item The block lower-triangular preconditioner is more effective than
  the block-upper-triangular preconditioner. However, they differ in
  iteration count by roughly the same 30--40 iterations for all four
  tolerances tested, indicating it is not a difference in convergence
  rate (which the theory says it should not be), but rather a
  difference in the leading constants. Interestingly, it cannot be
  explained by the norm of off-diagonal blocks (which are similar for
  upper- and lower-triangular preconditioning in this case). We
  hypothesize it is due to the initial residual, where for
  $\mathbf{r}^{(0)} = [\mathbf{r}_1^{(0)}, \mathbf{r}_2^{(0)}]$, we
  have $\|\mathbf{r}_1^{(0)}\| = 19.14$ and
  $\|\mathbf{r}_2^{(0)}\| = 0.0063$. Heuristically, it seems more
  effective in terms of convergence to solve directly on the block
  with the large initial residual (in this case the (1,1)-block) and
  lag the variable with a small initial residual (in this case the
  (2,2)-block), which would correspond to block-lower triangular
  preconditioning. However, a better understanding of upper vs. lower
  block-triangular preconditioning is ongoing work.
\end{enumerate}

A common approach for saddle-point problems {that are}
self-adjoint in a given inner product is to use an SPD preconditioner
so that three-term recursion formulae, in particular preconditioned
MINRES, can be used. For matrices with saddle-point structure, the
Schur complement is often negative definite, so this is achieved by
preconditioning $S_{22}$ with some approximation
$-\widehat{S}_{22}^{-1}$. Although this is advantageous in terms of
being able to use MINRES, convergence can suffer
compared with GMRES and an indefinite preconditioner.

Results of this paper indicate a
direct correlation between the minimizing polynomial for the
$2\times 2$ system and the preconditioned Schur complement. 
Moreover, convergence on the preconditioned Schur
complement should be independent of sign, because Krylov methods
minimize over a Krylov space that is invariant to the sign of
$M^{-1}$. Together, this indicates that if the (1,1)-block is inverted
exactly, convergence of GMRES applied to the $2\times2$ preconditioned
system should be approximately equivalent, regardless of sign of the
Schur-complement preconditioner.

\Cref{fig:NSm} demonstrates this property, considering {\color{black} FGMRES}
iterations on the $2\times 2$ system to relative-residual tolerances of
$10^{-11}$ and $10^{-5}$, as a function of momentum relative-residual
tolerance. Results are shown for block-diagonal, block
lower-triangular, and block-upper-triangular preconditioners, with a
natural sign $\widehat{S}_{22}^{-1}$ (solid lines) and swapped sign
$-\widehat{S}_{22}^{-1}$ (dotted lines). For accurate solves of the
momentum block, we see relatively tight convergence behaviour between
$\pm \widehat{S}_{22}^{-1}$. As the momentum solve tolerance is
relaxed, convergence of block-triangular preconditioners decay for
$-\widehat{S}_{22}^{-1}$. Interestingly, the same phenomenon does not
appear to happen for block-diagonal preconditioners, and rather there
is a fixed difference in iteration count between
$\pm \widehat{S}_{22}^{-1}$. This is likely because a block-diagonal
preconditioner does not directly couple the variables of the
$2\times 2$ matrix, while the block-triangular preconditioner does. An
inexact inverse loses a nice cancellation property of the exact
inverse, and the triangular coupling introduces terms along the lines
of $I \pm \widehat{S}_{22}^{-1}A_{22}$ {(see
  \cref{eq:errorrespropops})}, which clearly depend on the sign of
$\widehat{S}_{22}^{-1}$.

\begin{figure}[!ht]
    \centering
    \begin{center}
        \begin{subfigure}[t]{0.475\textwidth}
            \includegraphics[width=\textwidth]{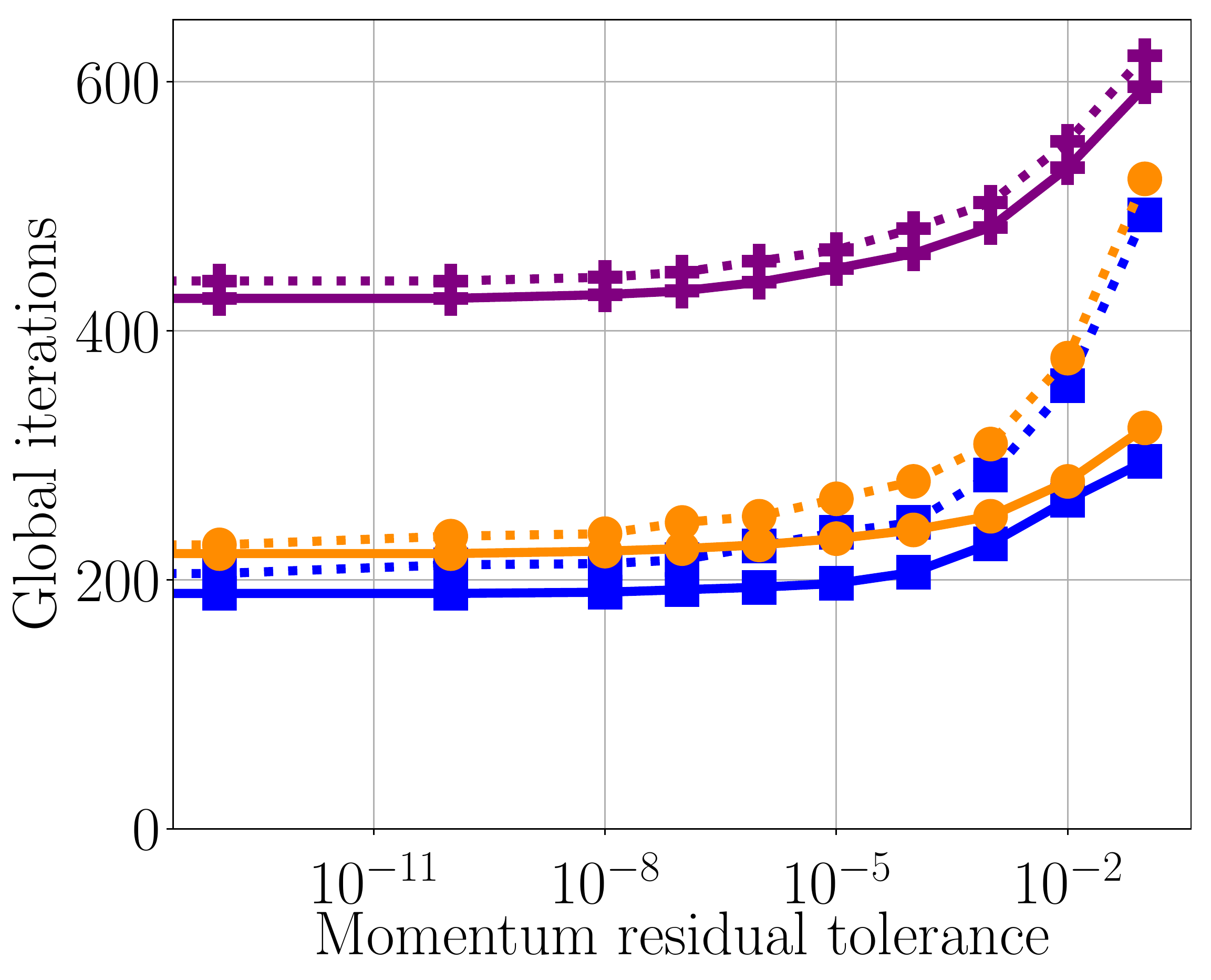}
	\caption{$10^{-11}$ relative residual tolerance.}
	\label{fig:11m}
        \end{subfigure}
        \begin{subfigure}[t]{0.475\textwidth}
            \includegraphics[width=\textwidth]{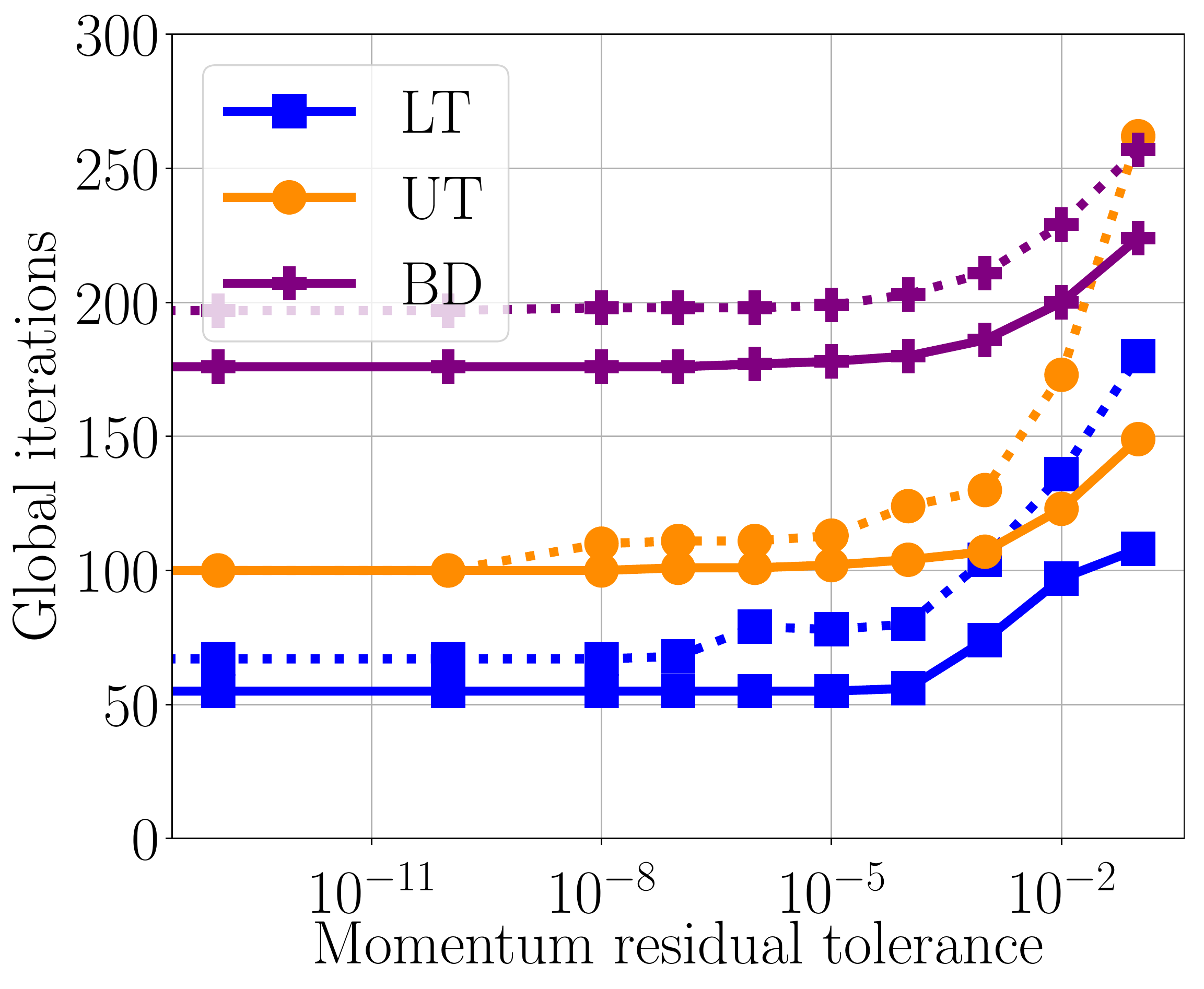}
	\caption{$10^{-5}$ relative residual tolerance.}
	\label{fig:5m}
        \end{subfigure}
    \end{center}
    \caption{Number of iterations for the $2\times 2$ block preconditioned system to converge
    to $10^{-11}$ and $10^{-5}$ relative residual tolerance, as a function
    of the relative residual tolerance to solve the momentum block. Results are shown for
    block lower-triangular (LT), block upper-triangular (UT), and block-diagonal (BD)
    preconditioners, as in \cref{fig:NS} (solid lines) and with the sign swapped 
    on the pressure Schur-complement approximation (dotted lines).}
    \label{fig:NSm}
\end{figure}

In \cite{Fischer:1998vj} it is proven that minimal residual methods
applied to saddle-point problems with a zero (2,2)-block and
preconditioned with a block-diagonal preconditioner observe a
staircasing effect, where every alternate iteration stalls. This results
in approximately twice as many iterations to convergence as a similar
block-triangular preconditioner. Although the proof appeals to
specific starting vectors, the effect is demonstrated in practice as
well. \Cref{th:jacobi} proved block-diagonal preconditioning is
expected to take twice as many iterations as block-triangular
preconditioning to reach a given tolerance (within some constant
multiplier). \Cref{fig:CF} looks at the GMRES convergence factor as a
function of iteration for block-diagonal preconditioning and block
lower-triangular preconditioning, with $\pm
\widehat{S}_{22}^{-1}$. Interestingly, with $-\widehat{S}_{22}^{-1}$
(see \cref{fig:cfm}), the staircasing effect is clear, where every
alternate iteration makes little to no reduction in residual. Although
convergence has some sawtooth character for block-diagonal with
$\widehat{S}_{22}^{-1}$ as well, it is much weaker, and the
staircasing effect is not truly observed. It is possible this explains
the slightly better convergence obtained with $\widehat{S}_{22}^{-1}$
in \cref{fig:5m}, regardless of momentum relative-residual tolerance.

\begin{figure}[!ht]
    \centering
    \begin{center}
        \begin{subfigure}[t]{0.475\textwidth}
            \includegraphics[width=\textwidth]{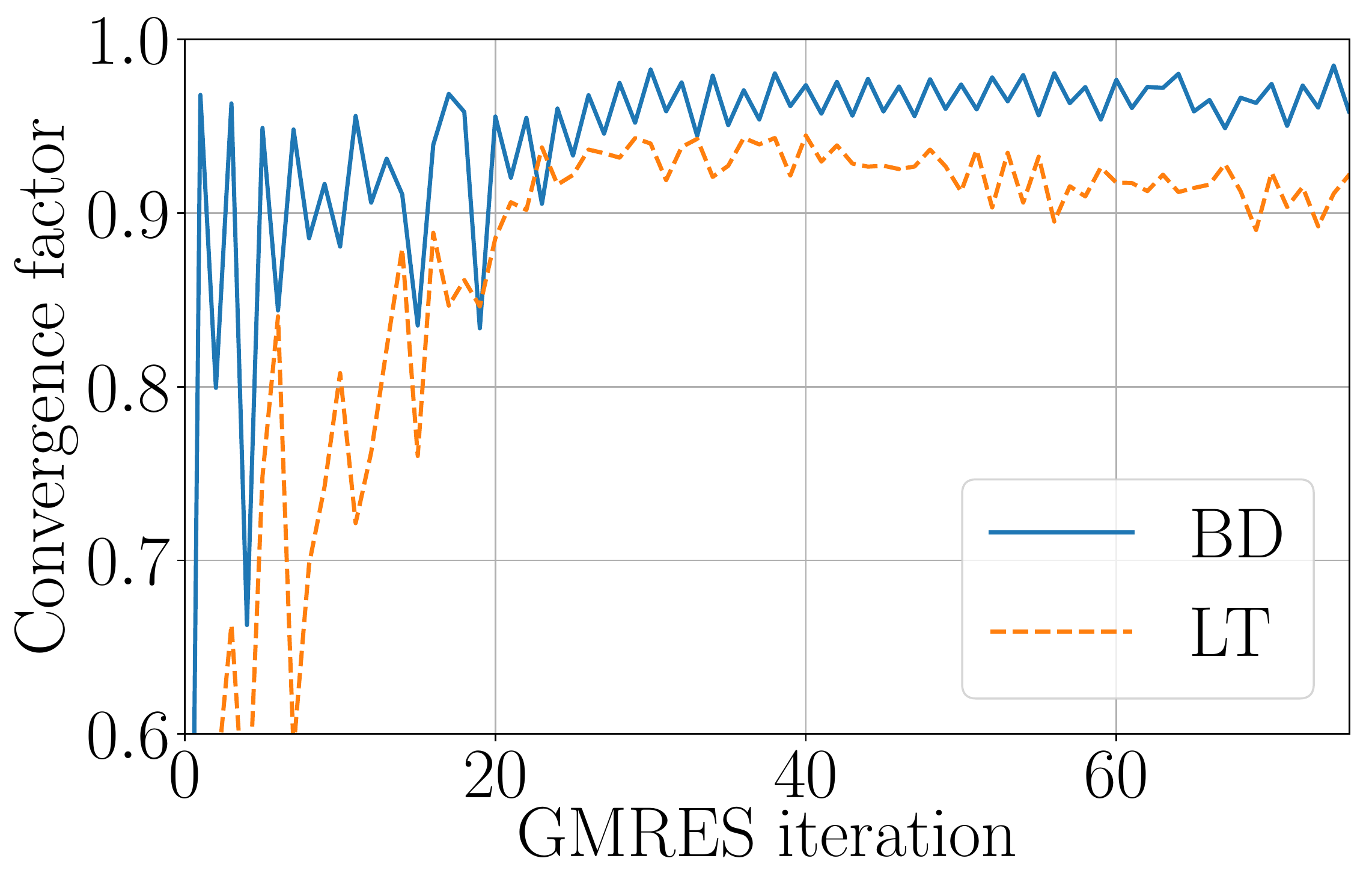}
	\caption{Preconditioning using $\widehat{S}_{22}^{-1}$.}
	\label{fig:cfp}
        \end{subfigure}
        \begin{subfigure}[t]{0.475\textwidth}
            \includegraphics[width=\textwidth]{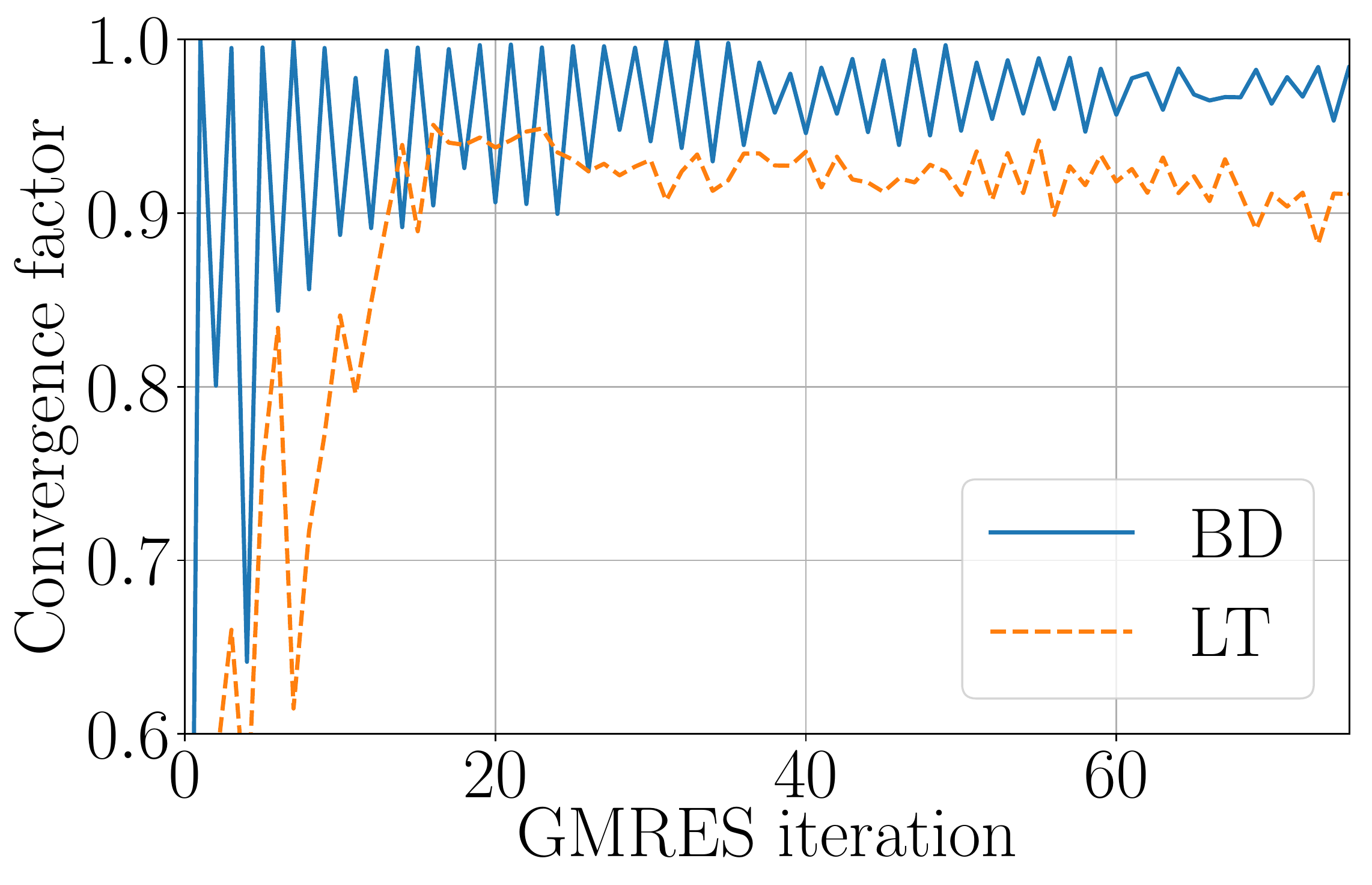}
	\caption{Preconditioning using $-\widehat{S}_{22}^{-1}$.}
	\label{fig:cfm}
        \end{subfigure}
    \end{center}
    \caption{Convergence factor as a function of {\color{black} FGMRES} iteration for block-diagonal (BD) and
    block lower-triangular (LT) preconditioning. \Cref{fig:cfp} uses the natural sign on $\widehat{S}_{22}^{-1}$,
    while \cref{fig:cfm} adds a negative to $\widehat{S}_{22}^{-1}$.}
    \label{fig:CF}
\end{figure}

\section{Conclusions}
\label{sec:conclusions}

This paper analyzes the relationship between Krylov methods with
$2\times 2$ block preconditioners and the underlying preconditioned
Schur complement. Under the assumption that one of the diagonal blocks
is inverted exactly, we prove a direct relationship between the
minimizing Krylov polynomial of a given degree for the two systems,
thereby proving their equivalence and the fact that an effective Schur
complement preconditioner is a necessary and sufficient condition for
an effective $2\times 2$ block preconditioner. Theoretical results
give further insight into choice of block preconditioner, including
that (i) symmetric block-triangular and approximate block-LDU
preconditioners offer a minimal reduction in iteration count over
block-triangular preconditioners, at the expense of additional
computational cost, and (ii) block-diagonal preconditioners take about
twice as many iterations to reach a given residual tolerance as
block-triangular preconditioners.

Numerical results on an HDG discretization of the steady linearized
Navier--Stokes equations confirm the theoretical contributions, and
show that the practical implications extend to the case of a
Schur-complement approximation coupled with an inexact inverse of the
other diagonal block.
{\color{black}
Although not shown here, it is worth pointing out we have observed similar
results with inexact block preconditioners in other applications. For HDG
discretizations of symmetric Stokes and Darcy problems, if the pressure Schur
complement is approximated by a spectrally equivalent operator, applying two
to four multigrid cycles to the momentum block yields comparable convergence
on the larger $2\times 2$ system as applying a direct solve on the momentum block.
Classical source iteration and DSA preconditioning for S$_N$ discretizations of
neutron transport can also be posed as a $2\times 2$ block preconditioning
\cite{19hetdsa}. There we have also observed that when applying AMG iterations
to the (1,1)-block and Schur complement approximation, only 2-3 digits of
residual reduction yields convergence on the larger $2\times 2$ system in a
similar number of iterations as applying direct solves to each block. In
each of these cases, convergence of minimal residual methods applied to the
$2\times 2$ system is defined by the preconditioning of the Schur complement. 
}

\appendix
\section{Proof of \Cref{th:ldu_cg}}
\label{ap:proof_ldu_cg}

\begin{proof}[Proof of \Cref{th:ldu_cg}]
  
  Here we prove the case of $k=2$. An analogous derivation appealing
  to \cref{eq:LDU11} yields equivalent results for $k=1$.

  Recall that CG forms a minimizing consistent polynomial of
  $M_{22}^{-1}A$ in the $A$-norm.  Let $\varphi^{(d)}$ be the
  minimizing polynomial of degree $d$ in $M_{22}^{-1}A$ for error
  vector $\mathbf{e}$ in the $A$-norm. Then, expressing $A$ in a block
  LDU sense to simplify the term $A\varphi^{(d+1)}(M_{22}^{-1}A)$,
  we immediately obtain a lower bound:
    \begin{align*} 
    \|\varphi^{(d)}(M_{22}^{-1}A)\mathbf{e}\|_A^2 & = \langle A\varphi^{(d)}(M_{22}^{-1}A)\mathbf{e}, \varphi^{(d)}(M_{22}^{-1}A)\mathbf{e}\rangle \\
                                                  &\hspace{-12ex} = \left\langle \begin{bmatrix}A_{11} & \mathbf{0} \\ \mathbf{0} & S_{22}\end{bmatrix}
                                                                                                                                    \begin{bmatrix} \varphi^{(d)}(I)(\mathbf{e}_1 + A_{11}^{-1}A_{12}\mathbf{e}_2) \\ \varphi^{(d)}(\widehat{S}_{22}^{-1}S_{22})\mathbf{e}_2\end{bmatrix},
    \begin{bmatrix} \varphi^{(d)}(I)(\mathbf{e}_1 + A_{11}^{-1}A_{12}\mathbf{e}_2) \\ \varphi^{(d)}(\widehat{S}_{22}^{-1}S_{22})\mathbf{e}_2t\end{bmatrix}
    \begin{bmatrix} \mathbf{e}_1\\\mathbf{e}_2\end{bmatrix}\right\rangle \\
                                                  &\hspace{-12ex} \geq \langle S_{22} \varphi^{(d)}(\widehat{S}_{22}^{-1}S_{22})\mathbf{e}_2, \varphi^{(d)}(\widehat{S}_{22}^{-1}S_{22})\mathbf{e}_2 \rangle \\
                                                  &\hspace{-12ex} = \|\varphi^{(d)}(\widehat{S}_{22}^{-1}S_{22})\mathbf{e}_2\|_{S_{22}}^2 \\
                                                  &\hspace{-12ex} = \|\varphi_{22}^{(d)}(\widehat{S}_{22}^{-1}S_{22})\mathbf{e}_2\|_{S_{22}}^2,
  \end{align*}
    where $\varphi_{22}^{(d)}$ is the minimizing polynomial of degree
  $d$ in $\widehat{S}_{22}^{-1}S_{22}$ for error vector
  $\mathbf{e}_2$.  A lower bound on the minimizing polynomial of
  degree $d$ in norm follows immediately by noting that
    \begin{align*}
    \|\varphi^{(d)}(M_{22}^{-1}A)\|_A &= \sup_{\mathbf{e} \neq\mathbf{0}} \frac{\|\varphi^{(d)}(M_{22}^{-1}A)\mathbf{e}\|_A}{\|\mathbf{e}\|_A} 
                                        \geq \sup_{\substack{\mathbf{e}_2 \neq\mathbf{0},\\\mathbf{e}_1=-A_{11}^{-1}A_{12}}} \frac{\|\varphi^{(d)}(M_{22}^{-1}A)\mathbf{e}\|_A}{\|\mathbf{e}\|_A} \\
                                      &\hspace{3ex} = \|\varphi^{(d)}(\widehat{S}_{22}^{-1}S_{22})\|_{S_{22}} \geq \|\varphi_{22}^{(d)}(\widehat{S}_{22}^{-1}S_{22})\|_{S_{22}}.
  \end{align*}
  
  For an upper bound, let $\varphi_{22}^{(d)}$ be the minimizing
  polynomial of degree $d$ in $\widehat{S}_{22}^{-1}S_{22}$ for error
  vector $\mathbf{e}_2$ in the $S_{22}$-norm. Define the degree $d+1$
  polynomial $q(t):= (1-t)\varphi^{(d)}_{22}(t)$, and let
  $\varphi^{(d+1)}$ be the minimizing polynomial of degree $d+1$ in
  $M_{22}^{-1}A$ for error vector $\mathbf{e}$ in the $A$-norm.  Then
\begin{align*}
\|\varphi^{(d+1)}(M_{22}^{-1}A)\mathbf{e}\|_A^2 & \leq \|q(M_{22}^{-1}A)\mathbf{e}\|_A^2 \\
& = \left\langle \begin{bmatrix}A_{11} & \mathbf{0} \\ \mathbf{0} & S_{22}\end{bmatrix}
	\begin{bmatrix} \mathbf{0} & \mathbf{0} \\ \mathbf{0} & q(\widehat{S}_{22}^{-1}S_{22})\end{bmatrix}
	\begin{bmatrix} \mathbf{e}_1\\\mathbf{e}_2\end{bmatrix},
	\begin{bmatrix} \mathbf{0} & \mathbf{0} \\ \mathbf{0} & q(\widehat{S}_{22}^{-1}S_{22})\end{bmatrix}
	\begin{bmatrix} \mathbf{e}_1\\\mathbf{e}_2\end{bmatrix}\right\rangle \\
& = \langle S_{22} q(\widehat{S}_{22}^{-1}S_{22})\mathbf{e}_2, q(\widehat{S}_{22}^{-1}S_{22})\mathbf{e}_2 \rangle \\
& = \|(I - \widehat{S}_{22}^{-1}S_{22})\varphi_{22}^{(d)}(\widehat{S}_{22}^{-1}S_{22})\mathbf{e}_2\|_{S_{22}}^2.
\end{align*}
For a bound in norm, note that for a fixed $\mathbf{e}_2$, $\|\mathbf{e}\|_A^2$ is a quadratic function in $\mathbf{e}_1$,
with minimum obtained at $\mathbf{e}_1 := -A_{11}^{-1}A_{12}$. Then,
\begin{align*}
\|\varphi^{(d+1)}(M_{22}^{-1}A)\|_A^2 & = \sup_{\mathbf{e}\neq\mathbf{0}} \frac{\|\varphi^{(d+1)}(M_{22}^{-1}A)\mathbf{e}\|_A^2}{\|\mathbf{e}\|_A^2}
	\leq \sup_{\mathbf{e}\neq\mathbf{0}} \frac{ \|q(M_{22}^{-1}A)\mathbf{e}\|_A^2}{\|\mathbf{e}\|_A^2} \\
&\hspace{-17ex} = \sup_{\mathbf{e}_2} \frac{\|(I - \widehat{S}_{22}^{-1}S_{22})\varphi_{22}^{(d)}(\widehat{S}_{22}^{-1}S_{22})\mathbf{e}_2\|_{S_{22}}^2}
	{\inf_{\mathbf{e}_1} \|\mathbf{e}\|_A^2} 
= \sup_{\mathbf{e}_2} \frac{\|(I - \widehat{S}_{22}^{-1}S_{22})\varphi_{22}^{(d)}(\widehat{S}_{22}^{-1}S_{22})\mathbf{e}_2\|_{S_{22}}^2}
	{\|\mathbf{e}_2\|_{S_{22}}^2} \\
& \hspace{0ex}= \|(I - \widehat{S}_{22}^{-1}S_{22})\varphi_{22}^{(d)}(\widehat{S}_{22}^{-1}S_{22})\|_{S_{22}}^2.
\end{align*}

\end{proof}

\bibliographystyle{siamplain}
\bibliography{references}
\end{document}